\newtheorem{theorem}{Theorem}[section]
\newtheorem{lemma}[theorem]{Lemma}
\newtheorem{proposition}[theorem]{Proposition}
\newtheorem{corollary}[theorem]{Corollary}
\newtheorem{conjecture}[theorem]{Conjecture}
\theoremstyle{definition}
\theoremstyle{remark}
\newtheorem{remark}[theorem]{Remark}
\theoremstyle{example}
\theoremstyle{note}
\numberwithin{equation}{section}
\DeclareMathOperator{\GL}{GL}
\DeclareMathOperator{\Ker}{Ker}
\DeclareMathOperator{\Image}{Im}
\DeclareMathOperator{\Span}{Span}
\DeclareMathOperator{\ind}{ind}
\DeclareMathOperator{\M}{M}
\DeclareMathOperator{\tr}{Tr}
\DeclareMathOperator{\Rank}{Rank}
\DeclareMathOperator{\Type}{Type}
\begin{document}
\title{On a twisted Jacquet module of $\GL(6)$ over a finite field}
\author{Kumar Balasubramanian} 
\thanks{Research of Kumar Balasubramanian is supported by the SERB grant: MTR/2019/000358.}
\address{Kumar Balasubramanian\\
Department of Mathematics\\
IISER Bhopal\\
Bhopal, Madhya Pradesh 462066, India}
\email{bkumar@iiserb.ac.in}

\author{Himanshi Khurana*} \thanks{* indicates corresponding author}
\address{Himanshi Khurana\\
Department of Mathematics\\
IISER Bhopal\\
Bhopal, Madhya Pradesh 462066, India}
\email{himanshi18@iiserb.ac.in}

\keywords{Cuspidal representations, Twisted Jacquet module}
\subjclass{Primary: 20G40}

\maketitle

\begin{abstract} Let $F$ be a finite field and $G=\GL(6,F)$. In this paper, we explicitly describe a certain twisted Jacquet module of an irreducible cuspidal representation of $G$.
\end{abstract}

\section{Introduction}

Let $F$ be a finite field and $G=\GL(n,F)$. Let $P$ be a parabolic subgroup of $G$ with Levi decomposition $P=MN$. Let $\pi$ be any irreducible finite dimensional complex representation of $G$ and $\psi$ be an irreducible representation of $N$. Let $\pi_{N, \psi}$ be the sum of all irreducible representations of $N$ inside $\pi$, on which $\pi$ acts via the character $\psi$. It is easy to see that $\pi_{N, \psi}$ is a representation of the subgroup $M_{\psi}$ of $M$, consisting of those elements in $M$ which leave the isomorphism class of $\psi$ invariant under the inner conjugation action of $M$ on $N$. The space $\pi_{N, \psi}$ is called the \textit{twisted Jacquet module} of the representation $\pi$. It is an interesting question to understand for which irreducible representations $\pi$, the twisted Jacquet module $\pi_{N, \psi}$ is non-zero and to understand its structure as a module for $M_{\psi}$. \\ 

In an earlier work of ours \cite{KumHim}, inspired by the work of Prasad in \cite{Dip[1]}, we studied the structure of a certain twisted Jacquet module of a cuspidal representation of $\GL(4,F)$. In this paper, we continue our study of the twisted Jacquet module for a cuspidal representation of $\GL(6,F)$. We refer the reader to Section 1 in \cite{KumHim} for a more elaborate introduction and the motivation to study the problem. \\

Before we state our result, we set up some notation. Let $G=\GL(6,F)$ and $P$ be the maximal parabolic subgroup of $G$ with Levi decomposition $P=MN$, where $M\simeq \GL(3,F) \times \GL(3,F)$ and $N\simeq \M(3,F)$.  We write $F_{6}$ for the unique field extension of $F$ of degree $6$. Let $\psi_{0}$ be a fixed non-trivial additive character of $F$.
Let \[A=\begin{bmatrix} 0 & 0 & 1\\ 0 & 0 & 0\\
0 & 0 & 0\end{bmatrix}\] and $\psi_{A}: N\rightarrow \mathbb{C}^{\times}$ be the character of $N$ given by
\begin{equation}\label{definition of psi_A}\psi_{A}\left ( \begin{bmatrix} 1 & X \\ 0 & 1\end{bmatrix}\right ) = \psi_{0}(\tr(AX)). \end{equation}
Let $H_{A}=M_{1} \times M_{2}$ where $M_{1}$ is the Mirabolic subgroup of $\GL(3,F)$ and $M_{2}=w_{0}M_{1}^{\top}w_{0}^{-1}$ where $w_{0}= \begin{bmatrix} 0 & 0 & 1 \\
0 & 1 & 0\\ 1 & 0 & 0\end{bmatrix}$. Let $U$ be the subgroup of unipotent matrices in $\GL(6,F)$ and $U_{A}= U\cap H_{A}$. Clearly, we have $U_{A}\simeq U_{1} \times U_{2}$ where $U_{1}$ and $U_{2}$ are the upper triangular unipotent subgroups of $\GL(3,F)$. For $k=1,2$, let $\mu_{k}: U_{k}\rightarrow \mathbb{C}^{\times}$ be the non-degenerate character of $U_{k}$ given by 
\[\mu_{k} \left ( \begin{bmatrix}
 1 & x_{12} & x_{13}\\
 0 &   1   & x_{23} \\
 0 &   0   &     1
 \end{bmatrix} \right )= \psi_0(x_{12}+ x_{23}).\]
Let $\mu: U_{A}\rightarrow \mathbb{C}^{\times}$ be the character of $U_{A}$ given by \[\mu(u)=\mu_{1}(u_{1})\mu_{2}(u_{2})\] where $u= \begin{bmatrix} u_{1} & 0 \\ 0 & u_{2}\end{bmatrix}$. \\

\begin{theorem}
Let $\theta$ be a regular character of $F_6^{\times}$ and $\pi=\pi_{\theta}$ be an irreducible cuspidal representation of $G$. Then 
\[\pi_{N,\psi_A} \simeq \theta|_{F^{\times}} \otimes \ind_{U_{A}}^{H_{A}}\mu\]
as $M_{\psi_A}$ modules.
\end{theorem}

We establish the above isomorphism by explicitly calculating the characters of $\pi_{N,\psi_{A}}$ and $\theta|_{F^{\times}} \otimes \ind_{U_A}^{H_A}(\mu)$, and showing that they are equal at any arbitrary element of $M_{\psi_A}$. \\

The calculation of the twisted Jacquet module for $\GL(4,F)$, did not provide us with much insight to predict the structure of the twisted Jacquet module for $\GL(2n,F)$. This motivated us to study the problem for $\GL(6,F)$ to get a better understanding of its structure. Based on our computations in these particular cases, we formulate the following conjecture for $\GL(2n,F)$. \\

Let $F_{n}$ be the unique field extension of $F$ of degree $n$ and let $A=\begin{bmatrix} 0 & \cdots & 1\\ \vdots & \adots & \vdots \\ 0 & \cdots & 0\end{bmatrix}\in \M(n,F)$. We let $H_{A}=M_{1}\times M_{2}$, $U_{A}=U_{1}\times U_{2}$ where  $M_{1}, M_{2}, U_{1}, U_{2}$ are appropriate subgroups of $\GL(n,F)$ as defined earlier. 

\begin{conjecture} Let $\theta$ be a regular character of $F_n^{\times}$ and $\pi=\pi_{\theta}$ be an irreducible cuspidal representation of $G$. Then 
\[\pi_{N,\psi_A} \simeq \theta|_{F^{\times}} \otimes \ind_{U_{A}}^{H_{A}}\mu\]
as $M_{\psi_A}$ modules.
\end{conjecture}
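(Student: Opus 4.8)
The plan is to prove the isomorphism at the level of characters, exactly as announced after the statement: evaluate the character of each side at an arbitrary $m\in M_{\psi_A}$ and verify that the two values agree. The first step is to make $M_{\psi_A}$ explicit. Writing $m=(g,h)$ and using that $M$ acts on $N\simeq\M(3,F)$ by $X\mapsto gXh^{-1}$, the requirement that $m$ fix the class of $\psi_A$ (equivalently $hAg^{-1}=A$ with $A=e_1e_3^{\top}$) forces the third row of $g$ to be $(0,0,a)$ and the first column of $h$ to be $(a,0,0)^{\top}$ for a common $a\in F^{\times}$. Thus $H_A=M_1\times M_2$ is precisely the subgroup defined by $a=1$, it is normal in $M_{\psi_A}$, and $m\mapsto a=g_{33}=h_{11}$ realizes $M_{\psi_A}/H_A\simeq F^{\times}$. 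Crucially, the center $Z_G=\{aI_6:a\in F^{\times}\}$ of $G$ lies in $M_{\psi_A}$ and splits this sequence, giving $M_{\psi_A}=Z_G\times H_A$. With this splitting the right-hand side is literally the external tensor product $\theta|_{F^{\times}}\boxtimes\ind_{U_A}^{H_A}\mu$, with $\theta|_{F^{\times}}$ the central character.

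For the right-hand side the computation is then routine. Writing $m=(aI_6)\,v$ with $v=(v_1,v_2)\in H_A$, we get
\[
\chi_{\mathrm{RHS}}(m)=\theta|_{F^{\times}}(a)\cdot\chi_{\ind_{U_1}^{M_1}\mu_1}(v_1)\cdot\chi_{\ind_{U_2}^{M_2}\mu_2}(v_2),
\]
where each factor is the character of the (irreducible) Gelfand--Graev representation of a mirabolic subgroup. Since $\mu_k$ is a nondegenerate character of the maximal unipotent $U_k\subset M_k$, the induced-character formula shows that $\chi_{\ind_{U_k}^{M_k}\mu_k}$ is supported on unipotent elements; hence $\chi_{\mathrm{RHS}}(m)$ vanishes unless $m$ is a central scalar times a unipotent element of $H_A$, and on that locus it is given explicitly by the induced-character formula.

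The substance lies on the left-hand side. Projecting onto the $\psi_A$-isotypic space gives, for $m=(g,h)\in M_{\psi_A}$,
\[
\chi_{\mathrm{LHS}}(m)=\frac{1}{|N|}\sum_{X\in\M(3,F)}\psi_0(-\tr(AX))\,\Theta_{\pi_\theta}\!\left(\begin{bmatrix} g & gX\\ 0 & h\end{bmatrix}\right),
\]
so everything reduces to the cuspidal character $\Theta_{\pi_\theta}=\pm R_T^{\theta}$, with $T\simeq F_6^{\times}$ the elliptic Coxeter torus, evaluated on these block upper-triangular elements. The decisive simplification is that for $m\in M_{\psi_A}$ both $g$ and $h$ carry the rational eigenvalue $a$ (indeed $g^{\top}e_3=ae_3$ and $he_1=ae_1$). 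Since $R_T^{\theta}$ vanishes on any element whose semisimple part is not conjugate into $T$, and the eigenvalues of an element of $T$ form a single Galois orbit repeated, the presence of the degree-one eigenvalue $a$ forces the semisimple part of the block matrix to be the scalar $aI_6$. Consequently the character is supported only where $g=au_1$ and $h=au_2$ with $u_1\in M_1,\ u_2\in M_2$ unipotent, in which case the block matrix equals $aI_6\cdot\left(\begin{smallmatrix} u_1 & u_1X\\ 0 & u_2\end{smallmatrix}\right)$, a central scalar times a unipotent element. The central character of $\pi_\theta$ then extracts the factor $\theta|_{F^{\times}}(a)$, matching the right-hand side, and identifies the surviving $m$ with the point $(a,(u_1,u_2))\in Z_G\times H_A$.

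The remaining task, which I expect to be the main obstacle, is to evaluate
\[
\frac{1}{|N|}\sum_{X}\psi_0(-\tr(AX))\,\Theta_{\pi_\theta}\!\left(\begin{bmatrix} u_1 & u_1X\\ 0 & u_2\end{bmatrix}\right)
\]
using the explicit Deligne--Lusztig (Green) formula for $\Theta_{\pi_\theta}$ on unipotent elements, and to match it with $\chi_{\ind_{U_1}^{M_1}\mu_1}(u_1)\cdot\chi_{\ind_{U_2}^{M_2}\mu_2}(u_2)$ from the right-hand side. The rank-one twist $\psi_A$ collapses the sum over $\M(3,F)$ onto a small set of entries, producing Gauss-sum--type expressions, and the regularity of $\theta$ is what forces the degenerate contributions to cancel. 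Carrying this out case by case in $m\in M_{\psi_A}$ and comparing with $\chi_{\mathrm{RHS}}(m)$ completes the proof.
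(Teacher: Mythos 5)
The statement you are addressing is the paper's \emph{conjecture} for $\GL(2n,F)$, and the first thing to note is that the paper contains no proof of it: the authors prove only the $\GL(6,F)$ case and explicitly formulate the general statement as a conjecture extrapolated from the $\GL(4)$ and $\GL(6)$ computations. Your proposal cannot close that gap because it is written entirely in the $n=3$ setting: you sum over $X\in\M(3,F)$, take $A=e_1e_3^{\top}\in\M(3,F)$, and invoke the torus $T\simeq F_6^{\times}$. Nothing in your argument is uniform in $n$, and the one place where uniformity would have to be produced --- the evaluation of the twisted character sum on unipotent pairs --- is exactly the place you leave open. So at best this is a plan for reproving the paper's $\GL(6)$ theorem, not a proof of the conjecture.

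Even read as a plan for the $\GL(6)$ case, there is a genuine gap. Your structural reductions are correct and coincide with the paper's: the computation of $M_{\psi_A}$ is Lemma \ref{elements in M psi A}, the splitting $M_{\psi_A}\simeq Z\times H_A$ and the extraction of $\theta(a)$ via the central character are the lemmas opening Sections 4 and 5, and your observation that the forced rational eigenvalue $a$ of $g$ and $h$, combined with Theorems \ref{character value of cuspidal representation (Gelfand)} and \ref{character value of cuspidal representation (Dipendra)}, confines the support of $\Theta_{N,\psi_A}$ to central-times-unipotent elements is a clean packaging of what the paper encodes implicitly in its list of unipotent types. But the step you defer as ``the remaining task'' and ``the main obstacle'' is not a routine verification; it is the entire mathematical content of the paper: Proposition \ref{dim(ker(h-1))} converting cuspidal character values into the quantity $\dim\Ker(h-1)$, the symmetry results (Theorem \ref{T(i,j)=T(j,i)}, Theorem \ref{Type(x,5)}, Propositions \ref{Type(x,3)}, \ref{(x,7)}, \ref{(x,8)}, Theorem \ref{Type(x,10)}) reducing all pairs of types to the fundamental set $E$, and the long case-by-case cardinality tables in Theorems \ref{Type(1,1)}, \ref{Type(1,2)}, \ref{Type(4,1)}, \ref{Type(2,2)}, \ref{Type(4,2)}, \ref{Type(4,4)} and \ref{(1,6)}--\ref{(6,6)} that actually produce the values $(1-q)^2$, $(1-q)$, $(1-q)(1-q^2)$, and so on. Your sketch supplies no mechanism for any of this: the assertions that the sum ``collapses onto a small set of entries, producing Gauss-sum--type expressions'' and that ``the regularity of $\theta$ forces the degenerate contributions to cancel'' do not correspond to checkable steps (regularity enters only through the character values in Theorem \ref{character value of cuspidal representation (Dipendra)}; the cancellations in the paper are numerical, obtained by explicit counting). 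As it stands the proposal is a correct outline of the strategy with the proof missing --- and, for general $n$, missing in a way that no one has yet filled, which is precisely why the paper states the result as a conjecture.
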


The calculations involved in the $\GL(6,F)$ case are much more involved than in the case of $\GL(4,F)$ and we hope that some of these calculations may be useful in the general case. It is also an interesting problem to study the structure of $\pi_{N,\psi_{A}}$ in the case when the finite field is replaced with a p-adic field. Our hope is that understanding the problem completely for the finite group case might help in understanding the problem in the p-adic case. We hope to study these problems in future.

\section{Preliminaries}
In this section, we mention some preliminary results that we need in our paper.

\subsection{Character of a Cuspidal Representation} Let $F$ be the finite field of order $q$ and $G=\GL(m,F)$. Let $F_m$ be the unique field extension of $F$ of degree $m$. A character $\theta$ of $F^{\times}_{m}$ is called a ``regular'' character, if under the action of the Galois group of $F_{m}$ over $F$, $\theta$ gives rise to $m$ distinct characters of $F^{\times}_{m}$. It is a well known fact that the cuspidal representations of $\GL(m,F)$ are parametrized by the regular characters of $F_{m}^{\times}$. To avoid introducing more notation, we mention below only the relevant statements on computing the character values that we have used. We refer the reader to Section 6 in \cite{Gel[1]} for more precise statements on computing character values.

\begin{theorem}\label{character value of cuspidal representation (Gelfand)}
Let $\theta$ be a regular character of $F^{\times}_{m}$. Let $\pi=\pi_{\theta}$ be an irreducible cuspidal representation of $\GL(m,F)$ associated to $\theta$. Let $\Theta_{\theta}$ be its character. If $g\in \GL(m,F)$ is such that the characteristic polynomial of $g$ is not a power of a polynomial irreducible over $F$. Then, we have  \[\Theta_\theta(g)=0. \]
\end{theorem}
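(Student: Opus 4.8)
The plan is to deduce the pointwise vanishing from the vanishing of an ordinary Jacquet module, extracting the single value by a conjugation argument inside an abelian unipotent radical.

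First I would reduce to an element lying in a maximal Levi subgroup whose two blocks have coprime characteristic polynomials. Write the Jordan decomposition $g = su$ with $s$ semisimple and $u$ unipotent, and recall that the characteristic polynomial of $g$ equals that of $s$. By hypothesis this polynomial has at least two distinct irreducible factors, say $\prod_{i=1}^{k} p_i^{a_i}$ with $k \ge 2$. The primary decomposition $V = F^m = \bigoplus_{i} \ker p_i(s)^{a_i}$ is preserved by $s$, and since $u$ commutes with $s$ it is preserved by $u$ as well; grouping it as $V = V' \oplus V''$ with $V' = \ker p_1(s)^{a_1}$ gives a $g$-stable splitting on which the two characteristic polynomials $p_1^{a_1}$ and $\prod_{i\ge 2} p_i^{a_i}$ are coprime. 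Choosing a basis adapted to this splitting and using that $\Theta_\theta$ is a class function, I may assume $g = \operatorname{diag}(\alpha,\beta)$ lies in the standard maximal Levi $M = \GL(d',F)\times\GL(d'',F)$, $d'+d''=m$, with $\alpha,\beta$ of coprime characteristic polynomial. Let $P = MN$ be the corresponding maximal parabolic, so that $N \cong \M(d',d'',F)$ is abelian.

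Next I would feed in cuspidality. Since $\pi$ is cuspidal, its Jacquet module $\pi_N$ vanishes for this proper $P$; over $\mathbb{C}$ the $N$-coinvariants are isomorphic to the $N$-invariants, so $\pi^N = 0$ and the averaging idempotent $e_N = |N|^{-1}\sum_{n \in N}\pi(n)$ is the zero operator. Consequently $\sum_{n\in N}\pi(gn) = |N|\,\pi(g)\,e_N = 0$, and taking traces gives $\sum_{n \in N}\Theta_\theta(gn) = 0$. Finally I would isolate $\Theta_\theta(g)$: for $n_0 \in N$ a direct computation shows that the $N$-conjugacy class of $gn_0$ is the coset $g\bigl(n_0 + W\bigr)$, where $W = \Image(\Ad(g^{-1}) - \Id)$ and, in the coordinate $X \in \M(d',d'',F)$, the map $\Ad(g^{-1}) - \Id$ is the Sylvester-type operator $X \mapsto \alpha^{-1}X\beta - X$. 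Since $\alpha$ and $\beta$ have coprime characteristic polynomials they share no eigenvalue over $\overline{F}$, so this operator is invertible and $W = N$. Hence every $gn_0$ is $G$-conjugate to $g$, the class function $\Theta_\theta$ is constant on $gN$, and the identity above reads $|N|\,\Theta_\theta(g) = 0$, forcing $\Theta_\theta(g)=0$.

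The linear-algebra reduction is routine; the conceptual crux is that the Jacquet-module vanishing only yields the vanishing of the \emph{sum} over the coset $gN$, and the main obstacle is passing from this to the single value. This is exactly where the hypothesis is used twice: it guarantees a two-block splitting into coprime factors, so that $N$ can be taken abelian and $\Ad(g^{-1})-\Id$ is governed by a Sylvester map, and coprimality makes that map invertible, collapsing the coset to a single conjugacy class.
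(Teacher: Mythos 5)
Your proof is correct, and it cannot be ``the same as the paper's'' because the paper offers no proof of this statement at all: Theorem 2.1 is quoted from Section 6 of Gelfand's paper, where the vanishing falls out of the explicit computation of the characters of cuspidal representations (the same source that yields the value formula in Theorem 2.2). Your route is the standard self-contained alternative, and every step checks: the primary decomposition $V=\bigoplus_i \Ker p_i(s)^{a_i}$ is $u$-stable because $u$ commutes with $s$, so $g$ is conjugate into a proper maximal Levi with blocks $\alpha,\beta$ of coprime characteristic polynomials; cuspidality kills the averaging idempotent $e_N$ (invariants equal coinvariants over $\mathbb{C}$), giving $\sum_{n\in N}\Theta_\theta(gn)=0$; and since coprimality in $F[x]$ persists in $\overline{F}[x]$, the blocks share no eigenvalue, the Sylvester map $X\mapsto \alpha^{-1}X\beta-X$ on $N\simeq \M(d',d'',F)$ is invertible, and the coset $gN$ collapses to a single conjugacy class, so the sum equals $|N|\,\Theta_\theta(g)$. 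Comparing the two approaches: Gelfand's computation is much heavier but delivers the full character table of $\pi_\theta$, which this paper genuinely needs elsewhere (all of Section 3 and 5 rest on Theorem 2.2), whereas your argument is formula-free, shorter, and strictly more general --- it shows that \emph{every} cuspidal representation, not just one attached to a regular character, vanishes on any element whose characteristic polynomial is not a power of an irreducible. It is also worth noting that your key identity is exactly the paper's own machinery in disguise: taking $\psi$ trivial in the character formula for the twisted Jacquet module (Proposition 2.3 here) gives $\frac{1}{|N|}\sum_{n\in N}\Theta_\theta(gn)=\Theta_{N,1}(g)$, which vanishes by cuspidality, so your proof could be phrased entirely inside the paper's framework; the only genuinely new ingredient you supply is the observation that the $N$-conjugation orbit of $gn_0$ is the full coset $gN$ when the Sylvester operator is invertible, which is the step that upgrades the vanishing of the average to the vanishing of the single value $\Theta_\theta(g)$.
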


\begin{theorem}\label{character value of cuspidal representation (Dipendra)} Let $\theta$ be a regular character of $F^{\times}_{m}$. Let $\pi=\pi_{\theta}$ be an irreducible cuspidal representation of $\GL(m,F)$ associated to $\theta$. Let $\Theta_{\theta}$ be its character. Suppose that $g=s.u$ is the Jordan decomposition of an element $g$ in $\GL(m,F)$. If $\Theta_{\theta}(g)\neq 0$, then the semisimple element $s$ must come from $F_{m}^{\times}$. Suppose that $s$ comes from $F_{m}^{\times}$. Let $z$ be an eigenvalue of $s$ in $F_{m}$ and let $t$ be the dimension of the kernel of $g-z$ over $F_{m}$. Then
\[\Theta_{\theta}(g)=(-1)^{m-1}\bigg[\sum_{\alpha=0}^{d-1}\theta(z^{q^{\alpha}})\bigg ](1-q^{d})(1-(q^{d})^{2})\cdots (1-(q^{d})^{t-1}). \]
where $q^{d}$ is the cardinality of the field generated by $z$ over $F$, and the summation is over the distinct Galois conjugates of $z$.
\end{theorem}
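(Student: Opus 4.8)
The plan is to realise the cuspidal character $\Theta_\theta$ as, up to sign, a Deligne--Lusztig character and then apply the Deligne--Lusztig character formula. Write $q=|F|$ and let $T\subseteq G=\GL(m,F)$ be the (non-split) Coxeter maximal torus, i.e. the image of $F_m^{\times}$ acting on $F_m\cong F^m$ by multiplication, so that $T\cong F_m^{\times}$. Since $\theta$ is regular, $(-1)^{m-1}R_T^{\theta}$ is an irreducible cuspidal representation, and it is precisely $\pi_\theta$; thus $\Theta_\theta=(-1)^{m-1}R_T^{\theta}$. The $F$-rank of $\GL(m)$ is $m$ while the $F$-rank of $T$ is $1$ (its maximal split subtorus is the central $F^{\times}$), which is exactly where the sign $(-1)^{m-1}$ comes from. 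I would then invoke the Deligne--Lusztig character formula: for $g=su$ the Jordan decomposition,
\[ R_T^{\theta}(g)=\frac{1}{|C_G(s)|}\sum_{\substack{x\in G\\ x^{-1}sx\in T}}\theta(x^{-1}sx)\,Q_{xTx^{-1}}^{C_G(s)}(u), \]
where $Q_{T'}^{C}$ denotes the Green function of $C$ evaluated at the unipotent element $u$. (Centralizers of semisimple elements in $\GL$ are connected, so $C_G(s)=C_G^{\circ}(s)$.)

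Next I would analyse the right-hand side. By Theorem~\ref{character value of cuspidal representation (Gelfand)} the hypothesis that $s$ come from $F_m^{\times}$ forces its characteristic polynomial to be the $(m/d)$-th power of the degree-$d$ minimal polynomial of $z$ over $F$, where $F(z)=F_{q^d}$; hence the $F$-algebra $F[s]$ is isomorphic to $F_{q^d}$, the space $F^m$ becomes an $F_{q^d}$-vector space of dimension $k:=m/d$, and $C_G(s)\cong \GL(k,F_{q^d})$. Setting $Q:=q^d$, the torus $T\cong F_{q^m}^{\times}=F_{Q^k}^{\times}$ is a Coxeter torus of $C_G(s)\cong\GL(k,Q)$, and every $xTx^{-1}$ occurring in the sum (which lies in $C_G(s)$ because $s\in xTx^{-1}$) is again such a Coxeter torus, hence $C_G(s)$-conjugate to $T$; since Green functions are constant on $C_G(s)$-conjugacy classes of tori, the factor $Q_{xTx^{-1}}^{C_G(s)}(u)$ is independent of $x$ and may be pulled out. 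The condition $x^{-1}sx\in T$ picks out those $\beta\in F_{q^m}^{\times}$ conjugate to $s$, i.e. the $d$ distinct $F$-conjugates $z,z^{q},\dots,z^{q^{d-1}}$ of $z$, and for each such $\beta$ the set $\{x:x^{-1}sx=t_\beta\}$ is a single coset of $C_G(s)$, of cardinality $|C_G(s)|$. Thus the prefactor $1/|C_G(s)|$ cancels against these coset sizes and the whole expression collapses to
\[ R_T^{\theta}(g)=\Big(\sum_{\alpha=0}^{d-1}\theta(z^{q^{\alpha}})\Big)\,Q_{\mathrm{Cox}}^{\GL(k,Q)}(u). \]

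It remains to evaluate the Coxeter Green function $Q_{\mathrm{Cox}}^{\GL(k,Q)}(u)$, and this is the step I expect to be the main obstacle. The relevant input is that $u$, viewed as an $F_{q^d}$-linear unipotent operator on $F_{q^d}^{\,k}$, has exactly $t$ Jordan blocks, where $t=\dim_{F_m}\ker(g-z)$: on the $z$-eigenspace of $s$ one has $g-z=z(u-1)$, and base-changing $F_{q^d}\hookrightarrow F_{q^m}$ preserves Jordan type, so the $F_m$-dimension of $\ker(g-z)$ equals the number of Jordan blocks of $u$ over $F_{q^d}$. The claim to be proved is then the purely group-theoretic identity
\[ Q_{\mathrm{Cox}}^{\GL(k,Q)}(u)=(1-Q)(1-Q^{2})\cdots(1-Q^{\,t-1}), \]
i.e. the Coxeter Green function depends only on the number $t$ of Jordan blocks and is given by this product. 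I would establish it either by quoting Green's explicit Green polynomials for $\GL$ (equivalently, the entries of the transition matrix between power sums and Hall--Littlewood symmetric functions indexed by the one-part partition $(k)$), or by an induction on $k$ using Deligne--Lusztig restriction to a Levi subgroup together with the transitivity of Harish-Chandra/Deligne--Lusztig induction; the base cases $k=1,2$ are immediate and fix the normalisation. Substituting this value and recalling $\Theta_\theta=(-1)^{m-1}R_T^{\theta}$ yields
\[ \Theta_\theta(g)=(-1)^{m-1}\Big(\sum_{\alpha=0}^{d-1}\theta(z^{q^{\alpha}})\Big)(1-q^{d})(1-(q^{d})^{2})\cdots(1-(q^{d})^{t-1}), \]
which is the asserted formula. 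The only genuinely delicate points are the verification of the Green-function product formula and the careful bookkeeping of signs and of the cardinalities of the conjugating cosets; everything else is a direct unwinding of the Deligne--Lusztig character formula.
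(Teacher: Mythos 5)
Your proposal is correct in outline, but note that the paper contains no proof of this statement to compare against: it is quoted as a known result, with the proof deferred to Theorem 2 of Prasad's paper \cite{Dip[1]}, which in turn rests on Green's computation of the irreducible characters of $\GL(m,F)$. Your Deligne--Lusztig derivation is thus a genuine derivation where the paper simply cites, and it is the standard modern route: the identification $\Theta_{\theta}=(-1)^{m-1}R_{T}^{\theta}$ for the Coxeter torus $T\simeq F_{q^{m}}^{\times}$ (with the sign correctly accounted for by the $F$-ranks), the unwinding of the character formula, the identification $C_{G}(s)\simeq \GL(k,q^{d})$ with $k=m/d$, the coset count producing $\sum_{\alpha=0}^{d-1}\theta(z^{q^{\alpha}})$, and the observation that $t$ equals the number of Jordan blocks of $u$ over $F_{q^{d}}$ are all sound; moreover your approach yields the vanishing assertion (the case where $s$ is not conjugate into $T$) for free, since the sum is then empty. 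Your claim that all tori $xTx^{-1}$ occurring are $C_{G}(s)$-conjugate can be pinned down by an order count: $|xTx^{-1}|=q^{m}-1=(q^{d})^{k}-1$, while a maximal torus of $\GL(k,q^{d})$ of type $\lambda\vdash k$ has order $\prod_{i}((q^{d})^{\lambda_{i}}-1)$, which equals $(q^{d})^{k}-1$ only for $\lambda=(k)$. Two caveats. First, your appeal to the Gelfand vanishing theorem to determine the characteristic polynomial of $s$ is a misattribution: since $s$ is conjugate to multiplication by $z$ on $F_{q^{m}}$, its characteristic polynomial is directly the $(m/d)$-th power of the minimal polynomial of $z$, with no input from that theorem. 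Second, and more substantively, the identity
\[
Q_{\mathrm{Cox}}^{\GL(k,q^{d})}(u)=(1-q^{d})(1-(q^{d})^{2})\cdots(1-(q^{d})^{t-1}),
\]
which you rightly single out as the main obstacle, is exactly the content of Green's theorem that the paper's citation outsources; quoting Green's polynomials therefore makes your argument precisely as self-contained as the paper's, no more. If you want an independent proof, you must actually carry out the induction you sketch, and the nontrivial point there is exactly the claim you assert in passing: that the Coxeter Green function depends only on the number $t$ of Jordan blocks of $u$ and not on the full partition. Your base cases $k=1,2$ and the consistency checks against the values in Lemma 3.1 of the paper (for instance, $t=3$ gives $-(q-1)(q^{2}-1)$) do confirm the normalization, but they do not by themselves establish the general identity.
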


See Theorem 2 in \cite{Dip[1]} for this version.

\subsection{Twisted Jacquet Module}

In this section, we recall the character and the dimension formula of the twisted Jacquet module of a representation $\pi$. \\

Let $G=\GL(k,F)$ and $P=MN$ be a parabolic subgroup of $G$. Let $\psi$ be a character of $N$. For $m\in M$, let $\psi^{m}$ be the character of $N$ defined by $\psi^{m}(n)=\psi(mnm^{-1})$. Let \[V(N,\psi)= \Span_{\mathbb{C}} \{\pi(n)v-\psi(n)v \mid n \in N, v \in V \} \]
and \[M_{\psi} = \{m \in M \mid {\psi}^{m}(n)=\psi(n) , \forall n \in N \} . \]
Clearly, $M_{\psi}$ is a subgroup of $M$ and it is easy to see that $V(N,\psi)$ is an $M_{\psi}$-invariant subspace of $V$. Hence, we get a representation  $(\pi_{N,\psi},V/V(N,\psi))$ of $M_{\psi}$. We call $(\pi_{N,\psi}, V/V(N,\psi))$ the twisted Jacquet module of $\pi$ with respect to $\psi$. We write $\Theta_{N, \psi}$ for the character of $\pi_{N, \psi}$.

\begin{proposition}
Let $(\pi,V)$ be a representation of $\GL(k,F)$  and $\Theta_\pi$ be the character of $\pi$. We have
\[\Theta_{N,\psi}(m) = \frac{1}{|N|}\sum_{n \in N}\Theta_{\pi}(mn)\overline{\psi(n)}.\]
\end{proposition}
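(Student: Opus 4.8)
The plan is to realise the abstract quotient $V/V(N,\psi)$ concretely as a subspace of $V$ by means of an averaging operator attached to $\psi$, and then to read off the character of $\pi_{N,\psi}$ as a weighted trace over $N$. To this end I would introduce the operator
\[ P_\psi = \frac{1}{|N|}\sum_{n\in N}\overline{\psi(n)}\,\pi(n) \in \End(V). \]
Since $\psi$ is a character of $N$ we have $\overline{\psi(n)}=\psi(n)^{-1}=\psi(n^{-1})$, and a change of variables $n\mapsto nn'$ in the resulting double sum shows that $P_\psi^2=P_\psi$. Thus $P_\psi$ is idempotent and $V=\Image P_\psi\oplus\Ker P_\psi$.

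Next I would identify the kernel of $P_\psi$ with $V(N,\psi)$. On one hand, the elementary identity
\[ v - P_\psi v = -\frac{1}{|N|}\sum_{n\in N}\overline{\psi(n)}\bigl(\pi(n)v-\psi(n)v\bigr) \]
exhibits $v-P_\psi v$ as a member of $V(N,\psi)$ for every $v$, whence $\Ker P_\psi\subseteq V(N,\psi)$. On the other hand, a direct computation on the spanning vectors gives $P_\psi\bigl(\pi(n_0)v-\psi(n_0)v\bigr)=0$, so that $V(N,\psi)\subseteq\Ker P_\psi$. Hence $\Ker P_\psi=V(N,\psi)$, and the quotient map restricts to a linear isomorphism $\Image P_\psi\cong V/V(N,\psi)$. (One may also check that $\Image P_\psi$ is exactly the $\psi$-isotypic subspace $\{v:\pi(n)v=\psi(n)v \text{ for all } n\in N\}$, although this is not strictly needed.)

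The decisive point, and the step I expect to be the main obstacle, is to upgrade this to an isomorphism of $M_\psi$-modules. For $m\in M_\psi$ the defining condition $\psi^m=\psi$, together with the fact that $M$ normalises $N$, permits the substitution $n'=mnm^{-1}$ in the sum defining $P_\psi$ and yields $\pi(m)P_\psi=P_\psi\pi(m)$. Consequently $\Image P_\psi$ is $M_\psi$-stable and the linear isomorphism above intertwines the $M_\psi$-actions; this is precisely where the definition of $M_\psi$ is used, and it is what guarantees that the character of $\pi_{N,\psi}$ can be computed on $\Image P_\psi$.

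Finally, since $P_\psi$ is an $M_\psi$-equivariant idempotent with $\Image P_\psi\cong\pi_{N,\psi}$, for any $m\in M_\psi$ the operator $\pi(m)P_\psi$ acts as $\pi(m)$ on $\Image P_\psi$ and as $0$ on $\Ker P_\psi$; hence $\Theta_{N,\psi}(m)=\tr\bigl(\pi(m)P_\psi\bigr)$. Expanding this trace gives
\[ \Theta_{N,\psi}(m)=\frac{1}{|N|}\sum_{n\in N}\overline{\psi(n)}\,\tr\bigl(\pi(m)\pi(n)\bigr)=\frac{1}{|N|}\sum_{n\in N}\Theta_\pi(mn)\overline{\psi(n)}, \]
which is the asserted formula. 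The only genuinely substantive step is the equivariance in the previous paragraph; the idempotency and the final trace manipulation are routine once the projection $P_\psi$ is in place.
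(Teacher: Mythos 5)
Your proof is correct and takes essentially the same route as the paper: the paper does not prove this proposition in-text but defers to Proposition 2.3 of \cite{KumHim}, which is precisely the standard projection-operator argument you give (averaging $\overline{\psi(n)}\pi(n)$ over $N$ to get an idempotent, identifying its kernel with $V(N,\psi)$, checking $M_{\psi}$-equivariance via the substitution $n\mapsto mnm^{-1}$, and computing the character as $\tr(\pi(m)P_{\psi})$). Nothing is missing; the equivariance step you single out is exactly where the definition of $M_{\psi}$ enters, and your handling of it is complete.
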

We refer the reader to Proposition 2.3 in \cite{KumHim} for a proof. 

\begin{remark}
Taking $m=1$, we get the dimension of $\pi_{N, \psi}$. To be precise, we have
\[\dim_{\mathbb{C}}(\pi_{N,\psi}) =  \frac{1}{|N|}\sum_{n \in N}\Theta_{\pi}(n)\overline{\psi(n)}.\]
\end{remark}

\subsection{Character of the induced representation} In this section, we recall the character formula for the induced representation of a group $G$. For a proof, we refer the reader to Chapter 3, Theorem 12 in \cite{Ser}. \\

\begin{proposition}
Let $G$ be a finite group and $H$ be a subgroup of $G$. Let $(\pi,V)$ be a representation of $H$ and $\chi_\pi$ be the character of $\pi$. Then for each $s \in G$, the character of $\ind_{H}^{G}(\pi)$ is given by
\[\chi_{\ind_H^{G}(\pi)}(s) = \frac{1}{|H|} \sum_{\substack{t \in G \\ t^{-1}st \in H }}\chi_\pi(t^{-1}st). \]
\end{proposition}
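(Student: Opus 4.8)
The plan is to prove the formula directly from a concrete model of the induced representation. First I would fix a transversal $t_{1},\dots,t_{m}$ for the left cosets of $H$ in $G$, so that $G=\bigsqcup_{i=1}^{m}t_{i}H$ with $m=[G:H]$, and realize $\ind_{H}^{G}(\pi)$ on the space $W=\bigoplus_{i=1}^{m}t_{i}\otimes V$, where $t_{i}\otimes V$ is a copy of $V$ indexed by $t_{i}$. The action is determined by the rule that for $s\in G$ and each $i$ there are a unique index $\sigma(i)$ and a unique element $h_{i}\in H$ with $st_{i}=t_{\sigma(i)}h_{i}$, and then $\ind_{H}^{G}(\pi)(s)$ sends $t_{i}\otimes v$ to $t_{\sigma(i)}\otimes\pi(h_{i})v$. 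Writing the operator $\ind_{H}^{G}(\pi)(s)$ as an $m\times m$ array of blocks indexed by the cosets, the $(j,i)$ block equals $\pi(h_{i})$ if $j=\sigma(i)$ and is $0$ otherwise.

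The second step is to read off the trace from this block description. Only the diagonal blocks, those with $\sigma(i)=i$, contribute to $\tr\ind_{H}^{G}(\pi)(s)$. Now $\sigma(i)=i$ means exactly $st_{i}\in t_{i}H$, i.e. $t_{i}^{-1}st_{i}\in H$, and in that case $h_{i}=t_{i}^{-1}st_{i}$, so the block is $\pi(t_{i}^{-1}st_{i})$ and contributes $\tr\pi(t_{i}^{-1}st_{i})=\chi_{\pi}(t_{i}^{-1}st_{i})$. Summing over the diagonal blocks gives
\[\chi_{\ind_{H}^{G}(\pi)}(s)=\sum_{\substack{1\le i\le m\\ t_{i}^{-1}st_{i}\in H}}\chi_{\pi}(t_{i}^{-1}st_{i}).\]

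Finally I would pass from this sum over a transversal to the averaged sum over all of $G$ appearing in the statement. The key observation is that the summand $t\mapsto\chi_{\pi}(t^{-1}st)$, extended by $0$ when $t^{-1}st\notin H$, is constant on each left coset $t_{i}H$: if $t=t_{i}h$ with $h\in H$, then $t^{-1}st=h^{-1}(t_{i}^{-1}st_{i})h$, which lies in $H$ precisely when $t_{i}^{-1}st_{i}$ does, and on $H$ the character $\chi_{\pi}$ is a class function, so $\chi_{\pi}\bigl(h^{-1}(t_{i}^{-1}st_{i})h\bigr)=\chi_{\pi}(t_{i}^{-1}st_{i})$. Since each coset has exactly $|H|$ elements, replacing the transversal sum by $\frac{1}{|H|}$ times the sum over all $t\in G$ with $t^{-1}st\in H$ leaves the value unchanged, which is the asserted formula. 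The one point demanding care---and the only real obstacle---is the verification in this last step that the contribution is both independent of the choice of transversal and correctly counted with multiplicity $|H|$; both follow from the twin facts that $H$-conjugation preserves membership in $H$ and that $\chi_{\pi}$ is invariant under $H$-conjugation.
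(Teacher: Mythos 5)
Your proof is correct and is essentially the standard argument: the paper gives no proof of its own but cites Serre (Chapter 3, Theorem 12), and your argument---realizing $\ind_{H}^{G}(\pi)$ on $\bigoplus_{i} t_{i}\otimes V$, extracting the trace from the blocks with $\sigma(i)=i$, and then converting the transversal sum into $\frac{1}{|H|}$ times a sum over all of $G$ using the $H$-conjugation invariance of $\chi_{\pi}$---is precisely the proof given in that reference. Nothing is missing; the coset-independence point you flag at the end is handled correctly.
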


\section{Dimension of the Twisted Jacquet Module}
 
Let $\pi=\pi_{\theta}$ be an irreducible cuspidal representation of $G$ corresponding to the regular character $\theta$ of $F_{6}^{\times}$ and $\Theta_{\theta}$ be its character. Throughout, we write $\M(n,m,r,q)$ for the set of $n\times m$ matrices of rank $r$ over the finite field $F=F_{q}$. In this section, we calculate the dimension of $\pi_{N,\psi_{A}}$. Before we continue, we record some preliminary lemmas that we need. \\

\begin{lemma} Let $r\in \{0, 1, 2 ,3\}$ and $X\in \M(3,3,r,q)$. We have
\[\Theta_{\theta}\left (\begin{bmatrix} 1 & X \\ 0 & 1 \end{bmatrix}\right )=   \begin{cases}
\hspace{0.3 cm} (q-1)(q^{2}-1)(q^{3}-1)(q^4-1)(q^5-1), \hspace{0.3 cm} & \text{if} \hspace{0.2 cm}  $r=0 $\\
-(q-1)(q^{2}-1)(q^3-1)(q^4-1), \hspace{0.3 cm} & \text{if} \hspace{0.2 cm} $r=1$ \\
\hspace{0.3 cm} (q-1)(q^2-1)(q^3-1), & \text{if} \hspace{0.2 cm} $r=2$  \\
-(q-1)(q^2-1), & \text{if} \hspace{0.2 cm} $r=3$  \\
\end{cases}\]
\end{lemma}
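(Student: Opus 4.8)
The plan is to apply Theorem~\ref{character value of cuspidal representation (Dipendra)} directly, since the matrix in question is unipotent and so its semisimple part is the identity, which trivially comes from $F_6^{\times}$. Write $g=\begin{bmatrix} 1 & X \\ 0 & 1\end{bmatrix}$. Because $g$ is unipotent, its Jordan decomposition $g=s\cdot u$ has $s=\Id$ and $u=g$, and the unique eigenvalue is $z=1\in F^{\times}\subseteq F_6^{\times}$. Thus the hypothesis of Theorem~\ref{character value of cuspidal representation (Dipendra)} is satisfied and we are entitled to use the explicit formula there.

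Next I would pin down the three quantities that enter the formula. Since $z=1$ generates $F$ over $F$, the field it generates has cardinality $q$, so $q^{d}=q$, i.e.\ $d=1$; moreover $z=1$ is its own unique Galois conjugate, so $\sum_{\alpha=0}^{d-1}\theta(z^{q^{\alpha}})=\theta(1)=1$. For the integer $t$, I would compute
\[g-z=g-\Id=\begin{bmatrix} 0 & X \\ 0 & 0\end{bmatrix},\]
and observe that a vector $\begin{bmatrix} v_{1} \\ v_{2}\end{bmatrix}$ lies in its kernel if and only if $Xv_{2}=0$, with $v_{1}$ arbitrary. Hence the kernel has dimension $3+\dim\Ker X=3+(3-r)=6-r$, and since the rank of $X$ is unchanged on extending scalars to $F_{6}$, we obtain $t=6-r$.

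Substituting $m=6$, $d=1$, and $t=6-r$ into the formula then gives
\[\Theta_{\theta}(g)=(-1)^{5}(1-q)(1-q^{2})\cdots(1-q^{5-r}),\]
a product of exactly $5-r$ factors. The remaining work is to simplify this expression for each $r\in\{0,1,2,3\}$ by writing each factor as $1-q^{j}=-(q^{j}-1)$ and collecting the signs; for instance, when $r=0$ the five sign changes combine with $(-1)^{5}$ to give $+(q-1)(q^{2}-1)(q^{3}-1)(q^{4}-1)(q^{5}-1)$, and the other three cases follow identically.

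I do not expect any genuine obstacle here, as the argument is a direct substitution into the known character formula. The only points requiring real care are the correct identification $t=6-r$ from the kernel of $g-\Id$, and the bookkeeping of the overall sign $(-1)^{m-1}$ against the $5-r$ factors $(1-q^{j})$, which is what produces the alternating signs in the four cases of the statement.
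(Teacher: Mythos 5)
Your proposal is correct and takes exactly the same route as the paper: the paper's entire proof is a one-line appeal to the character value formula of Theorem~\ref{character value of cuspidal representation (Dipendra)}, and your argument just makes that appeal explicit by substituting $m=6$, $z=1$, $d=1$, and $t=6-r$ (the kernel dimension being unchanged under extension of scalars to $F_6$). Your sign bookkeeping $(-1)^{5}\cdot(-1)^{5-r}=(-1)^{r}$ reproduces all four cases of the lemma, so there is nothing to add.
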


\begin{proof} The result follows from Theorem~\ref{character value of cuspidal representation (Dipendra)} above. \\
\end{proof}

Let \[X= \begin{bmatrix}
a & d & g\\
b & e & h\\
c & f & k \end{bmatrix}, A= \begin{bmatrix}
1 & 0 & 0\\
0 & 0 & 0\\
0 & 0 & 0 \end{bmatrix}, AX= \begin{bmatrix} a & d & g\\
0 & 0 & 0\\
0 & 0 & 0 \end{bmatrix}.\]

For $\alpha \in F$ and $r \in \{0,1,2,3\}$, consider the subset ${Y^{\alpha}_{3,r}}$ of $\M(3,F)$ given by 
\[ {Y^{\alpha}_{3,r}}=\{ X \in \M(3,F) \mid \Rank(X)=r, \tr(AX)= \alpha \}.\]

\begin{lemma}\label{equal cardinality} Let $r\in \{1,2,3\}$ and $\alpha, \beta \in F^{\times}$. Then we have
\[\# Y_{3,r}^{\alpha} = \# Y_{3,r}^{\beta}. \]
\end{lemma}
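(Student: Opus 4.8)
The plan is to exhibit an explicit rank-preserving bijection between $Y_{3,r}^{\alpha}$ and $Y_{3,r}^{\beta}$. First I would record the observation that, with $A$ as fixed in this section, $\tr(AX)=a$ is simply the $(1,1)$ entry of $X$; thus $Y_{3,r}^{\alpha}$ is precisely the set of rank-$r$ matrices in $\M(3,F)$ whose top-left entry equals $\alpha$. This reduces the problem to comparing the number of rank-$r$ matrices with a prescribed nonzero value in the $(1,1)$ slot.

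Next, given $\alpha,\beta\in F^{\times}$, I would set $\lambda=\beta\alpha^{-1}\in F^{\times}$ and $D=\mathrm{diag}(\lambda,1,1)\in\GL(3,F)$, and define $\phi\colon\M(3,F)\to\M(3,F)$ by $\phi(X)=DX$. Since $D$ is invertible, left multiplication by $D$ is a bijection of $\M(3,F)$ onto itself that preserves rank, with inverse $X\mapsto D^{-1}X$. Because $D$ scales the first row of $X$ by $\lambda$ and fixes the other rows, the $(1,1)$ entry of $DX$ is $\lambda a$, so $\tr\bigl(A\,\phi(X)\bigr)=\lambda\,\tr(AX)$. Hence if $X\in Y_{3,r}^{\alpha}$, then $\phi(X)$ still has rank $r$ and satisfies $\tr(A\,\phi(X))=\lambda\alpha=\beta$, so $\phi$ restricts to a map $Y_{3,r}^{\alpha}\to Y_{3,r}^{\beta}$. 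The same computation applied to $D^{-1}=\mathrm{diag}(\lambda^{-1},1,1)$ shows that $X\mapsto D^{-1}X$ restricts to a map $Y_{3,r}^{\beta}\to Y_{3,r}^{\alpha}$, and these two restrictions are mutually inverse. Therefore $\phi$ is a bijection between the two sets, giving $\#Y_{3,r}^{\alpha}=\#Y_{3,r}^{\beta}$.

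There is no genuine obstacle in this argument; the only points needing care are verifying that the scaling matrix $D$ lies in $\GL(3,F)$ — which is exactly where the hypothesis $\alpha,\beta\in F^{\times}$ enters, ensuring $\lambda\neq 0$ — and confirming that multiplication by $D$ simultaneously preserves rank and transforms the trace condition as claimed. I would note that the hypothesis $r\geq 1$ is not actually used: the same bijection works uniformly for every rank, including $r=0$, in which case both sets are empty and the equality holds trivially.
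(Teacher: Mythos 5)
Your proof is correct and takes essentially the same approach as the paper: the paper's bijection is scalar multiplication $X \mapsto \alpha^{-1}\beta X$, while yours is left multiplication by $\mathrm{diag}(\beta\alpha^{-1},1,1)$, and in both cases the verification is the same one-line check that an invertible multiplier preserves rank and scales $\tr(AX)$ by $\beta\alpha^{-1}$. Your side remarks --- that for the $A$ of this section $\tr(AX)$ is just the $(1,1)$ entry of $X$, and that the excluded case $r=0$ would hold vacuously since both sets are then empty --- are accurate but not needed.
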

\begin{proof} Consider the map $\phi: Y_{3,r}^{\alpha} \to Y_{3,r}^{\beta}$ given by \[\phi(X)=\alpha^{-1}\beta X.\] 
Suppose that $\phi(X)=\phi(Y)$. Since $\alpha^{-1}\beta \neq 0$, it follows that $\phi$ is injective. For $Y \in Y_{3,r}^{\beta}$, let $X=\alpha \beta^{-1} Y$. Clearly, we have $\tr(AX)=\alpha$ and $\Rank(X)=\Rank(Y)=r$. Thus $\phi$ is surjective and hence the result.
\end{proof}

\begin{theorem}\label{dimension calculation} Let $\theta$ be a regular character of $F_{6}^{\times}$ and $\pi=\pi_{\theta}$ be an irreducible cuspidal representation of $\GL(6,F)$. We have \[\dim_{\mathbb{C}}(\pi_{N, \psi_{A}})= (q-1)^{2}(q^2-1)^{2}.\]
\end{theorem}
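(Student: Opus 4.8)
The starting point is the dimension formula recorded in the Remark following Proposition~2.5, which gives
\[\dim_{\mathbb{C}}(\pi_{N,\psi_A}) = \frac{1}{|N|}\sum_{n\in N}\Theta_{\theta}(n)\,\overline{\psi_A(n)}.\]
Since $N\simeq \M(3,F)$ we have $|N|=q^{9}$, and writing $n=\begin{bmatrix}1 & X\\ 0 & 1\end{bmatrix}$ with $\psi_A(n)=\psi_0(\tr(AX))$, the whole problem reduces to evaluating
\[\frac{1}{q^{9}}\sum_{X\in \M(3,F)}\Theta_{\theta}\!\left(\begin{bmatrix}1 & X\\ 0 & 1\end{bmatrix}\right)\overline{\psi_0(\tr(AX))}.\]
The plan is to exploit the fact that the summand depends on $X$ only through two coarse invariants: the rank of $X$ and the scalar $\tr(AX)$.

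By the first lemma of this section the character value $\Theta_{\theta}\begin{bmatrix}1&X\\0&1\end{bmatrix}$ depends only on $\Rank(X)$, so I would first partition $\M(3,F)$ according to the rank $r\in\{0,1,2,3\}$ and pull the constant character value out of each block. Within a fixed rank $r$, I would then group the matrices by the value $\alpha=\tr(AX)$, that is, over the sets $Y_{3,r}^{\alpha}$, reducing the double sum to
\[\frac{1}{q^{9}}\sum_{r=0}^{3}\Theta_{\theta}(r)\sum_{\alpha\in F}\#Y_{3,r}^{\alpha}\,\overline{\psi_0(\alpha)}.\]
By Lemma~\ref{equal cardinality} the cardinality $\#Y_{3,r}^{\alpha}$ is independent of $\alpha$ as $\alpha$ ranges over $F^{\times}$; denoting this common value by $c_r$ and writing $d_r=\#Y_{3,r}^{0}$, the orthogonality relation $\sum_{\alpha\in F^{\times}}\overline{\psi_0(\alpha)}=-1$ (valid since $\psi_0$ is nontrivial) collapses each inner sum to $d_r-c_r$, and to $1$ when $r=0$, where only the zero matrix occurs.

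The heart of the computation is therefore the counting of the $c_r$ and $d_r$, i.e. for each rank $r$ the number of $3\times 3$ matrices of rank $r$ whose $(1,1)$-entry vanishes versus equals a fixed nonzero scalar. These I would obtain by elementary linear algebra: for $r=1$ via the parametrization $X=uv^{\top}$ with the constraint imposed on $u_1v_1$; for $r=3$ by counting extensions of a prescribed first column to a basis of $F^{3}$; and for $r=2$, the most delicate case, by a careful count of rank-$2$ matrices with a prescribed top-left entry. I expect the rank-$2$ count to be the main obstacle, since, unlike the rank-$1$ and rank-$3$ cases, it does not reduce to a single clean parametrization. A useful consistency check, and an alternative route to this contribution, is the identity $\sum_{r=0}^{3}(d_r-c_r)=0$, which follows at once from $\sum_{X\in\M(3,F)}\overline{\psi_0(\tr(AX))}=0$; this recovers the rank-$2$ sum from the other three.

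Once all four inner sums are in hand, I would substitute the explicit character values of the rank formula. The result is an alternating combination of the form
\[\tfrac{1}{q^{9}}\,(q-1)(q^2-1)\Big[(q^3-1)(q^4-1)(q^5-1)(d_0-c_0)-(q^3-1)(q^4-1)(d_1-c_1)+(q^3-1)(d_2-c_2)-(d_3-c_3)\Big],\]
in which the common factor $(q-1)(q^2-1)$ has been extracted. The final step is to verify that the bracketed polynomial simplifies to $q^{9}(q-1)(q^2-1)$, whence $\dim_{\mathbb{C}}(\pi_{N,\psi_A})=(q-1)^{2}(q^2-1)^{2}$.
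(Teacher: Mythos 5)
Your proposal is correct, and its skeleton coincides with the paper's: both start from the dimension formula $\dim_{\mathbb{C}}(\pi_{N,\psi_A})=\frac{1}{q^9}\sum_X\Theta_\theta\left(\begin{smallmatrix}1&X\\0&1\end{smallmatrix}\right)\overline{\psi_0(\tr(AX))}$, partition $\M(3,F)$ by rank, pull out the character values of the unipotent elements (which depend only on $\Rank(X)$), and then use Lemma~\ref{equal cardinality} together with $\sum_{\alpha\in F^{\times}}\overline{\psi_0(\alpha)}=-1$ to collapse each rank contribution to $\#Y_{3,r}^{0}-\#Y_{3,r}^{1}$. The genuine difference is how you handle the rank-$2$ count, which is precisely the bulk of the paper's proof: the paper computes $\#Y_{3,2}^{0}$ and $\#Y_{3,2}^{1}$ by an explicit enumeration, partitioning $Y_{3,2}^{\alpha}$ according to the rank of the lower-right $2\times 2$ block $X'$ and tabulating a dozen-odd families (Tables 3--5), whereas you bypass this entirely via the identity $\sum_{r=0}^{3}(d_r-c_r)=0$, which follows from $\sum_{X\in\M(3,F)}\overline{\psi_0(\tr(AX))}=0$ and recovers $d_2-c_2$ from the easy rank-$0$, rank-$1$ and rank-$3$ counts. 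I verified that your shortcut reproduces the paper's value $d_2-c_2=q^6-q^5-2q^4+q^2+q$ (using $d_1-c_1=q^4+q^3-q^2-q-1$ and $d_3-c_3=-q^3(q^2-1)(q-1)$), and that your final bracketed polynomial does equal $q^{9}(q-1)(q^2-1)$, giving $(q-1)^2(q^2-1)^2$. Your route is shorter and less error-prone for the dimension alone; what the paper's brute-force tables buy is a concrete stratification of $\M(3,F)$ of the same kind that is reused throughout the later character computations (Section 5), so the two approaches trade economy here against groundwork for what follows.
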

\begin{proof}
It is easy to see that the dimension of $\pi_{N,\psi_{A}}$ is given by
\begin{equation}\label{dimension formula} \dim_{\mathbb{C}}(\pi_{N,\psi_{A}})= \frac{1}{q^{9}} \sum_{X\in \M(3,F)}\Theta_{\theta}\left(\begin{bmatrix} 1 & X \\ 0 & 1\end{bmatrix}\right)\overline{\psi_{0}(\tr(AX))}. \end{equation}

We calculate the following sums

\begin{enumerate}
\item[a)] $S_{1} = \displaystyle \sum_{X\in \M(3,3,0,q)} \Theta_{\theta}\left (\begin{bmatrix} 1 & X \\ 0 & 1 \end{bmatrix}\right )\overline{\psi_{0}(\tr(AX))}$ \\
\item[b)] $S_{2} = \displaystyle \sum_{\substack{X\in \M(3,3,1,q) \\ \tr(AX)=0 }}\Theta_{\theta}\left (\begin{bmatrix} 1 & X \\ 0 & 1 \end{bmatrix}\right )\overline{\psi_{0}(\tr(AX))}\quad   +  \displaystyle \sum_{\substack{X\in \M(3,3,1,q) \\ \tr(AX)=\alpha\neq 0 }}\Theta_{\theta}\left (\begin{bmatrix} 1 & X \\ 0 & 1 \end{bmatrix}\right )\overline{\psi_{0}(\tr(AX))}$ \\
\item[c)] $S_{3} = \displaystyle \sum_{\substack{X\in \M(3,3,2,q) \\ \tr(AX)=0 }}\Theta_{\theta}\left (\begin{bmatrix} 1 & X \\ 0 & 1 \end{bmatrix}\right )\overline{\psi_{0}(\tr(AX))}\quad  +  \displaystyle \sum_{\substack{X\in \M(3,3,2,q) \\ \tr(AX)=\alpha\neq 0 }}\Theta_{\theta}\left (\begin{bmatrix} 1 & X \\ 0 & 1 \end{bmatrix}\right )\overline{\psi_{0}(\tr(AX))}$ \\
\item[d)] $S_{4} = \displaystyle \sum_{\substack{X\in \M(3,3,3,q) \\ \tr(AX)=0 }}\Theta_{\theta}\left (\begin{bmatrix} 1 & X \\ 0 & 1 \end{bmatrix}\right )\overline{\psi_{0}(\tr(AX))}\quad  +  \displaystyle \sum_{\substack{X\in \M(3,3,3,q) \\ \tr(AX)=\alpha\neq 0 }}\Theta_{\theta}\left (\begin{bmatrix} 1 & X \\ 0 & 1 \end{bmatrix}\right )\overline{\psi_{0}(\tr(AX))}$ 
\end{enumerate}
separately to compute the dimension of $\pi_{N, \psi_{A}}$. \\

For a fixed $r \in \{0,1,2,3\}$ and $\alpha \in \{0,1\}$, we find a partition of $Y_{3,r}^{\alpha}$ into certain subsets, and compute the cardinality of each of these subsets to find the cardinality of $Y_{3,r}^{\alpha}$. We record the necessary information in the tables below.\\

For $a)$, we clearly have
\begin{align*}
S_{1} &= \Theta_{\theta} \left (\begin{bmatrix} 1 & X \\ 0 & 1 \end{bmatrix}\right ) \overline{\psi_{0}(0)} \\
&= (q-1)(q^{2}-1)(q^{3}-1)(q^4-1)(q^5-1). 
\end{align*}
\vspace{-0.5 cm}

For $b)$, we have
\begin{center}
\captionof{table}{$\Rank(X)=1$}
\label{rk(X)=1 type}
\scriptsize
 \begin{tabular}{| c | c | c | c |}
 \hline 
 & & & \\
 Partition of $Y_{3,1}^{0}$& Cardinality & Partition of $Y_{3,1}^{1}$ & Cardinality\\
 & & &\\
  
    \hline
    & &    & \\
    $\left \{ \begin{bmatrix} 0 & 0 &0  \\ b & \lambda b & \beta b\\
c & \lambda c & \beta c \end{bmatrix} \right \}$    & $(q^2-1)q^2$  &$  \left \{ \begin{bmatrix} 1 & \lambda & \beta\\
         b & \lambda b & \beta b \\
         c & \lambda c & \beta c
         \end{bmatrix}  \right \} $ & $q^4$\\
 & & &\\
    \hline

         &     &      &            \\
          $ \left \{ \begin{bmatrix} 0 & d & \lambda d\\
         0 & e & \lambda e\\
         0 & f & \lambda f
         \end{bmatrix} \right \}$ & $(q^3-1)q$ & - & - \\
         & & & \\
    \hline

         &     &         &            \\
         $\left \{ \begin{bmatrix} 0 & 0 & g\\
         0 & 0 & h\\
         0 & 0 & k
         \end{bmatrix} \right \} $ & $q^3-1$ & - &  -\\  
         & &  &\\
    \hline
    
    \end{tabular} 
\end{center}

\vspace{-0.2 cm}
Thus, 
\begin{align*}
S_{2} &= \Theta_{\theta} \left (\begin{bmatrix} 1 & X \\ 0 & 1 \end{bmatrix}\right ) \left(\displaystyle \sum_{\substack{X\in \M(3,3,1,q) \\ \tr(AX)=0 }}\overline{\psi_{0}(0)}   \quad +  \displaystyle \sum_{\substack{X\in \M(3,3,1,q) \\ \tr(AX)=\alpha\neq 0 }}\overline{\psi_{0}(\alpha)} \right ) \\\\
&= \Theta_{\theta} \left (\begin{bmatrix} 1 & X \\ 0 & 1 \end{bmatrix}\right ) \left ( \# Y_{3,1}^{0} - \# Y_{3,1}^{1}\right )\\ \\
&= -(q-1)(q^{2}-1)(q^{3}-1)(q^{4}-1)((q^{2}-1)q^{2}+ (q^{3}-1)q + (q^{3}-1)- q^{4})\\\\
&= -(q-1)^2(q^{2}-1)^2(q^8+2q^7+2q^6+q^5-2q^4-3q^3-4q^2-2q-1). \\
\end{align*}
For $d)$, we have
\begin{center}
\captionof{table}{$\Rank(X)=3$}
\label{rk(X)=3 type}
\scriptsize
 \begin{tabular}{| c | c | c | c |}
 \hline 
 & & & \\
 Partition of $Y_{3,3}^{0}$& Cardinality & Partition of $Y_{3,3}^{1}$ & Cardinality\\
 & & &\\
  
    \hline
    & &    & \\
   $ \left \{ \begin{bmatrix} 0 & d & g  \\ b & e & h\\
c & f & k \end{bmatrix}  \right \}$    & $(q^2-1)(q^3-q)(q^3-q^2)$  & $  \left \{ \begin{bmatrix} 1 & d & g\\
         b & e  & h \\
         c & f & k
         \end{bmatrix}  \right \} $ & $q^2(q^3-q)(q^3-q^2)$\\
 & & &\\
    \hline

    \end{tabular} 
\end{center}
Thus,
\begin{align*}
S_{4} &= \Theta_{\theta} \left (\begin{bmatrix} 1 & X \\ 0 & 1 \end{bmatrix}\right ) \left(\displaystyle \sum_{\substack{X\in \M(3,3,3,q) \\ \tr(AX)=0 }}\overline{\psi_{0}(0)}   \quad +  \displaystyle \sum_{\substack{X\in \M(3,3,3,q) \\ \tr(AX)=\alpha\neq 0 }}\overline{\psi_{0}(\alpha)} \right ) \\\\
&= \Theta_{\theta} \left (\begin{bmatrix} 1 & X \\ 0 & 1 \end{bmatrix}\right ) \left ( \# Y_{3,3}^{0} - \# Y_{3,3}^{1}\right )\\ \\
&= -(q-1)(q^{2}-1)((q^{2}-1)(q^{3}-q)(q^{3}-q^{2})- q^{2}(q^{3}-q)(q^{3}-q^{2}))\\\\
&= (q-1)^2(q^2-1)^2q^3.
\end{align*}

For $c)$, we let $X'=\begin{bmatrix} e & h \\f & k \end{bmatrix}$. For $\alpha\in \{0,1\}$, we partition the set $Y_{3,2}^{\alpha}$ according to the rank of $X'$ and count the cardinalities of each of these subsets. For $\Rank(X')\in \{0,1,2\}$ and $\alpha\in \{0,1\}$ and we record the cardinality of such subsets of $Y_{3,2}^{\alpha}$ in the following tables. \\

\begin{center}
\captionof{table}{$\Rank(X)=2, \Rank(X')=0$}
\label{rk(X)=2,rk(X')=0 type}
\scriptsize
 \begin{tabular}{| c | c | c | c |}
 \hline 
 & & & \\
 Partition of $Y_{3,2}^{0}$ & Cardinality & Partition of $Y_{3,2}^{1}$ & Cardinality\\
 & & &\\
  
    \hline
    & &    & \\
   $B_1=  \left \{ \begin{bmatrix} 0 & d & g  \\ b & 0 & 0\\
c & 0 & 0 \end{bmatrix} \,\middle\vert\, (d,g) \neq (0,0)  \right \}$    & $ (q^2-1)^2$ & $B_2 =  \left \{ \begin{bmatrix} 1 & d & g\\
         b & 0 &  0 \\
         c & 0 & 0
                  \end{bmatrix}  \,\middle\vert\, (d,g) \neq (0,0)  \right \} $ & $(q^2-1)^2$\\
 & & &\\
    \hline

    \end{tabular} 
\end{center}

\begin{center}
\captionof{table}{$\Rank(X)=2, \Rank(X')=2$}
\label{rk(X)=2,rk(X')=2 type}
\scriptsize
\resizebox{1.25\textwidth}{!}{
 \begin{tabular}{| c | c | c | c |}
 \hline 
 & & &  \\
 Partition of $Y_{3,2}^0$ & Cardinality & Partition of $Y_{3,2}^1$ & Cardinality \\
 & & &\\
  
    \hline
    & & &\\
   $C_1=  \left \{ \begin{bmatrix} 0 & 0 & 0\\
   \lambda e+\beta h  & e & h  \\ \lambda f+\beta k & f & k\\
 \end{bmatrix}  \right \}$    & $ (q^2-1)(q^2-q)(q^2)$ & $C_4 =  \left \{ \begin{bmatrix} 1  & d & 0\\
         d^{-1}e+\beta h & e &  h \\
         d^{-1}f+\beta k & f & k
                  \end{bmatrix}, \begin{bmatrix} 1 & 0 & g\\
                 
         \lambda e+ g^{-1} h & e &  h \\
         \lambda f+g^{-1}k & f & k
                  \end{bmatrix} \right \} $ & $2(q^2-1)(q^2-q)^2$\\
& & & \\
\hline
 & & & \\
   $C_2=  \left \{ \begin{bmatrix}0  & d & 0  \\ \beta h & e & h\\
\beta k & f & k \end{bmatrix}, \begin{bmatrix}0  & 0 & g  \\ \beta e & e & h\\
\beta f & f & k \end{bmatrix} \right \}$    & $ 2(q^2-1)(q^2-q)^2$ & $C_5 =  \left \{ \begin{bmatrix} 1  & d & g\\
         d^{-1}(1-\beta g)e+\beta h & e &  h \\
         d^{-1}(1-\beta g)f+\beta k & f & k
                  \end{bmatrix} \right \} $ & $(q^2-1)(q^3-q^2)(q-1)^2$\\
& & &\\

\hline
 & & & \\
   $C_3=  \left \{ \begin{bmatrix}0  & d & g  \\ \beta(-gd^{-1}e+h) & e & h\\
\beta (-gd^{-1}f+ k) & f & k \end{bmatrix} \right \}$    & $ (q^2-1)(q^2-q)^2(q-1)$ &- & -\\
 & & &\\
 \hline 
    \end{tabular} }
\end{center}

\begin{center}
\captionof{table}{$\Rank(X)=2,\Rank(X')=1$}
\label{rk(X)=2,rk(X')=1 type}
\scriptsize
\resizebox{1.25\textwidth}{!}{
 \begin{tabular}{| c | c | c | c |}
 \hline 
 & & & \\
 Partition of $Y_{3,2}^{0}$ & Cardinality & Partition of $Y_{3,2}^1$ & Cardinality\\
 & & &\\
  
    \hline
    & &    & \\
   $E_1=  \left \{ \begin{bmatrix} 0 & 0 & 0 \\ b & e & 0\\
c & f & 0 \end{bmatrix},  \begin{bmatrix} 0 & 0 & 0 \\ b & 0 & h\\
c & 0 & k \end{bmatrix} \right \}$    & $2(q^2-1)(q^2-q)$  & $F_1=  \left \{ \begin{bmatrix} 1 & 0 & 0 \\ b & 0 & h\\
c & 0 & k \end{bmatrix}, \begin{bmatrix} 1 & 0 & 0 \\ b & e & 0\\
c & f & 0 \end{bmatrix}  \right \} $ & $2(q^2-1)q^2$\\
 & & &\\
    \hline

         &     &      &            \\
          $E_2= \left \{ \begin{bmatrix} 0 & d & 0\\
          b & e & 0\\
         c & f & 0
         \end{bmatrix}, \begin{bmatrix} 0 & 0 & g\\
         b  & 0 & h\\
         c & 0 & k
         \end{bmatrix}\right \}$ & $2(q^2-1)^2(q-1)$ & $F_2=  \left \{ \begin{bmatrix} 1 & d & 0 \\ \beta h & 0 & h\\
\beta k & 0 & k \end{bmatrix}, \begin{bmatrix} 1 & 0 & g \\ \beta e & e & 0\\
\beta f & f & 0 \end{bmatrix}\right \} $ & $2(q^2-1)(q^2-q)$\\
         & & & \\
    \hline

         &     &         &            \\
         $E_3*= \left \{ \begin{bmatrix} 0 & 0 & g\\
         \lambda e  & e & 0\\
         \lambda f & f & 0
         \end{bmatrix},  \begin{bmatrix} 0 & d & 0\\
          \lambda h & 0 & h\\
         \lambda k & 0 & k
         \end{bmatrix}\right \} $ & $2(q^2-1)(q^2-q)$ & $F_3=  \left \{ \begin{bmatrix} 1 & 0 & g \\ b & 0 & h\\
c & 0 & k \end{bmatrix}  \,\middle\vert\, (b,c) \neq (g^{-1}h,g^{-1}k)  \right \} \cup \left \{ \begin{bmatrix} 1 & d & 0 \\ b & e & 0\\
c & f & 0 \end{bmatrix}  \,\middle\vert\, (b,c) \neq (d^{-1}e,d^{-1}f)  \right \} $ & $2(q^2-1)^2(q-1)$\\  
         & &  &\\
    \hline
     &     &         &            \\
         $E_4 = \left \{ \begin{bmatrix} 0 & d & g\\
         -d^{-1}\beta ge & e & 0\\
         -d^{-1}\beta g f & f & 0
         \end{bmatrix}, \begin{bmatrix} 0 & d & g\\
         -g^{-1}\beta dh & 0 & h\\
         -g^{-1}\beta dk & 0 & k
         \end{bmatrix} \right \} $ & $2(q^3-q)(q-1)^2$ & $F_4=  \left \{ \begin{bmatrix} 1 & d & g \\ \beta h & 0 & h\\
\beta k & 0 & k \end{bmatrix}, \begin{bmatrix} 1 & d & g \\ \beta e & e & 0\\
\beta f & f & 0 \end{bmatrix} \right \} $ & $2q(q^2-1)(q-1)^2$\\
         & &  &\\
    \hline
     &     &         &            \\
         $E_5 = \left \{ \begin{bmatrix} 0 & 0 & 0\\
         b & \lambda h & h\\
         c & \lambda k & k
         \end{bmatrix}  \,\middle\vert\, \lambda \neq 0 \right \} $ & $(q^2-1)(q-1)(q^2-q)$ & $F_5=  \left \{ \begin{bmatrix} 1 & 0 & 0 \\ b & e & \lambda e\\
c & f & \lambda f \end{bmatrix}  \,\middle\vert\, \lambda \neq 0 \right \} $ & $(q^2-1)q^2(q-1)$\\ 
         & &  &\\
    \hline
      &     &         &            \\
         $E_6 = \left \{ \begin{bmatrix} 0 & d & 0\\
         \beta h & \lambda h & h\\
         \beta k & \lambda k & k
         \end{bmatrix},\begin{bmatrix} 0 & 0 & g\\
         \beta h & \lambda h & h\\
         \beta k & \lambda k & k
         \end{bmatrix}  \,\middle\vert\, \lambda \neq 0 \right \} $ & $2q(q^2-1)(q-1)^2$ & $F_6=  \left \{ \begin{bmatrix} 1 & d & 0 \\ d^{-1}e+\beta \lambda e & e & \lambda e\\
d^{-1}f+\beta\lambda f & f & \lambda f \end{bmatrix}, \begin{bmatrix} 1 & 0 & g \\ \delta e+ g^{-1} \lambda e & e & \lambda e\\
\delta f+g^{-1}\lambda f & f & \lambda f \end{bmatrix}  \,\middle\vert\, \lambda \neq 0 \right \} $ & $2q(q^2-1)(q-1)^2$\\
         & &  &\\
    \hline
     &     &         &            \\
         $E_7 = \left \{ \begin{bmatrix} 0 & d & g\\
         -d^{-1}\beta g \lambda h+\beta h & \lambda h & h\\
         -d^{-1}\beta g \lambda k+\beta k & \lambda k & k
         \end{bmatrix} \mid d \neq \lambda g, \lambda \neq 0 \right \} $ & $(q^2-1)(q-1)^2(q^2-2q)$ & $F_7=  \left \{ \begin{bmatrix} 1 & d & g \\d^{-1}(1-\beta g)e+\beta \lambda e & e & \lambda e\\
d^{-1}(1-\beta g)f+\beta \lambda f & f & \lambda f \end{bmatrix}  \,\middle\vert\, g \neq \lambda d, \lambda \neq 0 \right \} $ & $(q^2-1)(q-1)^2(q^2-2q)$\\ 
         & &  &\\
    \hline
     &     &         &            \\
         $E_8 = \left \{ \begin{bmatrix} 0 & \lambda g & g\\
         b & \lambda h & h\\
         c & \lambda k & k
         \end{bmatrix} \lambda \neq 0 \right \} $ & $(q^2-1)^2(q-1)^2$ & $F_8=  \left \{ \begin{bmatrix} 1 & d & \lambda d \\ b & e & \lambda e\\
c &  f & \lambda f  \end{bmatrix}  \,\middle\vert\, \lambda \neq 0  \right \} $ & $(q^2-1)^2(q-1)^2$\\ 
         & &  &\\
    \hline
    \end{tabular} }
\end{center}
We have, 
\[Y_{3,2}^0= B_1 \bigsqcup_{i=1}^3 C_i \bigsqcup_{j=1}^8 E_j \]
and
\[Y_{3,2}^1= B_2 \bigsqcup_{i=4}^5 C_i \bigsqcup_{j=1}^8 F_j.\]
Thus,
\begin{align*}
S_{3} &= \Theta_{\theta} \left (\begin{bmatrix} 1 & X \\ 0 & 1 \end{bmatrix}\right ) \left(\displaystyle \sum_{\substack{X\in \M(3,3,2,q) \\ \tr(AX)=0 }}\overline{\psi_{0}(0)}   \quad +  \displaystyle \sum_{\substack{X\in \M(3,3,2,q) \\ \tr(AX)=\alpha\neq 0 }}\overline{\psi_{0}(\alpha)} \right ) \\\\
&= \Theta_{\theta} \left (\begin{bmatrix} 1 & X \\ 0 & 1 \end{bmatrix}\right ) \left ( \# Y_{3,2}^{0} - \# Y_{3,2}^{1}\right )\\ \\
&= (q-1)(q^2-1)(q^3-1)(q^6-q^5-2q^4+q^2+q)\\\\
&= (q-1)^2(q^2-1)^2(q^6-q^4-3q^3-2q^2-q).
\end{align*}
From ~\eqref{dimension formula}, it follows that
\begin{align*}
\dim_{\mathbb{C}}(\pi_{N,\psi_{A}}) & = \frac{1}{q^{9}}\{S_{1} + S_{2} + S_{3}+S_{4]}\} \\
&= \frac{1}{q^{9}}(q-1)^2(q^2-1)^2q^9\\
&= (q-1)^{2}(q^2-1)^2.
\end{align*}
\end{proof}

\begin{remark} Suppose that $B=Aw_{0}$. It is easy to see that $\Theta_{N,\psi_{A}}\left(\begin{bmatrix} m_{1} & 0 \\ 0 & m_{2}\end{bmatrix}\right)= \Theta_{N,\psi_{B}}\left( \begin{bmatrix} w_{0}m_{1}w_{0} & 0 \\ 0 & m_{2}\end{bmatrix} \right)$. Thus we have that $\dim(\pi_{N,\psi_{A}})=\dim(\pi_{N, \psi_{B}})$. 
\end{remark}

\section{Main Theorem}
In this section, we prove the main result of this paper. Hereafter, we take $$A=\begin{bmatrix} 0 & 0 & 1 \\ 0 & 0 & 0 \\ 0 & 0 & 0\end{bmatrix}.$$For the sake of completeness, we recall the statement below. 
\begin{theorem}
Let $\theta$ be a regular character of $F_6^{\times}$ and $\pi=\pi_{\theta}$ be an irreducible cuspidal representation of $G$. Then 
\[\pi_{N,\psi_A} \simeq \theta|_{F^{\times}} \otimes \ind_{U_{A}}^{H_{A}}\mu\]
as $M_{\psi_A}$ modules.
\end{theorem}

The key idea of the proof is to compute the characters of the representations $\rho=\theta|_{F^{\times}} \otimes \ind_{U_{A}}^{H_{A}}\mu$ and $\pi_{N,\psi_{A}}$ and show that they are equal at any arbitrary element in $M_{\psi_{A}}$. Before we continue, we set up some notation and record a few lemmas that we need.

\begin{lemma}\label{elements in M psi A} Let $M_{\psi_{A}}=\left\{m\in M \mid \psi^{m}_{A}(n)= \psi_{A}(n), \forall n\in N\right\}$. Then we have
\[M_{\psi_{A}}= \left \{\begin{bmatrix} a_{11} & a_{12} & a_{13} & & & \\ a_{21} & a_{22}  & a_{23} & & & \\ 0 & 0 & a & & & \\  & & & a &  y_{12} & y_{13} \\
& & & 0 & y_{22} & y_{23}\\
& & & 0 & y_{32} & y_{33} \end{bmatrix} \mid a \in F^{\times} \right \}.\]
\end{lemma}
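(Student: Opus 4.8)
The plan is to translate the defining condition $\psi_A^m=\psi_A$ into an explicit matrix identity and then read off the constraints entry by entry. First I would write a general element of $M$ as $m=\begin{bmatrix} m_1 & 0 \\ 0 & m_2\end{bmatrix}$ with $m_1,m_2\in\GL(3,F)$, and a general element of $N$ as $n=\begin{bmatrix} 1 & X \\ 0 & 1\end{bmatrix}$ with $X\in\M(3,F)$. A direct block computation gives
\[ mnm^{-1}=\begin{bmatrix} 1 & m_1 X m_2^{-1} \\ 0 & 1\end{bmatrix}, \]
so that $\psi_A^m(n)=\psi_0(\tr(A m_1 X m_2^{-1}))$, whereas $\psi_A(n)=\psi_0(\tr(AX))$.

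Since $\psi_0$ is a fixed nontrivial character of the additive group $F$, the equality $\psi_A^m(n)=\psi_A(n)$ for all $n\in N$ is equivalent to $\tr(A m_1 X m_2^{-1})=\tr(AX)$ for every $X\in\M(3,F)$. Using the cyclic invariance of the trace I would rewrite the left-hand side as $\tr((m_2^{-1}A m_1)X)$. The bilinear form $(B,X)\mapsto\tr(BX)$ on $\M(3,F)$ is a perfect pairing (testing against the elementary matrices recovers each entry of $B$, since $\tr(B E_{ji})=B_{ij}$), so the condition collapses to the single matrix equation
\[ m_2^{-1}A m_1=A, \qquad\text{equivalently}\qquad A m_1 = m_2 A. \]

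Then I would solve $A m_1=m_2 A$ by direct comparison. Writing $m_1=(a_{ij})$ and $m_2=(b_{ij})$, the product $A m_1$ places the third row of $m_1$ into the first row and is zero elsewhere, while $m_2 A$ places the first column of $m_2$ into the third column and is zero elsewhere. Equating the two matrices forces $a_{31}=a_{32}=0$, $b_{21}=b_{31}=0$, and $a_{33}=b_{11}$; calling this common value $a$, one gets exactly the shape claimed, with $m_1$ having last row $(0,0,a)$ and $m_2$ having first column $(a,0,0)^{\top}$.

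Finally I would address the constraint $a\in F^{\times}$. Since $m_1\in\GL(3,F)$ has last row $(0,0,a)$, expanding its determinant along that row gives $\det m_1=a\cdot\det\begin{bmatrix} a_{11} & a_{12} \\ a_{21} & a_{22}\end{bmatrix}$, which forces $a\neq 0$ (and the same follows from $m_2$); conversely any $m$ of the displayed form with $a\neq 0$ and invertible $2\times 2$ blocks lies in $M$ and satisfies $A m_1=m_2 A$, so the two sets coincide. There is no deep obstacle here: the computation is elementary once the invariance is reduced to $A m_1=m_2 A$. The only points requiring genuine care are tracking the correct side of the conjugation action (so that $X\mapsto m_1 X m_2^{-1}$, not its inverse or transpose) and the correct placement of rows versus columns when evaluating $A m_1$ and $m_2 A$.
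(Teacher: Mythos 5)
Your proof is correct and follows essentially the same route as the paper's: both reduce the invariance condition $\psi_A^m=\psi_A$ to the matrix equation $Am_1=m_2A$ and then read off the block shape, with $a\in F^{\times}$ forced by invertibility. The only difference is that you spell out the intermediate steps (the conjugation formula, the nondegeneracy of the trace pairing) that the paper leaves implicit.
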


\begin{proof} Let $g=\begin{bmatrix} g_1 & 0\\
0 & g_2 \end{bmatrix} \in M$. Then $ g \in M_{\psi_A}$ if and only if $ Ag_1=g_2A$. It follows that $g \in M_{\psi_A}$ if and only if $g_1= \begin{bmatrix}
a_{11} & a_{12} & a_{13}\\
a_{21} & a_{22} & a_{23} \\
0 & 0 & a \end{bmatrix}$ and $g_2=\begin{bmatrix}
a & y_{12} & y_{13}\\
0 & y_{22} & y_{23}\\
0 & y_{32} & y_{33} \end{bmatrix}$.
\end{proof}

\begin{lemma}
Let $Z=Z(G)$ be the center of $G$. Then, we have 
\[M_{\psi_{A}} \simeq Z \times H_{A}.\]
\end{lemma}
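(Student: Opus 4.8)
The plan is to exhibit an explicit isomorphism by multiplying the central factor into the block-diagonal copy of $H_A$. Embed $H_A = M_1 \times M_2$ in $G$ via $\iota(h_1, h_2) = \begin{bmatrix} h_1 & 0 \\ 0 & h_2 \end{bmatrix}$, and define
\[
\phi : Z \times H_A \longrightarrow M_{\psi_A}, \qquad \phi\big(\lambda \Id_6, (h_1, h_2)\big) = \lambda\,\iota(h_1, h_2) = \begin{bmatrix} \lambda h_1 & 0 \\ 0 & \lambda h_2 \end{bmatrix}.
\]
I would first check that $\phi$ actually lands in $M_{\psi_A}$ using the description of $M_{\psi_A}$ from Lemma~\ref{elements in M psi A}. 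Since $M_1$ is the mirabolic subgroup of $\GL(3,F)$ (invertible matrices whose last row is $(0,0,1)$), the matrix $\lambda h_1$ has the shape of the upper block $g_1$, with its $(3,3)$-entry equal to $\lambda$. A short computation with $w_0$ identifies $M_2 = w_0 M_1^{\top} w_0^{-1}$ as the invertible matrices with first column $(1,0,0)^{\top}$, so $\lambda h_2$ has the shape of the lower block $g_2$, with its $(1,1)$-entry equal to $\lambda$. As these two shared entries agree (both equal $\lambda$), $\phi(\lambda\Id_6,(h_1,h_2))$ meets the defining conditions of $M_{\psi_A}$ with parameter $a = \lambda$.

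Next I would verify that $\phi$ is a homomorphism, which is immediate because $Z$ is central and $M_1, M_2$ are groups: the product $\phi(\lambda\Id_6,(h_1,h_2))\,\phi(\mu\Id_6,(h_1',h_2'))$ equals $\begin{bmatrix} \lambda\mu\, h_1 h_1' & 0 \\ 0 & \lambda\mu\, h_2 h_2'\end{bmatrix} = \phi\big(\lambda\mu\Id_6, (h_1 h_1', h_2 h_2')\big)$. For injectivity, if $\phi(\lambda\Id_6,(h_1,h_2)) = \Id_6$ then $\lambda h_1 = \Id_3$; reading off the $(3,3)$-entry forces $\lambda = 1$, whence $h_1 = h_2 = \Id_3$. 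For surjectivity, given $g = \begin{bmatrix} g_1 & 0 \\ 0 & g_2\end{bmatrix} \in M_{\psi_A}$ with parameter $a$, I would factor $g = \phi\big(a\Id_6, (a^{-1}g_1, a^{-1}g_2)\big)$ and check that $a^{-1}g_1 \in M_1$ (its last row becomes $(0,0,1)$) and $a^{-1}g_2 \in M_2$ (its first column becomes $(1,0,0)^{\top}$), both invertible since $a \ne 0$. This makes $\phi$ a bijective homomorphism, equivalently exhibiting $M_{\psi_A}$ as the internal direct product of the commuting subgroups $Z$ and $\iota(H_A)$, which meet only in $\Id_6$.

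The computation is short, so there is no serious obstacle; the one point requiring care is the bookkeeping of the shared scalar. The coupling between the two blocks of an element of $M_{\psi_A}$ is exactly the common value $a$ appearing in the $(3,3)$-entry of $g_1$ and the $(1,1)$-entry of $g_2$, and this is precisely the scalar contributed by the central factor $Z$. Getting the identification of $M_1$ and $M_2$ right relative to these two entries — that $M_1$ pins down the $(3,3)$-entry while the $w_0$-conjugate $M_2$ pins down the $(1,1)$-entry — is what guarantees both that $\phi$ is well defined and that the factorization $g = \phi\big(a\Id_6,(a^{-1}g_1,a^{-1}g_2)\big)$ is forced, giving uniqueness.
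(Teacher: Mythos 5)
Your proof is correct: the explicit map $\phi(\lambda \Id_6,(h_1,h_2)) = \begin{bmatrix} \lambda h_1 & 0 \\ 0 & \lambda h_2 \end{bmatrix}$, together with the identification of $M_2 = w_0 M_1^{\top} w_0^{-1}$ as the matrices with first column $(1,0,0)^{\top}$ and the observation that the shared parameter $a$ in Lemma~\ref{elements in M psi A} is exactly the central scalar, is precisely the routine verification the paper dismisses as ``Trivial.'' There is nothing to add; you have simply written out the details the authors omitted.
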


\begin{proof} Trivial. 
\end{proof}

\subsection{Character computation for $\rho$.} Let $\rho_{1}=\ind_{U_{1}}^{M_{1}}\mu_{1}$ and $\rho_{2}= \ind_{U_{2}}^{M_{2}}\mu_{2}$. In this section, we calculate the character of the representation \[\rho=\theta|_{F^{\times}} \otimes \ind_{U_{A}}^{H_{A}}\mu \simeq \theta|_{F^{\times}} \otimes (\rho_{1} \otimes \rho_{2}).\]

\subsubsection{Character computation of $\rho_1$} Let $\mu_{1}$ be same as above. Consider the representation \[\rho_{1}=  \ind_{U_{1}}^{M_{1}}\mu_{1} \] of $M_{1}$. Let $\chi_{\rho_{1}}$ be the character of $\rho_{1}$.
Let \[S_{1}= \left \{ \begin{bmatrix}
a & 0&0 \\
0& b &0 \\
0& 0& 1 \end{bmatrix} \mid a, b \in F^{\times} \right \}\]
 and \[ S_2= \left \{ \begin{bmatrix}
p & q & 0\\
r & 0 & 0\\
0 & 0& 1 \end{bmatrix} \mid  p \in F , q, r \in F^{\times} \right \}.\]
It is easy to see that $S=S_{1} \cup S_{2}$ is a set of left coset representatives of $U_{1}$ in $M_{1}$.

\begin{lemma}\label{lemma 1 in rho1 character calculation} Let \[m=\begin{bmatrix}
a_{11} & a_{12} & a_{13}\\
a_{21} & a_{22} & a_{23} \\
0 & 0 & 1 \end{bmatrix} \in M_{1}, t=\begin{bmatrix}
a & 0 & 0\\
0 & b & 0 \\
0 & 0 & 1  \end{bmatrix} \in S_{1}\]
Then, $t^{-1}mt \in U_{1}$ if and only if $a_{11}=a_{22}=1 ~\text{and}~ a_{21}=0.$ In particular, for $m\in M_{1}$ with $a_{11}=a_{22}=1$ and $a_{21}=0$, we have
\[ \sum_{t \in S_{1}}\mu_{1}(t^{-1}mt)=\sum_{a, b \in F^{\times}}\psi_0(a^{-1}ba_{12}+b^{-1}a_{23}).\]
\end{lemma}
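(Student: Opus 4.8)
The plan is to verify both assertions by a single direct computation of the conjugate $t^{-1}mt$, reading off the membership condition from its shape and then evaluating $\mu_1$ on it. The only tool needed is the elementary fact that conjugation by a diagonal matrix rescales matrix entries.

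First I would write $t=\mathrm{diag}(a,b,1)$, so $t^{-1}=\mathrm{diag}(a^{-1},b^{-1},1)$, and record that conjugation by such a $t$ sends the $(i,j)$ entry of $m$ to $d_i^{-1}d_j$ times that entry, where $(d_1,d_2,d_3)=(a,b,1)$. Carrying this out for $m\in M_1$ gives
\[t^{-1}mt=\begin{bmatrix} a_{11} & a^{-1}b\,a_{12} & a^{-1}a_{13}\\ b^{-1}a\,a_{21} & a_{22} & b^{-1}a_{23}\\ 0 & 0 & 1\end{bmatrix}.\]
Here the bottom row stays $(0,0,1)$ precisely because $m$ lies in the Mirabolic subgroup $M_1$, so the only entries that can obstruct membership in $U_1$ are the two nontrivial diagonal entries $a_{11},a_{22}$ and the single subdiagonal entry in position $(2,1)$.

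Next, since $U_1$ is exactly the upper-triangular unipotent subgroup, membership forces $a_{11}=a_{22}=1$ and $b^{-1}a\,a_{21}=0$. As $a,b\in F^{\times}$, the last equation is equivalent to $a_{21}=0$; the entries in positions $(3,1)$ and $(3,2)$ vanish automatically. This gives the claimed equivalence, and conversely, when $a_{11}=a_{22}=1$ and $a_{21}=0$ the displayed matrix is manifestly in $U_1$ for every choice of $a,b$.

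Finally, under these conditions the conjugate is an element of $U_1$ with $(1,2)$-entry $a^{-1}b\,a_{12}$ and $(2,3)$-entry $b^{-1}a_{23}$, so the definition $\mu_1(u)=\psi_0(u_{12}+u_{23})$ yields $\mu_1(t^{-1}mt)=\psi_0(a^{-1}b\,a_{12}+b^{-1}a_{23})$; summing over $t\in S_1$, i.e.\ over all $a,b\in F^{\times}$, produces the stated identity. I do not anticipate any real obstacle here: the argument is a bookkeeping computation, and the only point demanding care is getting the direction of the scaling under conjugation correct (so that $a_{12}$ picks up $a^{-1}b$ rather than $ab^{-1}$, and $a_{23}$ picks up $b^{-1}$).
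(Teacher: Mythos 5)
Your proposal is correct and follows essentially the same route as the paper: compute $t^{-1}mt$ directly (the diagonal conjugation scales the $(i,j)$ entry by $d_i^{-1}d_j$ with $(d_1,d_2,d_3)=(a,b,1)$), read off that membership in $U_1$ forces $a_{11}=a_{22}=1$ and $a_{21}=0$ since $a,b\in F^{\times}$, and then evaluate $\mu_1$ on the conjugate and sum over $a,b\in F^{\times}$. The only difference is expository — you make the entry-scaling rule and the automatic vanishing of the bottom row explicit, which the paper leaves implicit.
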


\begin{proof}
For $m\in M_{1}$ and $t\in S_{1}$, we have
\[t^{-1}mt=\begin{bmatrix}
a_{11} & a^{-1}b a_{12} & a^{-1}a_{13}\\
b^{-1}a a_{21} & a_{22} & b^{-1}a_{23} \\
0 & 0 & 1 \end{bmatrix}. \]
Thus it follows that $t^{-1}mt\in U_{1}$ if and only if $a_{11}=a_{22}=1$ and $a_{21}=0$. Clearly, we have 
\[ \sum_{t \in S_{1}}\mu_{1}(t^{-1}mt)=\sum_{a, b \in F^{\times}}\psi_0(a^{-1}ba_{12}+b^{-1}a_{23}).\]
Hence the result. 
\end{proof}

\begin{lemma}\label{lemma 2 in rho1 character calculation}
Let \[m=\begin{bmatrix}
a_{11} & a_{12} & a_{13}\\
a_{21} & a_{22} & a_{23} \\
0 & 0 & 1 \end{bmatrix} \in M_{1}~\text{and}~ t=\begin{bmatrix} p & q & 0\\
r & 0 & 0\\
0 & 0 & 1 \end{bmatrix} \in S_2.\] Suppose that $a_{21}=0$. Then, \[t^{-1}mt \in U_{1} ~\text{if and only if}~ a_{11}=a_{22}=1~\text{and}~ a_{12}=0.\]
In particular, for $m\in M_{1}$ with $a_{11}=a_{22}=1$ and $a_{21}=a_{12}=0$, we have
\[ \sum_{t \in S_{2}}\mu_{1}(t^{-1}mt)=\sum_{\substack{p \in F \\ r, q \in F^{\times}}}\psi_{0}(-pq^{-1}r^{-1}a_{23}+q^{-1}a_{13}).\]
\end{lemma}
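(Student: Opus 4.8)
The proof will be a direct conjugation computation; the only genuine content is organizing the matrix arithmetic and then verifying that the resulting membership condition is independent of the chosen representative $t\in S_2$. The plan is to invert $t$, conjugate $m$, read off the entries of $t^{-1}mt$, impose the condition that it lie in $U_1$, and finally evaluate $\mu_1$ on it.

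First I would invert $t$. The top-left $2\times 2$ block $\begin{bmatrix} p & q \\ r & 0\end{bmatrix}$ has determinant $-qr$, which is nonzero since $q,r\in F^{\times}$; inverting it while leaving the last coordinate fixed gives
\[t^{-1}=\begin{bmatrix} 0 & r^{-1} & 0 \\ q^{-1} & -pq^{-1}r^{-1} & 0 \\ 0 & 0 & 1\end{bmatrix}.\]
Next, writing $m=\begin{bmatrix} a_{11} & a_{12} & a_{13} \\ 0 & a_{22} & a_{23} \\ 0 & 0 & 1\end{bmatrix}$ (using the hypothesis $a_{21}=0$), I would form $mt$ and then $t^{-1}(mt)$ and read off all entries. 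I expect
\[t^{-1}mt=\begin{bmatrix} a_{22} & 0 & r^{-1}a_{23} \\ q^{-1}p(a_{11}-a_{22})+q^{-1}ra_{12} & a_{11} & q^{-1}a_{13}-pq^{-1}r^{-1}a_{23} \\ 0 & 0 & 1\end{bmatrix}.\]

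From this the equivalence is immediate: membership in $U_1$ forces the diagonal entries $a_{22}$ and $a_{11}$ to equal $1$ and the $(2,1)$-entry to vanish. The step to watch is precisely this $(2,1)$-entry: once $a_{11}=a_{22}=1$ is imposed, the coefficient $a_{11}-a_{22}$ of $p$ becomes zero, so the entry collapses to $q^{-1}ra_{12}$, which vanishes (as $q,r\in F^{\times}$) exactly when $a_{12}=0$. This both yields the asserted equivalence $t^{-1}mt\in U_1\iff a_{11}=a_{22}=1,\ a_{12}=0$ and, importantly, shows the condition is the same for every $t\in S_2$, so these representatives contribute uniformly to the induced character.

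Finally, for the ``in particular'' clause I would substitute $a_{11}=a_{22}=1$ and $a_{12}=a_{21}=0$ into the displayed conjugate, leaving $(1,2)$-entry $0$ and $(2,3)$-entry $q^{-1}a_{13}-pq^{-1}r^{-1}a_{23}$; by the defining formula for $\mu_1$ this gives $\mu_1(t^{-1}mt)=\psi_0(-pq^{-1}r^{-1}a_{23}+q^{-1}a_{13})$, and summing over $t\in S_2$ (that is, over $p\in F$ and $q,r\in F^{\times}$) produces the stated identity. There is no serious obstacle beyond the bookkeeping; the one subtle point, as noted, is correctly handling the $p$-coefficient $a_{11}-a_{22}$ in the off-diagonal entry before concluding the equivalence.
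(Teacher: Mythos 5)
Your proposal is correct and follows essentially the same route as the paper: compute $t^{-1}mt$ explicitly (your inverse of $t$ and the resulting conjugate agree entry-for-entry with the paper's displayed matrix), deduce the membership conditions $a_{11}=a_{22}=1$, $a_{12}=0$ from the diagonal and $(2,1)$ entries, and then evaluate $\mu_{1}$ on the conjugate and sum over $p\in F$, $q,r\in F^{\times}$. Your explicit handling of the $p$-coefficient $a_{11}-a_{22}$ in the $(2,1)$-entry is exactly the point implicit in the paper's argument, so there is nothing to correct.
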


\begin{proof} Let $m\in M_{1}$ and $t\in S_{2}$. If $a_{21}=0$, we have, 
\[t^{-1}mt=\begin{bmatrix} a_{22} & & 0 & & r^{-1}a_{23}\\ 
pq^{-1}(a_{11}-a_{22})+rq^{-1}a_{12} & & a_{11} & & q^{-1}a_{13}-pq^{-1}r^{-1}a_{23}\\
0 & & 0 & & 1 \end{bmatrix}. \]
Then, $t^{-1}mt \in U_{1}$ if and only if $a_{22}=a_{11}=1$ and $a_{12}=0$. Clearly, we have
\[ \sum_{t \in S_{2}}\mu_{1}(t^{-1}mt)=\sum_{\substack{p \in F \\ r, q \in F^{\times}}}\psi_{0}(-pq^{-1}r^{-1}a_{23}+q^{-1}a_{13}).\]
Hence the result. 
\end{proof}

\begin{lemma}\label{lemma 3 in rho1 character calculation}
Let \[m=\begin{bmatrix}
a_{11} & a_{12} & a_{13}\\
a_{21} & a_{22} & a_{23} \\
0 & 0 & 1 \end{bmatrix} \in M_{1} ~\text{and}~ t=\begin{bmatrix}p & q & 0\\
r & 0 & 0\\
0 & 0 & 1 \end{bmatrix} \in S_2.\] 
Suppose that $a_{21} \neq 0$.
\begin{enumerate}
\item[a)] If $p=0$, then $t^{-1}mt \in U_{1}$ if and only if $a_{11}=a_{22}=1$ and $a_{12}=0$. In particular, we have 
\[\sum_{t\in S_{2}}\mu_{1}(t^{-1}mt)= \sum_{r,q\in F^{\times}}\psi_{0}(qr^{-1}a_{21}+ q^{-1}a_{13}). \]
\item[b)] If $p \neq 0$, then 
$t^{-1}mt \in U_{1}$ if and only if $a_{11}+a_{22}=2$, $a_{12}=\frac{-(a_{11}-1)^{2}}{a_{21}}$ and $r=\big (\frac{pa_{21}}{a_{11}-1}\big )$. In particular, we have 
\[\sum_{t \in S_{2}}\mu_{1}(t^{-1}mt)= -\sum_{\substack{q \in  F^{\times}}} \psi_{0}(q^{-1}(-\delta + a_{13})),\]
where $\delta= a_{21}^{-1}a_{23}(a_{11}-1)$.
\end{enumerate}
\end{lemma}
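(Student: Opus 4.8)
The plan is to form the conjugate $t^{-1}mt$ explicitly, read off exactly when it lies in $U_{1}$, and then evaluate the resulting exponential sum. Since $\det t=-qr$, one has
\[t^{-1}=\begin{bmatrix} 0 & r^{-1} & 0\\ q^{-1} & -pq^{-1}r^{-1} & 0\\ 0 & 0 & 1\end{bmatrix},\]
and multiplying out gives
\[t^{-1}mt=\begin{bmatrix} a_{22}+a_{21}pr^{-1} & a_{21}qr^{-1} & r^{-1}a_{23}\\ \ast & a_{11}-a_{21}pr^{-1} & q^{-1}a_{13}-pq^{-1}r^{-1}a_{23}\\ 0 & 0 & 1\end{bmatrix},\]
where the $(2,1)$-entry is $\ast=pq^{-1}(a_{11}-a_{22})+rq^{-1}a_{12}-p^{2}q^{-1}r^{-1}a_{21}$. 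Thus $t^{-1}mt\in U_{1}$ is equivalent to the three scalar conditions $a_{22}+a_{21}pr^{-1}=1$, $a_{11}-a_{21}pr^{-1}=1$, and $\ast=0$.

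For part (a), setting $p=0$ makes the first two conditions read $a_{22}=1$ and $a_{11}=1$, and the third collapses to $rq^{-1}a_{12}=0$, i.e.\ $a_{12}=0$ (as $q,r\neq 0$); conversely these force membership. Under them the $(1,2)$- and $(2,3)$-entries are $a_{21}qr^{-1}$ and $q^{-1}a_{13}$, so $\mu_{1}(t^{-1}mt)=\psi_{0}(qr^{-1}a_{21}+q^{-1}a_{13})$, and summing over $q,r\in F^{\times}$ produces the stated formula. (Note that when $a_{11}=1$ no coset with $p\neq 0$ can contribute, by part (b), so this is indeed the full $S_{2}$-contribution.)

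For part (b), I would first add the $(1,1)$- and $(2,2)$-conditions to get $a_{11}+a_{22}=2$; the $(2,2)$-condition then reads $a_{21}pr^{-1}=a_{11}-1$, which forces $a_{11}\neq 1$ and $r=pa_{21}/(a_{11}-1)$. Substituting $a_{11}-a_{22}=2(a_{11}-1)$ and $p^{2}r^{-1}a_{21}=p(a_{11}-1)$ into $\ast=0$ reduces it to $ra_{12}=-p(a_{11}-1)$, and inserting the value of $r$ yields $a_{12}=-(a_{11}-1)^{2}/a_{21}$, giving the stated criterion. For such $m$, each pair $(p,q)\in F^{\times}\times F^{\times}$ determines a unique admissible $r$, so the sum ranges over these pairs.

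It then remains to evaluate the sum. Substituting $r=pa_{21}/(a_{11}-1)$ into the $(1,2)$- and $(2,3)$-entries gives
\[\mu_{1}(t^{-1}mt)=\psi_{0}\!\left(\frac{q(a_{11}-1)}{p}+q^{-1}(a_{13}-\delta)\right),\qquad \delta=a_{21}^{-1}a_{23}(a_{11}-1),\]
matching the stated $\delta$. For fixed $q$, as $p$ runs over $F^{\times}$ the quantity $q(a_{11}-1)/p$ runs bijectively over $F^{\times}$ (since $q(a_{11}-1)\neq 0$), so the inner sum over $p$ equals $\sum_{u\in F^{\times}}\psi_{0}(u)=-1$. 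Factoring out the $q$-dependent term then leaves $-\sum_{q\in F^{\times}}\psi_{0}(q^{-1}(-\delta+a_{13}))$, as claimed. The most delicate step is the bookkeeping in forming $t^{-1}mt$ and propagating the substitution for $r$ correctly through $\ast$; once that is done, the final identity is immediate from $\sum_{u\in F^{\times}}\psi_{0}(u)=-1$.
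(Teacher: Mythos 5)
Your proposal is correct and follows essentially the same route as the paper: compute $t^{-1}mt$ explicitly (your matrix agrees entry-for-entry with the paper's), read off the conditions for membership in $U_{1}$, and evaluate the resulting character sum. The only cosmetic differences are that you derive $a_{12}=-(a_{11}-1)^{2}/a_{21}$ directly from the vanishing of the $(2,1)$-entry while the paper invokes $\det(t^{-1}mt)=1$, and that your explicit evaluation of the sum over $p$ (producing the factor $-1$ via $\sum_{u\in F^{\times}}\psi_{0}(u)=-1$) spells out a step the paper leaves implicit.
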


\begin{proof}
Let \[m=\begin{bmatrix}
a_{11} & a_{12} & a_{13}\\
a_{21} & a_{22} & a_{23} \\
0 & 0 & 1 \end{bmatrix} \in M_{1}\] and suppose that $a_{21} \neq 0$. In case a), since $p =0$, we have 
\[t^{-1}mt=\begin{bmatrix} a_{22} & qr^{-1}a_{21} & r^{-1}a_{23}\\
rq^{-1}a_{12} & a_{11} & q^{-1}a_{13}\\
0 & 0 & 1 \end{bmatrix}.\]
Thus it follows that $t^{-1}mt\in U_{1}$ if and only if $a_{22}=a_{11}=1$ and $a_{12}=0$. In particular, we have 
\[\sum_{t\in S_{2}}\mu_{1}(t^{-1}mt)= \sum_{r,q\in F^{\times}}\psi_{0}(qr^{-1}a_{21}+ q^{-1}a_{13}). \]
In case b), since $p \neq 0$, we have 
\[t^{-1}mt=\begin{bmatrix} a_{22}+pr^{-1}a_{21} & qr^{-1}a_{21} & r^{-1}a_{23}\\
pq^{-1}(a_{11}-a_{22})+rq^{-1}a_{12}-p^2q^{-1}r^{-1}a_{21} & a_{11}-pr^{-1}a_{21} & q^{-1}a_{13}-pq^{-1}r^{-1}a_{23}\\
0 & 0 & 1 \end{bmatrix}.\]
Clearly, we have $t^{-1}mt\in U_{1}$ if and only if
$a_{11}=1+pr^{-1}a_{21}$, $a_{22}=1-pr^{-1}a_{21}$ and $pq^{-1}(a_{11}-a_{22})+rq^{-1}a_{12}-p^2q^{-1}r^{-1}a_{21}=0$. \\

Using $a_{11}-a_{22}=2pr^{-1}a_{21}$, $a_{11}+a_{22}=2$ and $\det(t^{-1}mt)=1$, it follows that $a_{12}=\frac{-(a_{11}-1)^{2}}{a_{21}}$ and $r=\big (\frac{pa_{21}}{a_{11}-1}\big )$.\\

 In particular, taking $\delta= a_{21}^{-1}a_{23}(a_{11}-1)$ we have 
\begin{align*}
\sum_{t \in S_{2}}\mu_1(t^{-1}mt)&= -\sum_{\substack{q \in  F^{\times}}} \psi_0(q^{-1}(-{a_{21}}^{-1}a_{23}(a_{11}-1)+a_{13}))\\
&= -\sum_{\substack{q \in  F^{\times}}} \psi_{0}(q^{-1}(-\delta + a_{13})). 
\end{align*}
Hence the result.
\end{proof}

\noindent We summarize the character values of $\rho_{1}$ in the table below.

\begin{center}
\captionof{table}{ Character of $\rho_1$}
\label{Character of rho1}

\scriptsize
\resizebox{1.2\textwidth}{!}{
    \begin{tabular}{| c | c | c | c | c | c |}
    \hline
    & &    & & & \\
   Type of $m$ & $m$ & $\chi_{\rho_1}(m)$ &  Type of $m$ & $m$ & $\chi_{\rho_1}(m)$\\
   & & & & &\\
   \hline 
   
   & & & & & \\
 Type-1 & $\left \{\begin{bmatrix} 1 & a_{12} & a_{13}  \\ 0 & 1 & 0\\
0 & 0 & 1 \end{bmatrix} \,\middle \vert\, a_{12} \in F^{\times} \right \}$ & $(1-q)$ & Type-6 & $\left \{\begin{bmatrix} 1 & 0 & a_{13}  \\ 0 & 1 & a_{23}\\
0 & 0 & 1 \end{bmatrix} \,\middle \vert\, a_{13},a_{23} \in F^{\times} \right \} $  & $(1-q)$\\
& & & & &\\
\hline
& & & & &\\
Type-2 & $\left \{ \begin{bmatrix} 1 & a_{12} & a_{13}\\
         0 & 1 & a_{23}\\
         0 & 0 & 1
         \end{bmatrix} \,\middle \vert\, a_{12},a_{23} \in F^{\times}\right \} $  & $1$ & Type-7  & $\left \{\begin{bmatrix} 1 & 0 & 0\\
         a_{21} & 1 & a_{23}\\
         0 & 0 & 1
         \end{bmatrix} \,\middle \vert\, a_{21} \in F^{\times} \right\} $ & $(1-q)$ \\
& & & & & \\
\hline
& & & & &\\
         Type-3 & $ \left \{ \begin{bmatrix} 1 & 0 & 0\\
         0 & 1 & a_{23}\\
         0 & 0 & 1
         \end{bmatrix} \,\middle \vert\, a_{23} \in F^{\times} \right \} $ & $(1-q)$ & Type-8 &$\left \{\begin{bmatrix} 1 & 0 & a_{13}\\
         a_{21} & 1 & a_{23}\\
         0 & 0 & 1
         \end{bmatrix} \,\middle \vert\, a_{21},a_{13} \in F^{\times} \right \}$ & $1$\\
         & & & & & \\
         \hline 
         & & & & &\\
         Type-4 & $ \left \{\begin{bmatrix} 1 & 0 & 0\\
         0 & 1 & 0\\
         0 & 0 & 1
         \end{bmatrix} \right \} $ & $(1-q)(1-q^2)$ & Type-9 & $\left \{\begin{bmatrix} a_{11} & a_{12} & a_{13}\\
         a_{21} & a_{22} & a_{23}\\
         0 & 0 & 1
         \end{bmatrix} \,\middle\vert\, \stackrel{a_{21} \in F^{\times},\, a_{13}= \delta}{a_{12}=-{a_{21}}^{-1}{(a_{11}-1)}^2}  \right \}$ & $(1-q)$\\
           &   & & & & \\
           \hline 
         & & & & &\\
        Type-5 & $\left \{ \begin{bmatrix} 1 & 0 & a_{13} \\ 0 & 1 & 0\\
0 & 0 & 1 \end{bmatrix} \,\middle \vert\, a_{13} \in F^{\times} \right \}$ & $(1-q)$ & Type-10 & $\left \{\begin{bmatrix} a_{11} & a_{12} & a_{13}\\
         a_{21} & a_{22} & a_{23}\\
         0 & 0 & 1
         \end{bmatrix} \,\middle \vert\, \stackrel{a_{21} \in F^{\times},\, a_{13} \neq \delta }{a_{12}=-{a_{21}}^{-1}{(a_{11}-1)}^2} \right \}$ & $1$\\
           &   & & & & \\
           \hline

    \end{tabular} 
    }
\end{center}
If $m\in M_{1}$ is not one of the types mentioned in Table~\ref{Character of rho1}, then $\chi_{\rho_1}(m)=0$.

\subsubsection{Character computation of $\rho_2$}
Let $\mu_{2}$ be same as above. Consider the representation 
\[ \rho_{2}=\ind_{U_{2}}^{M_{2}}\mu_{2}\]
of $M_{2}.$ Let $\chi_{\rho_{2}}$ be the character of $\rho_{2}$. 
Let \[S_{3}= \left \{ \begin{bmatrix}
1& 0 &0 \\
0& a &0 \\
0& 0 & b \end{bmatrix} \mid a, b \in F^{\times} \right \}\]
 and \[ S_{4} = \left \{ \begin{bmatrix}
1 & 0 & 0\\
0 & p & q\\
0 & r & 0 \end{bmatrix} \mid  p \in F , q, r \in F^{\times} \right \}.\]
It is easy to see that $S=S_ {3}  \cup S_{4}$ is a set of left coset representatives of $U_{2}$ in $M_{2}$. 

\begin{lemma} Let \[m=\begin{bmatrix}
1 & y_{12} & y_{13}\\
0 & y_{22} & y_{23} \\
0 & y_{32} & y_{33} \end{bmatrix} \in M_{2}, t=\begin{bmatrix}
1 & 0 & 0\\
0 & a & 0 \\
0 & 0 & b  \end{bmatrix} \in S_{3}.\]
Then, $t^{-1}mt \in U_{2}$ if and only if $y_{22}=y_{33}=1$ and $y_{32}=0$. In particular, for $m\in M_{2}$ with $y_{22}=y_{33}=1$ and $y_{32}=0$, we have
\[ \sum_{t \in S_{3}}\mu_{2}(t^{-1}mt)=\sum_{a, b \in F^{\times}}\psi_0(ay_{12}+ ba^{-1}y_{23}).\]
\end{lemma}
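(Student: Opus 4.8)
The plan is to prove both assertions by a single direct matrix computation, exactly parallel to the $S_1$ computation carried out for $\rho_1$ in Lemma~\ref{lemma 1 in rho1 character calculation}. First I would compute the conjugate $t^{-1}mt$ explicitly. Since $t=\mathrm{diag}(1,a,b)$ is diagonal with inverse $\mathrm{diag}(1,a^{-1},b^{-1})$, conjugation fixes the diagonal entries of $m$ and scales each off-diagonal entry by the ratio of the corresponding torus eigenvalues. Carrying out the product gives
\[ t^{-1}mt = \begin{bmatrix} 1 & a y_{12} & b y_{13} \\ 0 & y_{22} & a^{-1}b\, y_{23} \\ 0 & ab^{-1} y_{32} & y_{33} \end{bmatrix}.\]

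Next I would read off the membership condition from this matrix. The subgroup $U_{2}$ is the upper-triangular unipotent subgroup of $\GL(3,F)$, so $t^{-1}mt\in U_{2}$ forces the two diagonal entries $y_{22}$ and $y_{33}$ to equal $1$ and the strictly lower entry $ab^{-1}y_{32}$ to vanish. As $a,b\in F^{\times}$, the latter is equivalent to $y_{32}=0$; the remaining lower entries are already zero because $m\in M_{2}$ has first column $(1,0,0)^{\top}$. This yields the stated equivalence that $t^{-1}mt\in U_{2}$ if and only if $y_{22}=y_{33}=1$ and $y_{32}=0$.

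Finally, assuming these conditions hold, the conjugate reduces to the unipotent matrix whose $(1,2)$-entry is $ay_{12}$ and whose $(2,3)$-entry is $a^{-1}b\,y_{23}$, so by the definition of $\mu_{2}$ we obtain $\mu_{2}(t^{-1}mt)=\psi_0(ay_{12}+a^{-1}b\,y_{23})$. Summing over the coset representatives $t\in S_{3}$, which are indexed by $(a,b)\in (F^{\times})^{2}$, gives the claimed formula. The argument presents no genuine obstacle; the only points requiring care are correctly identifying $U_{2}$ as the upper-triangular unipotent subgroup and tracking precisely which entries the diagonal conjugation pushes outside $U_{2}$, both of which are settled at once by the explicit matrix above.
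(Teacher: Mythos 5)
Your proposal is correct and follows essentially the same route as the paper, which proves this lemma by declaring it ``similar to'' the corresponding statement for $\rho_{1}$, namely an explicit computation of $t^{-1}mt$ for diagonal $t$, reading off the membership conditions $y_{22}=y_{33}=1$, $ab^{-1}y_{32}=0$ (hence $y_{32}=0$ since $a,b\in F^{\times}$), and then evaluating $\mu_{2}(t^{-1}mt)=\psi_{0}(ay_{12}+a^{-1}by_{23})$ and summing over $(a,b)\in (F^{\times})^{2}$. Your matrix for $t^{-1}mt$ and the observation that the entries $(2,1)$, $(3,1)$ vanish automatically because $m\in M_{2}$ has first column $(1,0,0)^{\top}$ match the intended argument exactly.
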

\begin{proof} Similar to Lemma~\ref{lemma 1 in rho1 character calculation}. 
\end{proof}

\begin{lemma}
Let \[m=\begin{bmatrix}
1 & y_{12} & y_{13}\\
0 & y_{22} & y_{23} \\
0 & y_{32} & y_{33} \end{bmatrix} \in M_{2}~ \text{and}~ t=\begin{bmatrix}1 & 0 & 0\\
0 & p & q\\
0 & r & 0 \end{bmatrix} \in S_{4}.\] Suppose that $y_{32}=0$. Then, \[t^{-1}mt \in U_{2}~ \text{if and only if}~ y_{22}=y_{33}=1~ \text{and}~ y_{23}=0.\]
In particular, for $m\in M_{2}$ with $y_{22}=y_{33}=1$ and $y_{32}=y_{23}=0$, we have 
\[ \sum_{t \in S_{4}}\mu_{2}(t^{-1}mt)=\sum_{\substack{p \in F \\ r, q \in F^{\times}}}\psi_{0}(py_{12}+ry_{13}).\]
\end{lemma}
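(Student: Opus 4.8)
The plan is to prove both assertions by a single explicit computation of the conjugate $t^{-1}mt$, exactly mirroring the argument for $\rho_{1}$ in Lemma~\ref{lemma 2 in rho1 character calculation}, but with the roles of the two off-diagonal corners interchanged, since $M_{2}=w_{0}M_{1}^{\top}w_{0}^{-1}$ is the transpose-type mirabolic. First I would record $t^{-1}$ for $t\in S_{4}$: the lower-right $2\times 2$ block of $t$ has determinant $-qr$, so
\[t^{-1}=\begin{bmatrix} 1 & 0 & 0 \\ 0 & 0 & r^{-1} \\ 0 & q^{-1} & -pq^{-1}r^{-1} \end{bmatrix}.\]
Multiplying out $t^{-1}mt$ and using the hypothesis $y_{32}=0$, a direct computation gives
\[t^{-1}mt=\begin{bmatrix} 1 & py_{12}+ry_{13} & qy_{12} \\ 0 & y_{33} & 0 \\ 0 & q^{-1}\big(p(y_{22}-y_{33})+ry_{23}\big) & y_{22} \end{bmatrix}.\]

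From this normal form both claims read off directly. The matrix lies in $U_{2}$ precisely when its three diagonal entries equal $1$ and its single nonzero sub-diagonal slot, the $(3,2)$ entry, vanishes. The diagonal conditions force $y_{22}=y_{33}=1$, and substituting these collapses the $(3,2)$ entry to $q^{-1}ry_{23}$; since $q,r\in F^{\times}$, this vanishes exactly when $y_{23}=0$. This establishes the stated equivalence $t^{-1}mt\in U_{2}\iff y_{22}=y_{33}=1 \text{ and } y_{23}=0$.

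For the character sum I would then impose $y_{22}=y_{33}=1$ and $y_{32}=y_{23}=0$, so that
\[t^{-1}mt=\begin{bmatrix} 1 & py_{12}+ry_{13} & qy_{12} \\ 0 & 1 & 0 \\ 0 & 0 & 1 \end{bmatrix}.\]
Its super-diagonal entries are $py_{12}+ry_{13}$ in position $(1,2)$ and $0$ in position $(2,3)$, so by the definition of $\mu_{2}$ we get $\mu_{2}(t^{-1}mt)=\psi_{0}(py_{12}+ry_{13})$. Summing over $t\in S_{4}$, that is over $p\in F$ and $q,r\in F^{\times}$, yields exactly the asserted $\sum_{p\in F,\,q,r\in F^{\times}}\psi_{0}(py_{12}+ry_{13})$.

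There is no genuine obstacle here; the computation is routine and parallel to the $\rho_{1}$ case. The only point that needs care is the bookkeeping in the $(3,2)$ entry of $t^{-1}mt$, which is what forces the condition $y_{23}=0$ (the analogue of the $a_{12}=0$ condition in Lemma~\ref{lemma 2 in rho1 character calculation}). I would also flag that the $(1,3)$ entry $qy_{12}$ never enters $\mu_{2}$, which is precisely why the summand is independent of $q$ and the sum factors cleanly.
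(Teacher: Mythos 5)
Your computation is correct: the conjugate $t^{-1}mt$, the resulting equivalence, and the character sum all check out, and this is exactly the approach the paper takes (its proof is simply ``Similar to Lemma~\ref{lemma 2 in rho1 character calculation}'', which performs the same explicit conjugation computation in the $\rho_{1}$ setting). Your write-up just makes explicit what the paper leaves to the mirror-image analogy with the mirabolic $M_{1}$.
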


\begin{proof} Similar to Lemma~\ref{lemma 2 in rho1 character calculation}. 
\end{proof}

\begin{lemma}
Let \[m=\begin{bmatrix}
1 & y_{12} & y_{13}\\
0 & y_{22} & y_{23} \\
0 & y_{32} & y_{33} \end{bmatrix} \in M_{2}~\text{and}~ t=\begin{bmatrix}1 & 0 & 0\\
0 & p & q\\
0 & r & 0 \end{bmatrix} \in S_{4}. \]
Suppose that $y_{32} \neq 0$.
\begin{enumerate}
\item[a)] If $p=0$, then $t^{-1}mt \in U_{2}$ if and only if $y_{22}=y_{33}=1$ and $y_{23}=0$. In particular, we have 
\[\sum_{t\in S_{4}}\mu_{2}(t^{-1}mt)= \sum_{r,q\in F^{\times}}\psi_{0}(qr^{-1}y_{32}+ ry_{13}). \]
\item[b)] If $p \neq 0$, then $t^{-1}mt \in U_{2}$ if and only if $y_{22}+y_{33}=2$, $y_{23}=-\frac{(y_{22}-1)^2}{{y_{32}}}$ and $r={\left(\frac{y_{32}p}{y_{22}-1}\right)}$. In particular, we have 
\[\sum_{t \in S_{4}}\mu_{2}(t^{-1}mt)= -\sum_{\substack{p \in  F^{\times}}} \psi_{0}(p(\gamma + y_{12})),\]
where $\gamma=y_{32}y_{13}{(y_{22}-1)}^{-1}$.
\end{enumerate}
\end{lemma}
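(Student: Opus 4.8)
The plan is to establish both parts by a direct matrix computation, exactly parallel to the proof of Lemma~\ref{lemma 3 in rho1 character calculation} for $\rho_1$; indeed $(M_2,U_2,\mu_2)$ is obtained from $(M_1,U_1,\mu_1)$ by conjugating transposes by $w_0$, so the $\rho_2$ computation is the mirror image of the $\rho_1$ one and one could alternatively transport Lemma~\ref{lemma 3 in rho1 character calculation} across this symmetry. I will instead compute directly. First I would invert $t\in S_4$: its lower-right block $\begin{bmatrix} p & q \\ r & 0\end{bmatrix}$ has determinant $-qr\neq 0$, giving
\[ t^{-1} = \begin{bmatrix} 1 & 0 & 0 \\ 0 & 0 & r^{-1} \\ 0 & q^{-1} & -pq^{-1}r^{-1} \end{bmatrix}. \]
A direct multiplication then yields
\[ t^{-1}mt = \begin{bmatrix} 1 & y_{12}p + y_{13}r & y_{12}q \\ 0 & y_{33} + pr^{-1}y_{32} & y_{32}qr^{-1} \\ 0 & pq^{-1}(y_{22}-y_{33}) + q^{-1}ry_{23} - p^2q^{-1}r^{-1}y_{32} & y_{22} - pr^{-1}y_{32} \end{bmatrix}, \]
and membership in $U_2$ is read off by demanding that the $(2,2)$ and $(3,3)$ entries equal $1$ and the $(3,2)$ entry vanish.

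For part a) I set $p=0$. The diagonal conditions collapse to $y_{33}=1$ and $y_{22}=1$, and the $(3,2)$ entry becomes $q^{-1}ry_{23}$, which vanishes (for $q,r\in F^{\times}$) precisely when $y_{23}=0$; this is the stated equivalence. Under these conditions the relevant off-diagonal entries are $y_{13}r$ in position $(1,2)$ and $y_{32}qr^{-1}$ in position $(2,3)$, so $\mu_2(t^{-1}mt)=\psi_0(y_{13}r+y_{32}qr^{-1})$, and summing over the free parameters $q,r\in F^{\times}$ gives the claimed formula.

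For part b) I take $p\neq 0$. Adding the two diagonal conditions forces $y_{22}+y_{33}=2$, and either one then reads $pr^{-1}y_{32}=y_{22}-1$; since $p,y_{32}\neq 0$ this already forces $y_{22}\neq 1$ and pins down $r=py_{32}/(y_{22}-1)$. Substituting $y_{22}-y_{33}=2(y_{22}-1)$ and $p^2r^{-1}y_{32}=p(y_{22}-1)$ into the $(3,2)$ equation reduces it to $ry_{23}=-p(y_{22}-1)$, which upon inserting the value of $r$ becomes $y_{23}=-(y_{22}-1)^2/y_{32}$, as claimed. With $r$ so determined one finds $y_{13}r=p\gamma$ and $y_{32}qr^{-1}=q(y_{22}-1)p^{-1}$, so $\mu_2(t^{-1}mt)=\psi_0\bigl(p(y_{12}+\gamma)+q(y_{22}-1)p^{-1}\bigr)$; summing over $p,q\in F^{\times}$ and factoring, the inner sum $\sum_{q\in F^{\times}}\psi_0\bigl(q(y_{22}-1)p^{-1}\bigr)$ equals $-1$ because $(y_{22}-1)p^{-1}\neq 0$ and $\sum_{x\in F}\psi_0(x)=0$. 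This produces the overall factor $-1$ and the stated sum over $p\in F^{\times}$.

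The only real subtlety, and the step I would treat most carefully, is the reduction of the $(3,2)$-entry equation in part b): one must use both diagonal relations simultaneously (via $y_{22}+y_{33}=2$ and $pr^{-1}y_{32}=y_{22}-1$) to see that the quadratic-looking expression collapses to the single linear constraint on $y_{23}$, and one must note that $y_{22}\neq 1$ is automatic so that $r$ and $\gamma$ are well defined. After that, the collapse of the $q$-sum to $-1$ is the standard nontrivial-character identity and is routine.
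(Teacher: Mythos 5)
Your computation is correct and is exactly what the paper intends: its proof of this lemma is simply ``Similar to Lemma~\ref{lemma 3 in rho1 character calculation},'' i.e.\ the same direct conjugation computation you carry out explicitly, with the inverse of $t\in S_{4}$, the three membership conditions read off the $(2,2)$, $(3,3)$, $(3,2)$ entries, and the collapse of the $q$-sum to $-1$ via the nontrivial-character identity. Your verification of part b), including the forced condition $y_{22}\neq 1$ and the reduction $ry_{23}=-p(y_{22}-1)$, matches the paper's argument for $\rho_{1}$ transported to $\rho_{2}$, so there is nothing to correct.
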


\begin{proof} Similar to Lemma~\ref{lemma 3 in rho1 character calculation}. 
\end{proof}
We record the character values of $\rho_{2}$ in the following table.

\begin{center}
\captionof{table}{ Character of $\rho_2$}
\label{Character of rho2}

\scriptsize
\resizebox{1.2\textwidth}{!}{
    \begin{tabular}{| c | c | c | c | c | c |}
    \hline
    & &    & & & \\
   Type of $m$ & $m$ & $\chi_{\rho_2}(m)$ &  Type of $m$ & $m$ & $\chi_{\rho_2}(m)$\\
   & & & & &\\
   \hline 
   
   & & & & & \\
 Type-1 & $\left \{\begin{bmatrix} 1 & 0 & y_{13}  \\ 0 & 1 & y_{23}\\
0 & 0 & 1 \end{bmatrix} \,\middle \vert\, y_{23} \in F^{\times} \right \}$ & $(1-q)$ & Type-6 & $\left \{\begin{bmatrix} 1 & y_{12} & y_{13}  \\ 0 & 1 & 0\\
0 & 0 & 1 \end{bmatrix} \,\middle \vert\, y_{12},y_{13} \in F^{\times} \right \} $  & $(1-q)$\\
& & & & &\\
\hline
& & & & &\\
Type-2 & $\left \{ \begin{bmatrix} 1 & y_{12} & y_{13}\\
         0 & 1 & y_{23}\\
         0 & 0 & 1
         \end{bmatrix} \,\middle \vert\, y_{12},y_{23} \in F^{\times}\right \} $  & $1$ & Type-7  & $\left \{\begin{bmatrix} 1 & y_{12} & 0\\
         0 & 1 & 0\\
         0 & y_{32} & 1
         \end{bmatrix} \,\middle \vert\, y_{32} \in F^{\times} \right\} $ & $(1-q)$ \\
& & & & & \\
\hline
& & & & &\\
         Type-3 & $ \left \{ \begin{bmatrix} 1 & y_{12} & 0\\
         0 & 1 & 0\\
         0 & 0 & 1
         \end{bmatrix} \,\middle \vert\, y_{12} \in F^{\times} \right \} $ & $(1-q)$ &  Type-8 & $\left \{\begin{bmatrix} 1 & y_{12} & y_{13}\\
         0 & 1 & 0\\
         0 & y_{32} & 1
         \end{bmatrix} \,\middle \vert\, y_{13},y_{32} \in F^{\times} \right \}$ & $1$\\
         & & & & & \\
         \hline 
         & & & & &\\
         Type-4 & $ \left \{\begin{bmatrix} 1 & 0 & 0\\
         0 & 1 & 0\\
         0 & 0 & 1
         \end{bmatrix} \right \} $ & $(1-q)(1-q^2)$ & Type-9 & $\left \{\begin{bmatrix} 1 & y_{12} & y_{13}\\
         0 & y_{22} & y_{23}\\
         0 & y_{32} & y_{33}
         \end{bmatrix} \,\middle \vert\, \stackrel{ y_{32} \in F^{\times},\, y_{12}=-\gamma,}{y_{23}=-{y_{32}}^{-1}{(y_{22}-1)}^2} \right \}$ & $(1-q)$ \\
           &   & & & & \\
           \hline
            & & & & &\\
        Type-5 & $\left \{ \begin{bmatrix} 1 & 0 & y_{13} \\ 0 & 1 & 0\\
0 & 0 & 1 \end{bmatrix} \,\middle \vert\, y_{13} \in F^{\times} \right \} $ & $(1-q)$ & Type-10 & $\left \{\begin{bmatrix} 1 & y_{12} & y_{13}\\
         0 & y_{22} & y_{23}\\
         0 & y_{32} & y_{33}
         \end{bmatrix} \,\middle \vert\, \stackrel{ y_{32} \in F^{\times}, \, y_{12} \neq -\gamma,}{y_{23}=-{y_{32}}^{-1}{(y_{22}-1)}^2} \right \}$ & $1$\\
           &   & & & & \\
           \hline
           
    \end{tabular} 
    }
\end{center}

\vspace{0.2 cm}
\noindent If $m\in M_{2}$ is not one of the types mentioned above, we have $\chi_{\rho_2}(m)=0$.\\

For $1\leq i,j\leq 10$, we let \[T(i,j)=\{k=(m_{1}, m_{2})\in H_{A} \mid m_{1}\in \Type-i, m_{2}\in \Type-j\}. \] 

\begin{theorem}
Let $\rho=\theta|_{F^{\times}} \otimes \rho_1 \otimes \rho_2$. 
Let $\chi_{\rho}$ be the character of $\rho$. For $m=(a,m_1,m_2) \in Z \times M_1 \times M_2$, we have 
\[\chi_{\rho}(m)=\theta(a)\chi_{\rho_1}(m_1)\chi_{\rho_2}(m_2)\]
where $(m_{1}, m_{2}) \in T(i,j)$, $i,j \in \{ 1 ,\hdots, 10\}.$ Otherwise, $\chi_{\rho}(m)=0$.
\end{theorem}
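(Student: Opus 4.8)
The plan is to recognize $\rho$ as an (external) tensor product of representations of the three factors of $M_{\psi_A}\simeq Z\times M_1\times M_2$ and then invoke the multiplicativity of characters under tensor products. By the lemma identifying $M_{\psi_A}$ with $Z\times H_A=Z\times M_1\times M_2$, every element of $M_{\psi_A}$ can be written uniquely as $m=(a,m_1,m_2)$ with $a\in Z\simeq F^{\times}$, $m_1\in M_1$, and $m_2\in M_2$. Under this identification the representation $\rho=\theta|_{F^{\times}}\otimes\rho_1\otimes\rho_2$ is precisely the outer tensor product $\theta|_{F^{\times}}\boxtimes\rho_1\boxtimes\rho_2$, in which the factor $Z$ acts through the one-dimensional representation $\theta|_{F^{\times}}$, the factor $M_1$ acts through $\rho_1$, and the factor $M_2$ acts through $\rho_2$.

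The key step is the standard fact that the character of an outer tensor product of representations of a direct product of groups is the product of the characters of the factors, evaluated at the respective components. Applying this yields
\[\chi_{\rho}(m)=\chi_{\theta|_{F^{\times}}}(a)\,\chi_{\rho_1}(m_1)\,\chi_{\rho_2}(m_2).\]
Since $\theta|_{F^{\times}}$ is one-dimensional, its character at $a$ is simply $\theta(a)$, and hence $\chi_{\rho}(m)=\theta(a)\,\chi_{\rho_1}(m_1)\,\chi_{\rho_2}(m_2)$, as claimed.

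For the vanishing assertion I would appeal to Tables~\ref{Character of rho1} and~\ref{Character of rho2}, which between them exhaust exactly those $m_1\in M_1$ (respectively $m_2\in M_2$) at which $\chi_{\rho_1}$ (respectively $\chi_{\rho_2}$) is nonzero; at every other element the character vanishes. Consequently the product $\chi_{\rho_1}(m_1)\chi_{\rho_2}(m_2)$ is nonzero only when $m_1$ is of Type-$i$ and $m_2$ is of Type-$j$ for some $i,j\in\{1,\dots,10\}$, that is, only when $(m_1,m_2)\in T(i,j)$; in all remaining cases $\chi_{\rho}(m)=0$.

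There is no serious obstacle here: the entire computational content has already been carried out in producing the two character tables for $\rho_1$ and $\rho_2$, and the present statement is a formal consequence of the behaviour of characters under external tensor products. The only point requiring care is to use the decomposition $M_{\psi_A}\simeq Z\times M_1\times M_2$ consistently, so that the scalar block $a$, the $M_1$-block, and the $M_2$-block are each fed into the correct factor; once this bookkeeping is fixed the result is immediate.
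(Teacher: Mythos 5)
Your proposal is correct and is essentially the paper's own argument: the paper's proof simply assembles the landscape table of values $\theta(a)\chi_{\rho_1}(m_1)\chi_{\rho_2}(m_2)$ from the character tables of $\rho_1$ and $\rho_2$, taking for granted the multiplicativity of characters under external tensor products on $Z\times M_1\times M_2$, which you spell out explicitly. The vanishing assertion likewise follows, as you say, from the remarks after those two tables that $\chi_{\rho_1}$ and $\chi_{\rho_2}$ vanish off the listed types.
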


\begin{proof}
We summarize the results from Table \eqref{Character of rho1} and Table \eqref{Character of rho2} below.

\begin{landscape}
\captionof{table}{ Character of $\rho$}
\label{Character of thetaxrho1xrho2 }

\scriptsize
\resizebox{2\textwidth}{!}{
    \begin{tabular}{| c | c | c | c | c | c | c | c | c | c | c | }
    \hline
    & &    & & & & & & & & \\
    & Type-1 & Type-2 & Type-3 & Type-4 & Type-5 & Type-6 & Type-7 & Type-8 & Type-9 & Type-10\\
   \hline 
   
   & & & & & & & & & &\\
   Type-1& $\theta(a)(1-q)^2$ & $\theta(a)(1-q)$ & $\theta(a)(1-q)^2$ & $\theta(a)(1-q)^2(1-q^2)$ & $\theta(a)(1-q)^2$ & $\theta(a)(1-q)^2$ & $\theta(a)(1-q)^2$ & $\theta(a)(1-q)$ & $\theta(a)(1-q)^2$ & $\theta(a)(1-q)$\\
      & & & & & & & & & &\\
    \hline
    
       & & & & & & & & & &\\
       Type-2 & $\theta(a)(1-q)$ & $\theta(a)$ & $\theta(a)(1-q)$ & $\theta(a)(1-q)(1-q^2)$ & $\theta(a)(1-q)$ & $\theta(a)(1-q)$ & $\theta(a)(1-q)$ & $\theta(a)$ & $\theta(a)(1-q)$ & $\theta(a)$\\
          & & & & & & & & & &\\
\hline

   & & & & & & & & & &\\
    Type-3& $\theta(a)(1-q)^2$ & $\theta(a)(1-q)$ & $\theta(a)(1-q)^2$ & $\theta(a)(1-q)^2(1-q^2)$ & $\theta(a)(1-q)^2$ & $\theta(a)(1-q)^2$ & $\theta(a)(1-q)^2$ & $\theta(a)(1-q)$ & $\theta(a)(1-q)^2$ & $\theta(a)(1-q)$ \\
       & & & & & & & & & &\\
\hline

   & & & & & & & & & &\\
Type-4& $\theta(a)(1-q)^2(1-q^2)$ & $\theta(a)(1-q)(1-q^2)$ & $\theta(a)(1-q)^2(1-q^2)$ & $\theta(a)(1-q)^2(1-q^2)^2$ & $\theta(a)(1-q)^2(1-q^2)$ & $\theta(a)(1-q)^2(1-q^2)$ & $\theta(a)(1-q)^2(1-q^2)$ & $\theta(a)(1-q)(1-q^2)$ & $\theta(a)(1-q)^2(1-q^2)$ & $\theta(a)(1-q)(1-q^2)$\\
   & & & & & & & & & &\\
\hline

& & & & & & & & & &\\
   Type-5& $\theta(a)(1-q)^2$ & $\theta(a)(1-q)$ & $\theta(a)(1-q)^2$ & $\theta(a)(1-q)^2(1-q^2)$ & $\theta(a)(1-q)^2$ & $\theta(a)(1-q)^2$ & $\theta(a)(1-q)^2$ & $\theta(a)(1-q)$ & $\theta(a)(1-q)^2$ & $\theta(a)(1-q)$\\
      & & & & & & & & & &\\
    \hline

& & & & & & & & & &\\
   Type-6 & $\theta(a)(1-q)^2$ & $\theta(a)(1-q)$ & $\theta(a)(1-q)^2$ & $\theta(a)(1-q)^2(1-q^2)$ & $\theta(a)(1-q)^2$ & $\theta(a)(1-q)^2$ & $\theta(a)(1-q)^2$ & $\theta(a)(1-q)$ & $\theta(a)(1-q)^2$ & $\theta(a)(1-q)$\\
      & & & & & & & & & &\\
    \hline
    
    & & & & & & & & & &\\
   Type-7& $\theta(a)(1-q)^2$ & $\theta(a)(1-q)$ & $\theta(a)(1-q)^2$ & $\theta(a)(1-q)^2(1-q^2)$ & $\theta(a)(1-q)^2$ & $\theta(a)(1-q)^2$ & $\theta(a)(1-q)^2$ & $\theta(a)(1-q)$ & $\theta(a)(1-q)^2$ & $\theta(a)(1-q)$\\
      & & & & & & & & & &\\
    \hline
       
       & & & & & & & & & &\\
       Type-8 & $\theta(a)(1-q)$ & $\theta(a)$ & $\theta(a)(1-q)$ & $\theta(a)(1-q)(1-q^2)$ & $\theta(a)(1-q)$ & $\theta(a)(1-q)$ & $\theta(a)(1-q)$ & $\theta(a)$ & $\theta(a)(1-q)$ & $\theta(a)$\\
          & & & & & & & & & &\\
\hline

 & & & & & & & & & &\\
   Type-9& $\theta(a)(1-q)^2$ & $\theta(a)(1-q)$ & $\theta(a)(1-q)^2$ & $\theta(a)(1-q)^2(1-q^2)$ & $\theta(a)(1-q)^2$ & $\theta(a)(1-q)^2$ & $\theta(a)(1-q)^2$ & $\theta(a)(1-q)$ & $\theta(a)(1-q)^2$ & $\theta(a)(1-q)$\\
      & & & & & & & & & &\\
    \hline
    
 & & & & & & & & & &\\
       Type-10 & $\theta(a)(1-q)$ & $\theta(a)$ & $\theta(a)(1-q)$ & $\theta(a)(1-q)(1-q^2)$ & $\theta(a)(1-q)$ & $\theta(a)(1-q)$ & $\theta(a)(1-q)$ & $\theta(a)$ & $\theta(a)(1-q)$ & $\theta(a)$\\
          & & & & & & & & & &\\
\hline
    \end{tabular} 
    }
    \end{landscape}
\end{proof}

\section{Character calculation for $\pi_{N,\psi_{A}}$} \hfill

\begin{lemma} Let $m=ah \in M_{\psi_{A}}$, where $a\in Z$ and $h\in H_{A}$. Then,
\[\Theta_{N,\psi_A}(m)=\theta(a)\Theta_{N,\psi_A}(h).\]
\end{lemma}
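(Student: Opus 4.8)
The plan is to exploit the semidirect-product structure $M_{\psi_A}\simeq Z\times H_A$ established in the preceding lemma, together with the character formula
\[
\Theta_{N,\psi_A}(m)=\frac{1}{|N|}\sum_{n\in N}\Theta_\theta(mn)\overline{\psi_A(n)}
\]
from Proposition~2.6. Writing $m=ah$ with $a\in Z$ central, the key observation is that the central element $a=\operatorname{diag}(a,\dots,a)$ factors out of each matrix product $mn$ inside the argument of $\Theta_\theta$. First I would write $a=z\cdot I_6$ for the scalar $z\in F^\times$ and use that for any $g\in G$ one has $g\cdot(zI_6)=z\cdot g$, so that $mn=a\,hn=(zI_6)(hn)$.

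Next I would invoke the fact that $\pi_\theta$ has central character equal to $\theta|_{F^\times}$: since $\pi_\theta$ is the cuspidal representation attached to the regular character $\theta$ of $F_6^\times$, its central character is the restriction $\theta|_{F^\times}$ (here $F^\times$ embeds in $F_6^\times$, and $z\mapsto zI_6$ identifies $Z$ with $F^\times$). Consequently $\Theta_\theta\bigl((zI_6)g\bigr)=\theta(z)\,\Theta_\theta(g)$ for every $g\in G$. Applying this with $g=hn$ gives $\Theta_\theta(mn)=\theta(z)\,\Theta_\theta(hn)=\theta(a)\,\Theta_\theta(hn)$, where I abbreviate $\theta(a):=\theta(z)$ in keeping with the notation of the lemma.

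It then remains to pull the scalar $\theta(a)$ out of the summation. I would substitute into the character formula:
\begin{align*}
\Theta_{N,\psi_A}(m)
&=\frac{1}{|N|}\sum_{n\in N}\Theta_\theta(ahn)\,\overline{\psi_A(n)}\\
&=\frac{1}{|N|}\sum_{n\in N}\theta(a)\,\Theta_\theta(hn)\,\overline{\psi_A(n)}\\
&=\theta(a)\cdot\frac{1}{|N|}\sum_{n\in N}\Theta_\theta(hn)\,\overline{\psi_A(n)}\\
&=\theta(a)\,\Theta_{N,\psi_A}(h),
\end{align*}
which is exactly the claimed identity. One subtlety to check is that $h\in H_A\subseteq M_{\psi_A}$ genuinely lies in $M_{\psi_A}$ so that $\Theta_{N,\psi_A}(h)$ is defined via the same formula; this is immediate from the lemma identifying $M_{\psi_A}\simeq Z\times H_A$.

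The only real content here is the central-character computation, so the main obstacle — if any — is justifying that the central character of $\pi_\theta$ equals $\theta|_{F^\times}$ under the chosen identification $Z\simeq F^\times$. This is a standard fact about cuspidal representations parametrized by regular characters (it can be read off directly from the character formula in Theorem~\ref{character value of cuspidal representation (Dipendra)} by taking $g=zI_6$, which is semisimple with eigenvalue $z$ and $t=6$, forcing the product of the $(1-q^d)$ factors to reduce appropriately), and once it is in hand the rest of the argument is a one-line manipulation of the summation. Everything else is purely formal.
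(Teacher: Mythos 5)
Your proposal is correct and takes essentially the same route as the paper's own proof: both pull the central element $a$ out of $\Theta_{\theta}(ahn)$ inside the averaging formula using the central character $\omega_{\pi}$, and both then identify $\omega_{\pi}(a)=\theta(a)$ by evaluating the cuspidal character formula (Theorem 2.2, Prasad's version) at the scalar matrix, where the eigenvalue lies in $F^{\times}$, $d=1$ and $t=6$, and comparing with $\tr(\pi(a))=\omega_{\pi}(a)\dim(\pi)$. There is no gap; the argument matches the paper's step for step.
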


\begin{proof} We have
\begin{align*}
\Theta_{N,\psi_{A}}(m) &=\Theta_{N,\psi_{A}}(ah)\\
&= \frac{1}{|N|}\sum_{n \in N}\Theta_{\theta}(ahn)\overline{\psi_{A}(n)}\\
&=\frac{1}{|N|}\sum_{n \in N}\tr(\pi(ahn))\overline{\psi_A(n)}\\
&=\frac{1}{|N|}\sum_{n \in N}\tr(\pi(a)\pi(hn))\overline{\psi_A(n)}\\
&=\omega_{\pi}(a)\frac{1}{|N|}\sum_{n \in N}\tr(\pi(hn))\overline{\psi_A(n)}\\
&=\omega_{\pi}(a)\Theta_{N,\psi_A}(h)
\end{align*}
where $\omega_{\pi}$ is the central character of $\pi$. Explicitly, we have \[\Theta_{\theta}(a)=\tr(\pi(a))=\tr(\omega_{\pi}(a))=\omega_{\pi}(a)\dim(\pi).\]
Using Theorem \ref{character value of cuspidal representation (Dipendra)}, it is easy to see that $$\Theta_{\theta}(a)=\theta(a)\dim(\pi).$$
Thus, we have $\omega_{\pi}(a)=\theta(a)$ and the result follows. 
\end{proof}

\begin{lemma}
Let $\tau=\begin{bmatrix} 0 & w_0\\
w_0 & 0 \end{bmatrix}$. For $1 \leq i,j \leq 10$, we have 
\[T(j,i)=\tau{T(i,j)}^{\top}{\tau}^{-1}.\]
\end{lemma}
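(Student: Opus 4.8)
The plan is to realize the conjugation--transpose map $\Phi(k)=\tau k^{\top}\tau^{-1}$ explicitly on $H_A$ and to show that it interchanges the two mirabolic factors in a type-preserving way. First I would record that $w_0^2=I$ forces $\tau^2=I$ and $\tau^{\top}=\tau$, so that $\tau^{-1}=\tau$. Then, for $k=\begin{bmatrix} m_1 & 0\\ 0 & m_2\end{bmatrix}\in H_A$, a direct block computation gives
\[
\tau k^{\top}\tau^{-1}=\begin{bmatrix} w_0 m_2^{\top} w_0 & 0\\ 0 & w_0 m_1^{\top} w_0\end{bmatrix}.
\]
Thus everything reduces to understanding the single map $\phi(m)=w_0 m^{\top} w_0$, which sends $M_1$ to $M_2$ and vice versa (recall $M_2=w_0 M_1^{\top} w_0^{-1}$ and $w_0^{-1}=w_0$), and which is an involution since $\phi(\phi(m))=m$.

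The core of the argument is that $\phi$ is anti-transposition, i.e.\ reflection across the anti-diagonal: a short index chase shows $\phi(m)_{ij}=m_{4-j,\,4-i}$. For $m=[a_{rs}]\in M_1$ this produces $\phi(m)=[y_{rs}]\in M_2$ with the dictionary $y_{12}=a_{23}$, $y_{13}=a_{13}$, $y_{22}=a_{22}$, $y_{23}=a_{12}$, $y_{32}=a_{21}$, $y_{33}=a_{11}$ (and $y_{11}=1$, $y_{21}=y_{31}=0$). I would then check, type by type, that $\phi$ carries the defining conditions of Type-$i$ in Table~\ref{Character of rho1} to exactly the defining conditions of Type-$i$ in Table~\ref{Character of rho2}; since these conditions are each an ``iff'' on the matrix entries, this shows $\phi$ restricts to a bijection from $M_1$-Type-$i$ onto $M_2$-Type-$i$ for every $i\in\{1,\dots,10\}$. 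The fact that $\chi_{\rho_1}$ and $\chi_{\rho_2}$ already agree type by type is a manifestation of this symmetry.

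Granting this, the lemma follows immediately: for $k=(m_1,m_2)$ we have $k\in T(i,j)$ iff $m_1$ is Type-$i$ and $m_2$ is Type-$j$, iff $\phi(m_2)$ is Type-$j$ (in $M_1$) and $\phi(m_1)$ is Type-$i$ (in $M_2$), iff $\tau k^{\top}\tau^{-1}=\bigl(\phi(m_2),\phi(m_1)\bigr)\in T(j,i)$. Hence $\tau\, T(i,j)^{\top}\tau^{-1}=T(j,i)$.

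The main obstacle is the bookkeeping in the type-by-type verification, and in particular Types $9$ and $10$, whose definitions involve the auxiliary quantities $\delta=a_{21}^{-1}a_{23}(a_{11}-1)$ and $\gamma=y_{32}y_{13}(y_{22}-1)^{-1}$. Under the dictionary the condition $a_{13}=\delta$ translates into $y_{13}=y_{32}^{-1}y_{12}(y_{33}-1)$, which matches $y_{12}=-\gamma$ precisely when $y_{22}+y_{33}=2$, equivalently $a_{11}+a_{22}=2$. This relation is not displayed in the tables but holds throughout the support of these types by Lemma~\ref{lemma 3 in rho1 character calculation}(b), and invoking it is exactly what reconciles the $\delta$-condition on the $M_1$ side with the $\gamma$-condition on the $M_2$ side. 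Once this point is handled, the remaining eight types are routine entry comparisons.
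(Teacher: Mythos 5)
Your proposal is correct, and it is exactly the direct verification that the paper compresses into the single word ``Trivial'': the block computation $\tau k^{\top}\tau^{-1}=\bigl(w_0m_2^{\top}w_0,\,w_0m_1^{\top}w_0\bigr)$, the identification of $\phi(m)=w_0m^{\top}w_0$ as reflection across the anti-diagonal, and the type-by-type check against Tables~\ref{Character of rho1} and~\ref{Character of rho2}. Your treatment of Types 9 and 10 is the one genuinely non-mechanical point and you resolve it correctly: read literally, the table conditions are \emph{not} exchanged by $\phi$, and one must invoke the relation $a_{11}+a_{22}=2$ (equivalently $y_{22}+y_{33}=2$), which is forced on the support of these types by Lemma~\ref{lemma 3 in rho1 character calculation}(b) and its $\rho_2$ analogue, for the $\delta$- and $\gamma$-conditions (and likewise the two quadratic conditions on $a_{12}$ and $y_{23}$) to match.
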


\begin{proof} Trivial.
\end{proof}

\begin{theorem} \label{T(i,j)=T(j,i)}
 Let $m' \in T(j,i)$. Then there exists $m \in T(i,j)$ such that 
\[\Theta_{N,\psi_A}(m)=\Theta_{N,\psi_A}(m').\]
\end{theorem}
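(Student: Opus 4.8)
The plan is to exploit the symmetry $T(j,i)=\tau T(i,j)^{\top}\tau^{-1}$ supplied by the preceding lemma, together with two basic invariances of the character $\Theta_{\theta}$ of a cuspidal representation: it is a class function, and it is invariant under transpose, since in $\GL(6,F)$ every element is conjugate to its transpose (they share the same invariant factors). Given $m'\in T(j,i)$, the preceding lemma lets me write $m'=\tau m^{\top}\tau^{-1}$ for some $m\in T(i,j)$, and this $m$ is exactly the element whose existence is asserted. It then suffices to prove $\Theta_{N,\psi_{A}}(m')=\Theta_{N,\psi_{A}}(m)$, which I would do by transforming the defining sum $\Theta_{N,\psi_{A}}(m')=\frac{1}{|N|}\sum_{n\in N}\Theta_{\theta}(m'n)\overline{\psi_{A}(n)}$ term by term, without ever computing explicit values of $\Theta_\theta$.

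First I would simplify $\Theta_{\theta}(m'n)$. Noting that $\tau$ is a symmetric involution (because $w_{0}^{\top}=w_{0}$ and $w_{0}^{2}=I$, so $\tau^{\top}=\tau^{-1}=\tau$), I apply transpose invariance to get $\Theta_{\theta}(m'n)=\Theta_{\theta}((m'n)^{\top})=\Theta_{\theta}(n^{\top}\tau m\tau)$, then conjugate by $\tau$ (a class-function move) to obtain $\Theta_{\theta}(\tau n^{\top}\tau\, m)$, and finally use cyclicity $\Theta_{\theta}(XY)=\Theta_{\theta}(YX)$ to land on $\Theta_{\theta}(m\,\tilde n)$, where $\tilde n=\tau n^{\top}\tau$. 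A direct block computation shows that if $n=\begin{bmatrix} I & X\\ 0 & I\end{bmatrix}$ then $\tilde n=\begin{bmatrix} I & w_{0}X^{\top}w_{0}\\ 0 & I\end{bmatrix}$, so $\tilde n$ again lies in $N$, and the map $n\mapsto\tilde n$, i.e. $X\mapsto w_{0}X^{\top}w_{0}$, is an involution of $N$ and hence a bijection.

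The remaining point is that this bijection preserves the character $\psi_{A}$. Here the key identity is $w_{0}A^{\top}w_{0}=A$, which holds because $A=E_{13}$, $A^{\top}=E_{31}$, and $w_{0}$ realizes the transposition $1\leftrightarrow 3$. Using cyclicity of the trace and $w_{0}^{\top}=w_{0}$, this gives $\tr(A\,w_{0}X^{\top}w_{0})=\tr(w_{0}A^{\top}w_{0}\,X)=\tr(AX)$, so $\psi_{A}(\tilde n)=\psi_{A}(n)$ for every $n$. Substituting back yields $\Theta_{N,\psi_{A}}(m')=\frac{1}{|N|}\sum_{n\in N}\Theta_{\theta}(m\,\tilde n)\overline{\psi_{A}(\tilde n)}$, and reindexing the sum along the bijection $n\mapsto\tilde n$ collapses this to $\frac{1}{|N|}\sum_{n\in N}\Theta_{\theta}(mn)\overline{\psi_{A}(n)}=\Theta_{N,\psi_{A}}(m)$, as required.

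I expect the main obstacle to be bookkeeping rather than conceptual: one must verify carefully that the combined operation $n\mapsto\tau n^{\top}\tau$ sends the unipotent radical $N$ back into $N$ (and not into the opposite unipotent), and that it is exactly compatible with $\psi_{A}$ through the identity $w_{0}A^{\top}w_{0}=A$. Once these two matrix identities are pinned down, the three invariances of $\Theta_{\theta}$ (class function, transpose, cyclicity) chain together cleanly, and no case analysis over the types $i,j$ is needed.
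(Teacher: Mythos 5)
Your proof is correct, and it reaches the conclusion by a genuinely different route than the paper at the decisive step. Both arguments start identically: you use the lemma $T(j,i)=\tau T(i,j)^{\top}\tau^{-1}$ to produce $m\in T(i,j)$ with $m'=\tau m^{\top}\tau^{-1}$, and both ultimately rest on the same two matrix facts, namely that $\tau$ is a symmetric involution and that $w_{0}A^{\top}w_{0}=A$, which makes the substitution $X\mapsto w_{0}X^{\top}w_{0}$ compatible with $\psi_{A}$. The difference is how the character values get matched. The paper compares the two sums integrand by integrand: after changes of variables it must show that $\Theta_{\theta}$ takes the same value on $\begin{bmatrix} m_{2}^{\top} & w_{0}^{-1}Xw_{0}\\ 0 & m_{1}^{\top}\end{bmatrix}$ and on $\begin{bmatrix} m_{1} & w_{0}^{-1}X^{\top}w_{0}\\ 0 & m_{2}\end{bmatrix}$, and it deduces this from a rank identity (both have the same $\dim\Ker(h-1)$) combined with the cuspidal character formula of Theorem~\ref{character value of cuspidal representation (Dipendra)} --- a step that implicitly uses that all ten types consist of unipotent elements, so that the character value is a function of $\dim\Ker(h-1)$ alone. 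You instead note that every element of $\GL(6,F)$ is conjugate to its transpose (same invariant factors), so the class function $\Theta_{\theta}$ is transpose-invariant; together with cyclicity and conjugation by $\tau$ this yields the exact identity $\Theta_{\theta}(m'n)=\Theta_{\theta}(m\tilde n)$ with $\tilde n=\tau n^{\top}\tau=\begin{bmatrix} 1 & w_{0}X^{\top}w_{0}\\ 0 & 1\end{bmatrix}\in N$, and since $n\mapsto\tilde n$ is a $\psi_{A}$-preserving bijection of $N$ you can reindex the defining sum wholesale. What your version buys: no appeal to the cuspidal character formula, no kernel-dimension bookkeeping, and no reliance on unipotence --- the argument proves the statement with $\Theta_{\theta}$ replaced by an arbitrary class function of $\GL(6,F)$, and it makes transparent that the paper's two block matrices are in fact conjugate (transpose followed by the block swap), which is the hidden reason the paper's rank identity works. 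What the paper's version buys: it stays entirely within the kernel-dimension formalism (Proposition~\ref{dim(ker(h-1))} and Theorem~\ref{character value of cuspidal representation (Dipendra)}) that drives all the subsequent computations in Section 5, so no additional general fact about transpose-conjugacy needs to be imported.
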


\begin{proof}
Let $m'= \begin{bmatrix} m_1' & 0\\
0 & m_2' \end{bmatrix} \in T(j,i)$. Since $T(j,i)= \tau T(i,j)^{\top}\tau^{-1}$, it follows that, 
\[m'=\tau m^{\top} \tau^{-1}\]
for some $m=\begin{bmatrix} m_1 & 0  \\
0 & m_2 \end{bmatrix}\in T(i,j)$. Thus we have $m_1'=w_0{m_2}^{\top}{w_0}^{-1}$ and $m_2'=w_0{m_1}^{\top}{w_0}^{-1}$. 
Since $m_{1}'\in M_{1}$, clearly $\psi_A(X)=\psi_A(({w_0{m_2}^{\top}{w_0}})^{-1}X)$. We have 
\begin{align*}
\Theta_{N,\psi_A}(m')&= \frac{1}{|N|}\sum_{X \in \M(3,F)} \Theta_{\theta} \begin{bmatrix} w_0{m_2}^{\top}{w_0}^{-1} & X \\
0 & w_0{m_1}^{\top}{w_0}^{-1} \end{bmatrix} \overline{\psi_A(X)}\\
&= \frac{1}{|N|}\sum_{X \in \M(3,F)} \Theta_{\theta}\left( \begin{bmatrix} w_0 & 0 \\
0 & w_0 \end{bmatrix} \begin{bmatrix} {m_2}^{\top} & {w_0}^{-1}Xw_0 \\
0 & {m_1}^{\top} \end{bmatrix} \begin{bmatrix} w_0 & 0 \\
0 & w_0 \end{bmatrix} \right) \overline{\psi_A(X)}\\
&= \frac{1}{|N|}\sum_{X \in \M(3,F)} \Theta_{\theta}\left( \begin{bmatrix} {m_2}^{\top} & {w_0}^{-1}Xw_0 \\
0 & {m_1}^{\top} \end{bmatrix} \right) \overline{\psi_A(X)}.
\end{align*}
On the other hand, using $\tr(A({w_0}^{-1}X^{\top}w_0))=\tr(AX)$ we have 
\begin{align*}
\Theta_{N,\psi_A}(m) &= \frac{1}{|N|}\sum_{X \in \M(3,F)} \Theta_{\theta} \begin{bmatrix} m_1 & X \\
0 & m_2 \end{bmatrix} \overline{\psi_A(X)}\\
&= \frac{1}{|N|}\sum_{X \in \M(3,F)} \Theta_{\theta} \begin{bmatrix} m_1 & {X}^{\top} \\
0 & m_2 \end{bmatrix} \overline{\psi_A(X^{\top})}\\
&=\frac{1}{|N|} \sum_{X \in \M(3,F)} \Theta_{\theta} \begin{bmatrix} m_1 & {w_0}^{-1}{X}^{\top}w_0 \\
0 & m_2 \end{bmatrix} \overline{\psi_A({w_0}^{-1}X^{\top}w_0)}. \\
&= \frac{1}{|N|}\sum_{X \in \M(3,F)} \Theta_{\theta} \begin{bmatrix} m_1 & {w_0}^{-1}{X}^{\top}w_0 \\
0 & m_2 \end{bmatrix} \overline{\psi_A(X)}.
\end{align*}
Since \begin{align*}
\Rank \left (\begin{bmatrix} {m_2}^{\top}-1 & w_0^{-1}Xw_0 \\
0 & {m_1}^{\top}-1 \end{bmatrix} \right)
&=\Rank \left (\begin{bmatrix} {m_2}-1 & 0 \\
w_0^{-1}{X}^{\top}w_0 & {m_1}-1 \end{bmatrix} \right)\\
&=\Rank \left ( \begin{bmatrix} 0 & 1 \\
1 & 0 \end{bmatrix} \begin{bmatrix} {m_2}-1 & 0 \\
w_0^{-1}{X}^{\top}w_0 & {m_1}-1 \end{bmatrix} \begin{bmatrix} 0& 1\\
1 & 0 \end{bmatrix}\right)\\
&=\Rank \left ( \begin{bmatrix} {m_1}-1 & w_0^{-1}{X}^{\top}w_0 \\
0 & {m_2}-1 \end{bmatrix} \right)
\end{align*}
we have, \[\dim(\ker\left (\begin{bmatrix} {m_2}^{\top}-1 & w_0^{-1}Xw_0 \\
0 & {m_1}^{\top}-1 \end{bmatrix} \right)=\dim(\ker  \left ( \begin{bmatrix} {m_1}-1 & w_0^{-1}{X}^{\top}w_0 \\
0 & {m_2}-1 \end{bmatrix} \right).\]
Hence, \[\Theta_{\theta}\begin{bmatrix} {m_2}^{\top} & w_0^{-1}Xw_0 \\
0 & {m_1}^{\top} \end{bmatrix} =\Theta_{\theta}\begin{bmatrix} {m_1} & w_0^{-1}{X}^{\top}w_0 \\
0 & {m_2} \end{bmatrix} \]
and the result follows. 
\end{proof}

\begin{remark} We have used the fact that $\Rank(M)=\Rank(M^{\top})$ and $\Rank(NMP)=\Rank(M)$ if $N$ and $P$ are invertible matrices. \\
\end{remark}

Let $m=(m_{1}, m_{2}) \in M_{1}\times M_{2}= H_{A}$. Suppose also that $m_{1}, m_{2}$ are unipotent. To calculate $\Theta_{N,\psi_A}(m)$, we need to compute $\Theta_{\theta}(h)$, where $h=\begin{bmatrix} m_1 & X\\
0 & m_2 \end{bmatrix}$. Using Theorem \ref{character value of cuspidal representation (Dipendra)}, it suffices to compute $\dim\Ker(h-1)$. We note that the following proposition is valid even when $H_{A}$ is a subgroup of $\GL(2n,F)$.

\begin{proposition} \label{dim(ker(h-1))}
Let $h=\begin{bmatrix}
m_1 & X \\
0 & m_2 \end{bmatrix}\in \GL(2n,F)$, where $(m_{1}, m_{2})\in H_{A}$. Suppose that $m_{1}$ and $m_{2}$ are also unipotent. Let $W'=\Ker(m_2-1)$.  Then, we have
\[\dim\Ker(h-1)=\dim \Ker(m_1-1)+\dim \Ker(m_2-1)-\dim(XW')+\dim\{XW' \cap \Image(m_1-1)\}.\]
\end{proposition}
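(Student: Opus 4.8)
The plan is to reduce the computation of $\dim\Ker(h-1)$ to a pair of rank--nullity arguments on the two coordinate blocks. Writing a vector of $F^{2n}$ as a column $\begin{bmatrix} u \\ w \end{bmatrix}$ with $u,w\in F^{n}$, the block form $h-1 = \begin{bmatrix} m_1-1 & X \\ 0 & m_2-1\end{bmatrix}$ shows that $\begin{bmatrix} u \\ w\end{bmatrix}\in\Ker(h-1)$ if and only if $(m_2-1)w=0$ and $(m_1-1)u = -Xw$. The first condition says exactly $w\in W'=\Ker(m_2-1)$, and given such a $w$ the second is solvable in $u$ precisely when $Xw\in\Image(m_1-1)$.

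First I would introduce the subspace $W'' = \{\, w\in W' : Xw\in\Image(m_1-1)\,\}$ and consider the projection $p\colon\Ker(h-1)\to F^n$, $p\begin{bmatrix}u\\w\end{bmatrix}=w$. By the analysis above $p$ is a linear map with image exactly $W''$, and its kernel is $\{\begin{bmatrix}u\\0\end{bmatrix} : (m_1-1)u=0\}\cong\Ker(m_1-1)$. Rank--nullity then gives $\dim\Ker(h-1)=\dim\Ker(m_1-1)+\dim W''$, so the entire problem collapses to computing $\dim W''$.

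Next I would compute $\dim W''$ by a second rank--nullity argument. Consider the map $T\colon W'\to F^n/\Image(m_1-1)$ sending $w$ to the class of $Xw$; then $W''=\Ker T$, so $\dim W'' = \dim W' - \dim\Image T$. The image of $T$ is the image of the subspace $XW'$ under the quotient map $F^n\to F^n/\Image(m_1-1)$, and the kernel of that quotient map restricted to $XW'$ is $XW'\cap\Image(m_1-1)$; hence $\dim\Image T = \dim(XW') - \dim\{XW'\cap\Image(m_1-1)\}$. Substituting this into the previous display and using $\dim W'=\dim\Ker(m_2-1)$ yields the stated formula.

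Since every step is an elementary rank--nullity count over the finite field $F$, there is no serious obstacle; the only point demanding care is the bookkeeping in the last step, namely correctly identifying $\dim\Image T$ with $\dim(XW')-\dim\{XW'\cap\Image(m_1-1)\}$ rather than with $\dim(XW')$ alone. I would also note that the unipotence hypothesis on $m_1,m_2$ plays no role in this purely linear computation (it is only needed afterward, to feed $\dim\Ker(h-1)$ into Theorem~\ref{character value of cuspidal representation (Dipendra)}).
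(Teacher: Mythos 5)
Your proof is correct, but it takes a genuinely different route from the paper's. The paper works on the image side: it extends bases of $\Ker(m_1-1)$ and $\Ker(m_2-1)$ to bases of $V$, writes $\Image(h-1)$ as a sum of three explicit subspaces $W_1=\Image(m_1-1)$, $W_2=XW'$, and a third piece $W_3$ with $\dim W_3=\dim\Image(m_2-1)$, checks that $W_3$ meets the others trivially, and thereby computes $\dim\Image(h-1)=\dim\Image(m_1-1)+\dim\Image(m_2-1)+\dim(XW')-\dim\{XW'\cap\Image(m_1-1)\}$, from which the kernel formula follows by rank--nullity. You instead stay entirely on the kernel side, with two basis-free applications of rank--nullity: the projection of $\Ker(h-1)$ onto the second block (kernel isomorphic to $\Ker(m_1-1)$, image $W''$), followed by the map $T\colon W'\to F^n/\Image(m_1-1)$ (kernel $W''$, image the projection of $XW'$ into the quotient). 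Your route is cleaner: it requires no choice of bases and no verification of trivial-intersection claims such as $(W_1+W_2)\cap W_3=\{0\}$, which the paper's sum decomposition needs. What the paper's basis bookkeeping buys, however, is the explicit description of $XW'$ as $\Span\{X_1,\dots,X_k\}$, the span of the columns of $X$ corresponding to the kernel basis of $m_2-1$; this is recorded as Remark \ref{X} and is used repeatedly in the subsequent character computations, so that extra concreteness is not wasted. Your closing observation is also correct: unipotence of $m_1,m_2$ plays no role in this purely linear-algebraic statement and only matters when the conclusion is fed into Theorem \ref{character value of cuspidal representation (Dipendra)}.
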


\begin{proof}
Let $V$ be an $n$-dimensional vector space over $F$ and let $m_{1}, m_{2}, X$ be linear operators on $V$. Suppose that $\{e_1,\hdots, e_m\}$ is a basis for $\Ker(m_{1}-1)$ and $\{f_1,\hdots,f_k\}$ is a basis for $\Ker(m_{2}-1)$. Extending the basis of $\Ker(m_1-1)$ and $\Ker(m_{2}-1)$ we get ordered bases $\beta=\{e_1,\hdots,e_n\}$ and $\beta'=\{f_1,\hdots,f_n\}$ of $V$. Consider the ordered basis $\tilde{\beta}=\{(e_1,0),\hdots,(e_n,0),(0,f_1),\hdots,(0,f_n)\}$ of $V \oplus V$. We let $h$ to be the linear operator on $V \oplus V$ defined as follows. For $1 \leq i,j \leq n$, 
\[h((e_i,0))=(m_1,0)(e_i,0)=(m_1(e_i),0)\] and \[ h((0,f_j))=(X,m_2)(0,f_j)=(X(f_j),m_2(f_j)).\]
Then, 
\[[h]_{\tilde{\beta}}=\begin{bmatrix}
[m_1]_{\beta} & [X]_{\beta'} \\
0 & [m_2]_{\beta'} \end{bmatrix}\]
where \[[X]_{\beta'}=[X_1 ~ X_2~ \cdots~ X_n].\] 
Let \[W_1=\Span\{(m_1-1,0)(e_{m+1},0),\hdots,(m_1-1,0)(e_n,0)\}=\Image(m_1-1),\]
\[W_2=\Span\{(X,m_2-1)(0,f_1),\hdots,(X,m_2-1)(0,f_k)\}=XW'\]
and 
\[W_3=\Span\{(Xf_{k+1},(m_2-1)f_{k+1}),\hdots,(Xf_n,(m_2-1)f_n).\]
Clearly, 
\[\Image(h-1)=W_1 + W_2 + W_3.\]
It is easy to see that \[W_2 \cap W_3=\{0\}=W_1 \cap W_3.\]
Since $\dim(\Ker(m_2-1))=k$, we have that \[\dim(W_3)=\dim(\Image(m_2-1)).\] Therefore, \[\dim(\Image(h-1))=\dim(\Image(m_1-1))+\dim(\Image(m_2-1)) +\dim(XW')-\dim(XW' \cap \Image(m_1-1)).\]
Hence the result.
\end{proof}

\begin{remark} \label{X}
Let $h$ be as in Proposition \ref{dim(ker(h-1))}. We note that\[XW'=\Span\{X_1,X_2,\hdots,X_k\}.\] We will continue to use this in our character calculations at several instances to follow. 
\end{remark}

Let $m=\begin{bmatrix} m_{1} & 0 \\ 0 & m_{2}\end{bmatrix}\in H_{A}$, where $m_{1}\in M_{1}$, $m_{2}\in M_{2}$. Throughout we write $W^{'}=\Ker(m_{2}-1)$. For $X\in \M(3,F)$, we let $h=\begin{bmatrix} m_{1} & X \\ 0 & m_{2}\end{bmatrix}$. For $\beta\in F$, we define 
\[S(\beta)=\{ X \in M(3,F) \mid \tr(A{m_1}^{-1}X)=\tr(AX)=\beta\}.\]
Let \[E= \bigcup_{\substack{i \leq j\\ i,j \in \{1,2,4\}}}T{(i,j)}.\] 
We call $E$ to be the fundamental set. To determine $\Theta_{N,\psi_A}(m)$ for $m \in T{(i,j)}$, it is enough to compute $\Theta_{N,\psi_A}(m)$ for $m \in E$.

\begin{theorem} \label{Type(1,1)}
Let $m \in T(1,1)$. Then, we have \[\Theta_{N,\psi_A}(m)=(1-q)^2.\]
\end{theorem}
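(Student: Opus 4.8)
The plan is to evaluate the character formula for the twisted Jacquet module recalled in Section~2.2 directly at $m=\begin{bmatrix} m_1 & 0\\ 0 & m_2\end{bmatrix}\in T(1,1)$, where $m_1\in M_1$ and $m_2\in M_2$ are the unipotent Type-1 matrices of Tables~\ref{Character of rho1} and~\ref{Character of rho2}. Writing $n=\begin{bmatrix} 1 & Y\\ 0 & 1\end{bmatrix}$ and reindexing by $X=m_1Y$ (as in the proof of Theorem~\ref{T(i,j)=T(j,i)}), and using that $m_1^{-1}$ again has bottom row $(0,0,1)$ so that $\tr(Am_1^{-1}X)=\tr(AX)=X_{31}$, the formula becomes
\[\Theta_{N,\psi_A}(m)=\frac{1}{q^9}\sum_{X\in\M(3,F)}\Theta_{\theta}\!\begin{bmatrix} m_1 & X\\ 0 & m_2\end{bmatrix}\overline{\psi_0(X_{31})}.\]
Since $h=\begin{bmatrix} m_1 & X\\ 0 & m_2\end{bmatrix}$ is unipotent, Theorem~\ref{character value of cuspidal representation (Dipendra)} (applied with $z=1$, $d=1$, $m=6$, so that the Galois sum is $\theta(1)=1$) gives $\Theta_{\theta}(h)=-(1-q)(1-q^2)\cdots(1-q^{t-1})$ with $t=\dim\Ker(h-1)$. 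Thus the entire problem reduces to understanding how $t$ varies with $X$.

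Next I would invoke Proposition~\ref{dim(ker(h-1))}. For the Type-1 matrices one computes $\dim\Ker(m_1-1)=\dim\Ker(m_2-1)=2$, $\Image(m_1-1)=\Span\{e_1\}$, and $W'=\Ker(m_2-1)=\Span\{e_1,e_2\}$; hence by Remark~\ref{X} one has $XW'=\Span\{X_1,X_2\}$, where $X_1,X_2$ are the first two columns of $X$, and
\[t=4-\dim\Span\{X_1,X_2\}+\dim\big(\Span\{X_1,X_2\}\cap\Span\{e_1\}\big).\]
In particular $t$, and therefore $\Theta_{\theta}(h)$, depends only on $(X_1,X_2)$, while the weight $\overline{\psi_0(X_{31})}=\overline{\psi_0((X_1)_3)}$ depends only on the third entry of $X_1$. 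Summing over the completely free third column $X_3$ produces a factor $q^3$, leaving
\[\Theta_{N,\psi_A}(m)=\frac{1}{q^6}\sum_{X_1,X_2\in F^3}\Theta_{\theta}(h)\,\overline{\psi_0((X_1)_3)}.\]

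I would then partition the pairs $(X_1,X_2)$ according to $r=\dim\Span\{X_1,X_2\}\in\{0,1,2\}$ and the position of $\Span\{X_1,X_2\}$ relative to $\Span\{e_1\}$ (contains it, equals it, or avoids it), which is precisely the data pinning down $t\in\{2,3,4\}$ and hence the three possible values $-(1-q)$, $-(1-q)(1-q^2)$, $-(1-q)(1-q^2)(1-q^3)$. On each block the inner character weight is summed using $\sum_{x\in F}\psi_0(x)=0$; concretely the blocks with $X_1\in\Span\{e_1\}$ carry weight $1$ (there $(X_1)_3=0$), while those with $X_1\notin\Span\{e_1\}$ contribute through $\sum_{X_1\notin\Span\{e_1\}}\overline{\psi_0((X_1)_3)}=-q$. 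Pairing these weighted cardinalities against the three character values and simplifying, the whole sum collapses to $q^6(1-q)^2$, so that $\Theta_{N,\psi_A}(m)=(1-q)^2$.

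The main obstacle is the bookkeeping in the rank-$2$ stratum. For a fixed $X_1\notin\Span\{e_1\}$ one must count, among the $q^3-q$ choices of $X_2$ independent from $X_1$, exactly how many satisfy $e_1\in\Span\{X_1,X_2\}$ (there are $q^2-q$ such $X_2$, namely those of the form $se_1+uX_1$ with $s\in F^{\times},u\in F$), since this is what separates $t=3$ from $t=2$. Getting these counts correct and keeping them properly coupled to the additive-character sum over $(X_1)_3$ is the delicate point; once the strata are tabulated, the remaining simplification is a routine verification that the weighted cardinalities telescope to $q^6(1-q)^2$.
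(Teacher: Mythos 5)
Your proposal is correct: the reduction and the counts all check out, and the weighted sum does collapse to $q^6(1-q)^2$. The skeleton is the same as the paper's proof of Theorem~\ref{Type(1,1)}---both evaluate $\frac{1}{q^9}\sum_{X}\Theta_{\theta}(h)\overline{\psi_A(m_1^{-1}X)}$ using Theorem~\ref{character value of cuspidal representation (Dipendra)} at unipotent elements together with Proposition~\ref{dim(ker(h-1))} and Remark~\ref{X}---but your organization of the sum is genuinely different. The paper first splits by the value $\beta=\tr(AX)$ (the sets $S(0)$ and $S(\beta)$, $\beta\neq 0$) and then partitions each set by explicit matrix shapes, which produces Tables~\ref{A_1} and~\ref{A_2} with nearly thirty strata whose cardinalities are computed one by one. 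You instead exploit the fact that $t=\dim\Ker(h-1)$ depends only on the first two columns $(X_1,X_2)$ of $X$ while the character weight depends only on $(X_1)_3$: summing out the free third column gives the factor $q^3$, and the remaining sum over pairs is stratified by $\dim\Span\{X_1,X_2\}$ and its position relative to $\Span\{e_1\}$---five strata in all, with the trace condition handled analytically via $\sum_{x\in F}\psi_0(x)=0$ instead of by separate trace-fixed tables. Concretely, with $v_t=-(1-q)\cdots(1-q^{t-1})$, your cases contribute $q\left[q\,v_4+(q^3-q)v_3\right]$ and $(-q)\left[q^2v_3+q^2(q-1)v_2\right]$, whose sum is indeed $q^6(1-q)^2$. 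What your route buys is brevity and transparency: the only nontrivial enumeration is the count $q^2-q$ of completions $X_2=se_1+uX_1$, $s\in F^{\times}$, separating $t=3$ from $t=2$, which you identify correctly. What the paper's route buys is uniformity: its trace-stratified tables follow the same template that is reused for every other type $T(i,j)$ in the later sections, so one bookkeeping scheme serves the entire main theorem.
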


\begin{proof}
 We have \[ \Theta_{N,\psi_A}(m)=\frac{1}{|N|} \sum_{ X \in M(3,F)}\Theta_{\theta} \begin{bmatrix} m_{1} & X\\
 0 & m_{2}\end{bmatrix}\overline{\psi_{A}({m_{1}}^{-1}X}).\]
Note that $\dim\Ker(m_1-1)=2$, $\Image(m_1-1)=\Span \left \{ e_1 \right \}$ and $\Ker(m_2-1)=\Span \left \{ e_1,e_2 \right \}$.
To calculate the character value, we write 
 \[ \Theta_{N,\psi_{A}}(m)=\frac{1}{q^{9}} (A_{1}+A_{2})\]
 where 
 \[ A_{1}= \sum_{X \in S(0)}\Theta_{\theta} \begin{bmatrix} m_{1} & X\\
 0 & m_{2}\end{bmatrix}\overline{\psi_{A}({m_{1}}^{-1}X})\]
  and \[ A_{2}=\sum_{\beta \in F^{\times}}\sum_{X \in S(\beta)}\Theta_{\theta} \begin{bmatrix} m_{1} & X\\
 0 & m_{2}\end{bmatrix}\overline{\psi_A({m_1}^{-1}X}).\]
For simplicity,
we let $t=\dim(\Ker(h-1))$. To compute $A_{1}$, we find a partition of $S(0)$ according to the value of $t$ and compute the respective cardinalities. We record the details in the table below. 
 
\begin{center}
\captionof{table}{$A_1$}
\label{A_1}
\scriptsize
\scriptsize
\resizebox{1\textwidth}{!}{
    \begin{tabular}{| c | c | c | c | c | c |}
    \hline
  &  & &    &  &  \\
  & Partition of $S(0)$ & $\dim(XW')$ & $\dim(XW' \cap \Image(m_1-1))$ & $t=\dim(\Ker(h-1))$ & Cardinality\\
  & & & &  & \\
   \hline 
   
 & & & & &\\
$1) (a)$ & $\left \{\begin{bmatrix} 0 & 0 & e  \\ 0 & 0 & f\\
0 & 0 & l \end{bmatrix} \right \}$ & $0$ & $0$ & $4$ & $q^3$\\
 & & & & & \\
\hline
& & & & & \\
$1)(b)$ & $\left \{ \begin{bmatrix} 0 & 0 & e\\
         0 & 0 & f\\
         0 & k & l         \end{bmatrix} \,\middle \vert\, k \in F^{\times} \right \} $  & $1$ & $0$ & $3$ & $(q-1)q^3$\\
 & & & & &\\
\hline
& & & & & \\
$2)(a)$ & $\left \{ \begin{bmatrix} a & b & e\\
         c & d & f\\
         0 & 0 & l         \end{bmatrix} \,\middle \vert\, ad-bc \neq 0 \right \} $  & $2$ & $1$ & $3$ & $(q^2-1)(q-1)q^4$\\
 & & & & &\\
\hline
& & & & & \\
$2)(b)$ & $\left \{ \begin{bmatrix} a & b & e\\
         c & d & f\\
         0 & k & l         \end{bmatrix} \,\middle \vert\, c,k \in F^{\times}, ad-bc \neq 0 \right \} $  & $2$ & $0$ & $2$ & $(q-1)^3q^5$\\
 & & & & &\\
\hline
& & & & & \\
$2)(c)$ & $\left \{ \begin{bmatrix} a & b & e\\
         0 & d & f\\
         0 & k & l         \end{bmatrix} \,\middle \vert\, k \in F^{\times}, ad \neq 0 \right \} $  & $2$ & $1$ & $3$ & $(q-1)^3q^4$\\
 & & & & &\\
\hline
& & & & & \\
$3)(a)$ & $\left \{ \begin{bmatrix} a & \gamma a & e\\
         c & \gamma c & f\\
         0 & 0 & l         \end{bmatrix} \,\middle \vert\, c \in F^{\times},\gamma \in F \right \} $  & $1$ & $0$ & $3$ & $(q-1)q^5$\\
 & & & & &\\
\hline
& & & & & \\
$3)(b)$ & $\left \{ \begin{bmatrix} a & \gamma a & e\\
         0 & 0 & f\\
         0 & 0 & l         \end{bmatrix} \,\middle \vert\, a \in F^{\times},\gamma \in F \right \} $  & $1$ & $1$ & $4$ & $(q-1)q^4$\\
 & & & & &\\
\hline
& & & & & \\
$3)(c)$ & $\left \{ \begin{bmatrix} a & \gamma a & e\\
         c & \gamma c & f\\
         0 & k & l         \end{bmatrix} \,\middle \vert\, c,k \in F^{\times},\gamma \in F \right \} $  & $2$ & $0$ & $2$ & $(q-1)^2q^5$\\
 & & & & &\\
\hline
& & & & & \\
$3)(d)$ & $\left \{ \begin{bmatrix} a & \gamma a & e\\
          0 & 0 & f\\
         0 & k & l         \end{bmatrix} \,\middle \vert\, k \in F^{\times},\gamma \in F \right \} $  & $2$ & $1$ & $3$ & $(q-1)^2q^4$\\
 & & & & &\\
\hline
& & & & & \\
$4)(a)$ & $\left \{ \begin{bmatrix} 0 & b & e\\
         0 & d & f\\
         0 & 0 & l         \end{bmatrix} \,\middle \vert\, d \in F^{\times} \right \} $  & $1$ & $0$ & $3$ & $(q-1)q^4$\\
 & & & & &\\
\hline
& & & & &\\
$4)(b)$ & $\left \{ \begin{bmatrix} 0 & b & e\\
         0 & 0 & f\\
         0 & 0 & l         \end{bmatrix} \,\middle \vert\, b \in F^{\times} \right \} $  & $1$ & $1$ & $4$ & $(q-1)q^3$\\
 & & & & &\\
\hline
& & & & &\\
$4)(c)$ & $\left \{ \begin{bmatrix} 0 & b & e\\
         0 & d & f\\
         0 & k & l         \end{bmatrix} \,\middle \vert\, k \in F^{\times} \right \} $  & $1$ & $0$ & $3$ & $(q^2-1)(q-1)q^3$\\
 & & & & &\\
\hline
    \end{tabular} 
    }
\end{center}
 Hence, \[A_{1}=K_{1}+K_{2}+K_{3}\]
 where
 \begin{itemize}
\item[a)]$K_{1}= \displaystyle \sum_{\substack{X\in S(0) \\ t=4 }}\Theta_{\theta} \left( \begin{bmatrix} m_1{} & X\\
 0 & m_{2}\end{bmatrix} \right)\overline{\psi_A({m_{1}}^{-1}X})=-q^{5}(1-q)(1-q^{2})(1-q^{3}).$\\
 \item[b)]$K_{2}= \displaystyle \sum_{\substack{X\in S(0) \\ t=3 }}\Theta_{\theta} \begin{bmatrix} m_{1} & X\\
 0 & m_{2}\end{bmatrix}\overline{\psi_A({m_{1}}^{-1}X})=-q^{5}(2q+1)(q-1)^{2}(q^{2}-1).$\\
 \item[c)] $ K_{3}= \displaystyle \sum_{\substack{X\in S(0) \\ t=2 }}\Theta_{\theta} \begin{bmatrix} m_{1} & X\\
 0 & m_{2}\end{bmatrix}\overline{\psi_A({m_{1}}^{-1}X})=q^{6}(q-1)^{3}.$
\end{itemize}
\vspace{-0.4 cm}
It follows that 
\begin{equation} \label{A_1(Type(1,1)} A_{1}=\sum_{X \in S(0)}\Theta_{\theta} \begin{bmatrix} m_{1} & X\\
 0 & m_{2}\end{bmatrix}\overline{\psi_A({m_{1}}^{-1}X})=q^{8}(q-1)^{3}.\end{equation}
Proceeding in a similar way, we find a partition of $S(\beta)$ to compute $A_{2}$. We record the details in the table below. 

\begin{center}
\captionof{table}{$A_2$}
\label{A_2}
\scriptsize
\scriptsize
\resizebox{1\textwidth}{!}{
    \begin{tabular}{| c | c | c | c | c | c |}
    \hline
  &  & &    &  &  \\
  & Partition of $S(\beta)$ & $\dim(XW')$ & $\dim(XW' \cap \Image(m_1-1))$ & $t=\dim(\Ker(h-1))$ & Cardinality\\
  & & & &  & \\
   \hline 
   
 & & & & &\\
$1) (a)$ & $\left \{\begin{bmatrix} 0 & 0 & e  \\ 0 & 0 & f\\
\beta & 0 & l \end{bmatrix} \right \}$ & $1$ & $0$ & $3$ & $q^3$\\
 & & & & & \\
\hline
& & & & & \\
$1)(b)$ & $\left \{ \begin{bmatrix} 0 & 0 & e\\
         0 & 0 & f\\
         \beta & k & l         \end{bmatrix} \,\middle \vert\, k \in F^{\times} \right \} $  & $1$ & $0$ & $3$ & $(q-1)q^3$\\
 & & & & &\\
\hline
& & & & & \\
$2)(a)$ & $\left \{ \begin{bmatrix} a & b & e\\
         c & d & f\\
         \beta & 0 & l         \end{bmatrix} \,\middle \vert\, d \in F^{\times}, ad-bc \neq 0 \right \} $  & $2$ & $0$ & $2$ & $(q-1)^2q^5$\\
 & & & & &\\
\hline
& & & & & \\
$2)(b)$ & $\left \{ \begin{bmatrix} a & b & e\\
         c & 0 & f\\
         \beta & 0 & l         \end{bmatrix} \,\middle \vert\,  bc \neq 0 \right \} $  & $2$ & $1$ & $3$ & $(q-1)^2q^4$\\
 & & & & &\\
\hline
& & & & & \\
$2)(c)$ & $\left \{ \begin{bmatrix} a & b & e\\
         0 & d & f\\
         \beta & k & l         \end{bmatrix} \,\middle \vert\, k \in F^{\times}, ad \neq 0 \right \} $  & $2$ & $0$ & $2$ & $(q-1)^2(q^2-q)q^3$\\
 & & & & &\\
\hline
& & & & & \\
$2)(d)$ & $\left \{ \begin{bmatrix} a & b & e\\
         c & 0 & f\\
         \beta & k & l         \end{bmatrix} \,\middle \vert\, k \in F^{\times}, bc \neq 0 \right \} $  & $2$ & $0$ & $2$ & $(q-1)^2(q^2-q)q^3$\\
 & & & & &\\
\hline
& & & & &\\
$2)(d)$ & $\left \{ \begin{bmatrix} a & b & e\\
         c & d & f\\
         \beta & k & l         \end{bmatrix} \,\middle \vert\, c,d,k \in F^{\times}, ad-bc \neq 0,d=\beta^{-1}ck \right \} $  & $2$ & $1$ & $3$ & $(q-1)^2(q^2-q)q^3$\\
 & & & & &\\
\hline
& & & & &\\
$2)(e)$ & $\left \{ \begin{bmatrix} a & b & e\\
         c & d & f\\
         \beta & k & l         \end{bmatrix} \,\middle \vert\, c,d,k \in F^{\times}, ad-bc \neq 0,d\neq\beta^{-1}ck \right \} $  & $2$ & $0$ & $2$ & $(q-1)^3(q-2)q^4$\\
 & & & & &\\
\hline
& & & & & \\
$3)(a)$ & $\left \{ \begin{bmatrix} a & \gamma a & e\\
         c & \gamma c & f\\
         \beta & 0 & l  \end{bmatrix} \,\middle \vert\, c,\gamma \in F^{\times} \right \} $  & $2$ & $0$ & $2$ & $(q-1)^2q^4$\\
 & & & & &\\
\hline
& & & & & \\
$3)(b)$ & $\left \{ \begin{bmatrix} a & \gamma a & e\\
         0 & 0 & f\\
         \beta & 0 & l         \end{bmatrix} \,\middle \vert\, a,\gamma \in F^{\times}\right \} $  & $2$ & $1$ & $3$ & $(q-1)^2q^3$\\
 & & & & &\\
\hline
& & & & & \\
$3)(c)$ & $\left \{ \begin{bmatrix} a & 0 & e\\
         c & 0 & f\\
         \beta & 0 & l         \end{bmatrix} \,\middle \vert\, (a,c) \neq 0 \right \} $  & $1$ & $0$ & $3$ & $(q^2-1)q^3$\\
 & & & & &\\
\hline
& & & & & \\
$3)(d)$ & $\left \{ \begin{bmatrix} a & \gamma a & e\\
         c & \gamma c & f\\
         \beta & k & l         \end{bmatrix} \,\middle \vert\, \gamma \in F^{\times}, k=\gamma \beta \right \} $  & $1$ & $0$ & $3$ & $(q^2-1)(q-1)q^3$\\
 & & & & &\\
\hline
& & & & & \\
$3)(e)$ & $\left \{ \begin{bmatrix} a & 0 & e\\
         c & 0 & f\\
         \beta & k & l         \end{bmatrix} \,\middle \vert\, k,c \in F^{\times} \right \} $  & $2$ & $0$ & $2$ & $(q-1)^2q^4$\\
 & & & & &\\
\hline
& & & & & \\
$3)(f)$ & $\left \{ \begin{bmatrix} a & 0 & e\\
         0 & 0 & f\\
         \beta & k & l         \end{bmatrix} \,\middle \vert\, k \in F^{\times} \right \} $  & $2$ & $1$ & $3$ & $(q-1)^2q^3$\\
 & & & & &\\
\hline
& & & & & \\
$3)(g)$ & $\left \{ \begin{bmatrix} a & \gamma a & e\\
         c & \gamma c & f\\
         \beta & k & l         \end{bmatrix} \,\middle \vert\, c,k,\gamma \in F^{\times}, k \neq \gamma \beta \right \} $  & $2$ & $0$ & $2$ & $(q-1)^2(q-2)q^4$\\
 & & & & &\\
\hline
& & & & & \\
$3)(h)$ & $\left \{ \begin{bmatrix} a & \gamma a & e\\
         0 & 0 & f\\
         \beta & k & l         \end{bmatrix} \,\middle \vert\, a,k,\gamma \in F^{\times}, k \neq \gamma \beta \right \} $  & $2$ & $1$ & $3$ & $(q-1)^2(q-2)q^3$\\
 & & & & &\\
\hline
& & & & & \\
$4)(a)$ & $\left \{ \begin{bmatrix} 0 & b & e\\
         0 & d & f\\
         \beta & k & l         \end{bmatrix} \,\middle \vert\, d \in F^{\times} \right \} $  & $2$ & $0$ & $2$ & $(q-1)q^5$\\
 & & & & &\\
\hline
& & & & & \\
$4)(b)$ & $\left \{ \begin{bmatrix} 0 & b & e\\
         0 & 0 & f\\
         \beta & k & l         \end{bmatrix} \,\middle \vert\, b \in F^{\times} \right \} $  & $2$ & $1$ & $3$ & $(q-1)q^4$\\
 & & & & &\\
\hline
    \end{tabular} 
    }
\end{center}
Hence, 
\[A_{2}=K_{4}+K_{5}\]
\begin{itemize}
\item[a)] $K_{4} = \displaystyle \sum_{\beta \in F^{\times}} \sum_{\substack{X\in S(\beta) \\ t=3 }}\Theta_{\theta} \begin{bmatrix} m_{1} & X\\
 0 & m_{2}\end{bmatrix}\overline{\psi_A({m_{1}}^{-1}X})=q^{7}(q-1)(q^{2}-1)$.\\
\item[b)] $K_{5}= \displaystyle \sum_{\beta \in F^{\times}} \sum_{\substack{X\in S(\beta) \\ t=2 }}\Theta_{\theta} \begin{bmatrix} m_{1} & X\\
 0 & m_{2}\end{bmatrix}\overline{\psi_A({m_{1}}^{-1}X})=-q^{7}(q-1)^{2}.$
\end{itemize}
It follows that 
\begin{equation} \label{A_2(Type(1,1)} A_{2}= \sum_{\beta \in F^{\times}}\sum_{X \in S(\beta)}\Theta_{\theta} \begin{bmatrix} m_{1} & X\\
 0 & m_{2}\end{bmatrix}\overline{\psi_A({m_{1}}^{-1}X})=q^{8}(q-1)^{2}.\end{equation}
 From \eqref{A_1(Type(1,1)} and \eqref{A_2(Type(1,1)}, it follows that 
 \[ \Theta_{N,\psi_A}(m)= (1-q)^2.\]
\end{proof}

\begin{theorem} \label{Type(1,2)}
Let $m \in T(1,2)$. Then, we have \[\Theta_{N,\psi_{A}}(m)=(1-q).\]
\end{theorem}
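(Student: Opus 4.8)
The plan is to run the same machine as in Theorem~\ref{Type(1,1)}, but to exploit a simplification coming from the fact that $m_2$ is now of Type-2. Write $m=(m_1,m_2)$ with $m_1$ of Type-1 and $m_2$ of Type-2; both are unipotent, so
\[\Theta_{N,\psi_A}(m)=\frac{1}{q^9}\sum_{X\in\M(3,F)}\Theta_\theta\begin{bmatrix} m_1 & X\\ 0 & m_2\end{bmatrix}\overline{\psi_A(m_1^{-1}X)},\]
and for $h=\begin{bmatrix} m_1 & X\\ 0 & m_2\end{bmatrix}$, which is unipotent, Theorem~\ref{character value of cuspidal representation (Dipendra)} collapses each summand to the single integer $t=\dim\Ker(h-1)$ via $\Theta_\theta(h)=-(1-q)(1-q^2)\cdots(1-q^{t-1})$, while Proposition~\ref{dim(ker(h-1))} computes $t$. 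First I would record the relevant linear algebra: $\dim\Ker(m_1-1)=2$ with $\Image(m_1-1)=\Span\{e_1\}$, and $\dim\Ker(m_2-1)=1$ with $W'=\Ker(m_2-1)=\Span\{e_1\}$. It is precisely the one-dimensionality of $W'$ that makes this case dramatically shorter than Theorem~\ref{Type(1,1)}.

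Next I would isolate two observations that govern the whole sum. First, for any element of the mirabolic $M_1$ (in particular a Type-1 element) the last row is $(0,0,1)$, hence $Am_1^{-1}=A$; the twisting character is therefore $\overline{\psi_0(\tr(AX))}=\overline{\psi_0(X_{31})}$, and $S(\beta)$ reduces to $\{X:X_{31}=\beta\}$. Second, by Remark~\ref{X} we have $XW'=\Span\{Xe_1\}$, the span of the first column of $X$, so in Proposition~\ref{dim(ker(h-1))} both $\dim(XW')$ and $\dim(XW'\cap\Image(m_1-1))$ depend only on the first column $v=(a,c,p)^\top$ of $X$ (with $p=X_{31}$). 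The proposition then gives $t=3-\dim(XW')+\dim(XW'\cap\Span\{e_1\})$, whence $t=3$ when $v\in\Span\{e_1\}$ (that is, $c=p=0$, including $v=0$) and $t=2$ when $(c,p)\neq(0,0)$; correspondingly $\Theta_\theta(h)$ equals $-(1-q)(1-q^2)$ or $-(1-q)$.

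Since neither $t$ nor the weight $\overline{\psi_0(p)}$ involves the other six entries of $X$, those entries are free and contribute a factor $q^6$, reducing the computation to
\[\Theta_{N,\psi_A}(m)=\frac{1}{q^3}\sum_{(a,c,p)\in F^3}\Theta_\theta(h)\,\overline{\psi_0(p)}.\]
I would split this over three loci: $c=p=0$ (giving $t=3$, weight $1$); $p=0,\ c\neq0$ (giving $t=2$, weight $1$); and $p\neq0$ (giving $t=2$, weight $\overline{\psi_0(p)}$). Using $\sum_{p\neq0}\overline{\psi_0(p)}=-1$, the three contributions are $-q(1-q)(1-q^2)$, $q(q-1)^2$, and $-q^2(q-1)$; factoring out $q(q-1)$ these collapse to $-q^3(q-1)$, and dividing by $q^3$ yields $\Theta_{N,\psi_A}(m)=1-q$.

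All the conceptual content sits in Proposition~\ref{dim(ker(h-1))}; once $t$ is seen to depend only on the first column, the rest is a one-line character-sum identity. The two points I would be careful about are the verification $Am_1^{-1}=A$ (this is what aligns the twist $\overline{\psi_A(m_1^{-1}X)}$ with the slicing by $X_{31}$, and hence with the sets $S(\beta)$) and the correct treatment of the boundary $v\in\Span\{e_1\}$ separating $t=2$ from $t=3$. I would deliberately avoid the large partition tables of Theorem~\ref{Type(1,1)}, since the one-dimensionality of $W'$ makes the direct three-way split both cleaner and essentially free of bookkeeping.
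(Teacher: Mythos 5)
Your proposal is correct and takes essentially the same route as the paper: both rest on Proposition~\ref{dim(ker(h-1))} and Theorem~\ref{character value of cuspidal representation (Dipendra)}, and your three loci $\{c=p=0\}$, $\{p=0,\ c\neq 0\}$, $\{p\neq 0\}$ are exactly the paper's partition, namely its two $t=3$ cases and one $t=2$ case for $S(0)$ together with its single case for $S(\beta)$ with $\beta\neq 0$. The only difference is organizational: by noting $Am_1^{-1}=A$ and that $t$ depends only on the first column of $X$, you factor out the $q^{6}$ irrelevant entries and dispense with the paper's cardinality tables, arriving at the same value $(1-q)$.
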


\begin{proof}
We have \[ \Theta_{N,\psi_{A}}(m)=\frac{1}{|N|} \sum_{ X \in M(3,F)}\Theta_{\theta} \begin{bmatrix} m_{1} & X\\
 0 & m_{2}\end{bmatrix}\overline{\psi_{A}({m_{1}}^{-1}X}).\]
Note that $\dim\Ker(m_1-1)=2$, $\Image(m_1-1)=\Span \left \{e_1 \right \}$ and $\Ker(m_2-1)=\Span \left \{ e_1 \right \}$.  
To calculate the character value, we write 
 \[ \Theta_{N,\psi_{A}}(m)=\frac{1}{q^{9}} (B_{1}+B_{2})\]
 where
 \[ B_{1}= \sum_{X \in S(0)}\Theta_{\theta} \begin{bmatrix} m_{1} & X\\
 0 & m_{2}\end{bmatrix}\overline{\psi_{A}({m_{1}}^{-1}X})\]
  and \[ B_{2}=\sum_{\beta \in F^{\times}}\sum_{X \in S(\beta)}\Theta_{\theta} \begin{bmatrix} m_{1} & X\\
 0 & m_{2}\end{bmatrix}\overline{\psi_{A}({m_{1}}^{-1}X}).\]
For simplicity, we let $t=\dim(\Ker(h-1))$. To compute $B_{1}$, we find a partition of $S(0)$ according to the value of $t$ and compute the respective cardinalities. We record the details in the table below. 

\begin{center}
\captionof{table}{$B_1$}
\label{B_1}
\scriptsize
\scriptsize
\resizebox{1\textwidth}{!}{
    \begin{tabular}{| c | c | c | c | c | c |}
    \hline
  &  & &    &  &  \\
  & Partition of $S(0)$ & $\dim(XW')$ & $\dim(XW' \cap \Image(m_1-1))$ & $t=\dim(\Ker(h-1))$ & Cardinality\\
  & & & &  & \\
   \hline 
   
 & & & & &\\
$1)$ & $\left \{\begin{bmatrix} 0 & b & e  \\ 0 & d & f\\
0 & k & l \end{bmatrix} \right \}$ & $0$ & $0$ & $3$ & $q^6$\\
 & & & & & \\
\hline
& & & & & \\
$2) (a)$ & $\left \{ \begin{bmatrix} a & b & e\\
         c & d & f\\
         0 & k & l         \end{bmatrix} \,\middle \vert\, c \in F^{\times} \right \} $  & $1$ & $0$ & $2$ & $(q-1)q^7$\\
 & & & & &\\
\hline
& & & & & \\
$2)(b)$ & $\left \{ \begin{bmatrix} a & b & e\\
         0 & d & f\\
         0 & k & l         \end{bmatrix} \,\middle \vert\, a \in F^{\times} \right \} $  & $1$ & $1$ & $3$ & $(q-1)q^6$\\
 & & & & &\\
\hline

    \end{tabular} 
    }
\end{center}
Hence,
\[B_1= K_{1}+K_{2}\]
where
\begin{itemize}
\item[a)]$K_{1}= \displaystyle \sum_{\substack{X\in S(0) \\ t=3 }}\Theta_{\theta} \begin{bmatrix} m_{1} & X\\
 0 & m_{2}\end{bmatrix}\overline{\psi_{A}({m_{1}}^{-1}X})=-q^{7}(1-q)(1-q^{2}).$\\
\item[b)]$K_{2}= \displaystyle \sum_{\substack{X\in S(0) \\ t=2 }}\Theta_{\theta} \begin{bmatrix} m_{1} & X\\
 0 & m_{2}\end{bmatrix}\overline{\psi_{A}({m_{1}}^{-1}X})=q^{7}(1-q)^{2}$.\\
\end{itemize}
It follows that \begin{equation} 
\label{B_1(Type(1,2)} B_{1}=\sum_{X \in S(0)}\Theta_{\theta} \begin{bmatrix} m_{1} & X\\
 0 & m_{2}\end{bmatrix} \overline{\psi_{A}({m_{1}}^{-1}X})=-q^{8}(q-1)^{2}. \end{equation}
Proceeding in a similar way, we find a partition of $S(\beta)$ to compute $B_{2}$. We record the details in the table below.  
\begin{center}
\captionof{table}{$B_{2}$}
\label{B_2}
\scriptsize
\scriptsize
\resizebox{1\textwidth}{!}{
    \begin{tabular}{| c | c | c | c | c | c |}
    \hline
  &  & &    &  &  \\
  & Partition of $S(\beta)$ & $\dim(XW')$ & $\dim(XW' \cap \Image(m_1-1))$ & $t=\dim(\Ker(h-1))$ & Cardinality\\
  & & & &  & \\
   \hline 
   
 & & & & &\\
$1)$ & $\left \{\begin{bmatrix} a & b & e  \\ c & d & f\\
\beta & k & l \end{bmatrix} \right \}$ & $1$ & $0$ & $2$ & $q^8$\\
 & & & & & \\
\hline
    \end{tabular} 
    }
\end{center}
Hence,
\begin{equation}\label{B_2(Type(1,2)} B_{2}=\displaystyle \sum_{\beta \in F^{\times}} \sum_{\substack{X\in S(\beta) \\ t=2 }}\Theta_{\theta} \begin{bmatrix} m_{1} & X\\0 & m_{2}\end{bmatrix}\overline{\psi_{A}({m_{1}}^{-1}X})=q^{8}(1-q).\end{equation}
From \eqref{B_1(Type(1,2)} and \eqref{B_2(Type(1,2)}, it follows that 
 \[ \Theta_{N,\psi_A}(m)= (1-q).\]
 \end{proof}

\begin{theorem} \label{Type(4,1)}
Let $m \in T(4,1)$. Then, we have \[\Theta_{N,\psi_A}(m)=(1-q)^2(1-q^2).\]
\end{theorem}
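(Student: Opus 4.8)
The plan is to compute $\Theta_{N,\psi_A}(m)$ directly from the character formula for the twisted Jacquet module, reducing everything to a counting problem via Proposition \ref{dim(ker(h-1))}, exactly as in the proofs of Theorems \ref{Type(1,1)} and \ref{Type(1,2)}. First I would fix representatives: for $m\in T(4,1)$ the block $m_1$ is the $3\times 3$ identity (Type-4 of $M_1$) and $m_2=\begin{bmatrix}1&0&y_{13}\\0&1&y_{23}\\0&0&1\end{bmatrix}$ with $y_{23}\in F^{\times}$ (Type-1 of $M_2$). Because $m_1=1$ we have $m_1^{-1}X=X$, so the phase simplifies to $\overline{\psi_A(m_1^{-1}X)}=\overline{\psi_0(\tr(AX))}$ and $S(\beta)=\{X:\tr(AX)=\beta\}=\{X:X_{31}=\beta\}$.

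Next I would record the linear-algebra invariants feeding Proposition \ref{dim(ker(h-1))}: since $m_1=1$ we have $\dim\Ker(m_1-1)=3$ and $\Image(m_1-1)=\{0\}$, while $m_2-1$ has $y_{23}\neq 0$, forcing $W'=\Ker(m_2-1)=\Span\{e_1,e_2\}$, so $\dim\Ker(m_2-1)=2$. The intersection term $\dim\{XW'\cap\Image(m_1-1)\}$ therefore vanishes, and Proposition \ref{dim(ker(h-1))} collapses to $t:=\dim\Ker(h-1)=5-\dim(XW')$. By Remark \ref{X}, $XW'=\Span\{X_1,X_2\}$ is the span of the first two columns of $X$, so $t\in\{3,4,5\}$ according as $\dim\Span\{X_1,X_2\}\in\{2,1,0\}$; Theorem \ref{character value of cuspidal representation (Dipendra)} then gives $\Theta_\theta(h)=-(1-q)(1-q^2)\cdots(1-q^{t-1})$.

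Then I would split $\Theta_{N,\psi_A}(m)=\tfrac1{q^9}(C_1+C_2)$ with $C_1=\sum_{X\in S(0)}\Theta_\theta(h)$ and $C_2=\sum_{\beta\neq 0}\psi_0(\beta)\sum_{X\in S(\beta)}\Theta_\theta(h)$. Both the trace constraint $X_{31}=\beta$ and the quantity $\dim\Span\{X_1,X_2\}$ depend only on the first two columns, with the third column free (a factor $q^3$), so the enumeration reduces to counting pairs $(X_1,X_2)$ of prescribed span-dimension with $X_{31}=\beta$. Scaling the first column shows $\sum_{X\in S(\beta)}\Theta_\theta(h)$ is independent of $\beta\neq 0$, and since $\sum_{\beta\neq 0}\psi_0(\beta)=-1$ this collapses $C_2$ to a single inner sum with a sign. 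The main obstacle is purely the bookkeeping: assembling the cardinalities for each value of $\dim\Span\{X_1,X_2\}$ in the two cases $\beta=0$ and $\beta\neq 0$ and combining the resulting polynomials. Carrying this out yields $C_1=-q^8(1-q)^3(1-q^2)$ and $C_2=q^8(1-q)^2(1-q^2)$, whence $C_1+C_2=q^9(1-q)^2(1-q^2)$ and $\Theta_{N,\psi_A}(m)=(1-q)^2(1-q^2)$. I note that this computation is genuinely simpler than that for $T(1,1)$, since $\Image(m_1-1)=\{0\}$ removes the intersection term that complicated the earlier partitions; by Theorem \ref{T(i,j)=T(j,i)} it also delivers the value on $T(1,4)$.
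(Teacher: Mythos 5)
Your proposal is correct and follows essentially the same route as the paper: the same splitting $\Theta_{N,\psi_A}(m)=\tfrac{1}{q^9}(C_1+C_2)$ according to the value of $\tr(AX)$, the same use of Proposition \ref{dim(ker(h-1))} and Remark \ref{X} to get $\dim\Ker(h-1)=5-\dim\Span\{X_1,X_2\}$, and the same intermediate values $C_1=-q^8(1-q)^3(1-q^2)$ and $C_2=q^8(1-q)^2(1-q^2)$. The only divergence is bookkeeping: where the paper tabulates partitions of $S(0)$ and $S(\beta)$ into many matrix-shape classes, you exploit the vanishing intersection term and the freeness of the third column to count column pairs by span dimension, plus a scaling argument (the analogue of Lemma \ref{equal cardinality}) for $\beta$-independence --- a leaner but equivalent enumeration.
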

 
\begin{proof} We have 
\[\Theta_{N,\psi_A}(m)=\frac{1}{|N|} \sum_{ X \in M(3,F)}\Theta_{\theta} \begin{bmatrix} m_{1} & X\\
 0 & m_{2}\end{bmatrix}\overline{\psi_{A}({m_{1}}^{-1}X}).\]
Note that $\dim\Ker(m_1-1)=3$, $\Image(m_1-1)= \{0\}$ and $\Ker(m_2-1)=\Span \left \{ e_1,e_2 \right \}$.
To calculate the character value, we write 
 \[ \Theta_{N,\psi_{A}}(m)=\frac{1}{q^{9}} (C_{1}+C_{2})\]
where we have 
 \[ C_{1}= \sum_{X \in S(0)}\Theta_{\theta} \begin{bmatrix} m_{1} & X\\
 0 & m_{2}\end{bmatrix}\overline{\psi_{A}({m_{1}}^{-1}X})\]
  and \[ C_{2}=\sum_{\beta \in F^{\times}}\sum_{X \in S(\beta)}\Theta_{\theta} \begin{bmatrix} m_{1} & X\\
 0 & m_{2}\end{bmatrix}\overline{\psi_{A}({m_{1}}^{-1}X}).\]
 For simplicity, we let $t=\dim(\Ker(h-1))$. To compute $C_{1}$, we find a partition of $S(0)$ according to the value of $t$ and compute the respective cardinalities. We record the details in the following table. 
 \begin{center}
\captionof{table}{$C_1$}
\label{C_1}
\scriptsize
\scriptsize
\resizebox{1\textwidth}{!}{
    \begin{tabular}{| c | c | c | c | c | c |}
    \hline
  &  & &    &  &  \\
  & Partition of $S(0)$ & $\dim(XW')$ & $\dim(XW' \cap \Image(m_1-1))$ & $t=\dim(\Ker(h-1))$ & Cardinality\\
  & & & &  & \\
   \hline 
   
 & & & & &\\
$1) (a)$ & $\left \{\begin{bmatrix} 0 & 0 & e  \\ 0 & 0 & f\\
0 & 0 & l \end{bmatrix} \right \}$ & $0$ & $0$ & $5$ & $q^3$\\
 & & & & & \\
\hline
& & & & & \\
$1)(b)$ & $\left \{ \begin{bmatrix} 0 & 0 & e\\
         0 & 0 & f\\
         0 & k & l         \end{bmatrix} \,\middle \vert\, k \in F^{\times} \right \} $  & $1$ & $0$ & $4$ & $(q-1)q^3$\\
 & & & & &\\
\hline
& & & & & \\
$2)(a)$ & $\left \{ \begin{bmatrix} a & b & e\\
         c & d & f\\
         0 & 0 & l         \end{bmatrix} \,\middle \vert\, ad-bc \neq 0 \right \} $  & $2$ & $0$ & $3$ & $(q^2-1)(q^2-q)q^3$\\
 & & & & &\\
\hline
& & & & & \\
$2)(b)$ & $\left \{ \begin{bmatrix} a & b & e\\
         c & d & f\\
         0 & k & l         \end{bmatrix} \,\middle \vert\, k \in F^{\times}, ad-bc \neq 0 \right \} $  & $2$ & $0$ & $3$ & $(q^2-1)(q-1)^2q^4$\\
 & & & & &\\
\hline
& & & & & \\
$3)(a)$ & $\left \{ \begin{bmatrix} a & \gamma a & e\\
         c & \gamma c & f\\
         0 & 0 & l         \end{bmatrix} \,\middle \vert\, \gamma \in F \right \} $  & $1$ & $0$ & $4$ & $(q^2-1)q^4$\\
 & & & & &\\
\hline
& & & & & \\
$3)(b)$ & $\left \{ \begin{bmatrix} a & \gamma a & e\\
         c & \gamma c & f\\
         0 & k & l         \end{bmatrix} \,\middle \vert\, k \in F^{\times}, \gamma \in F \right \} $  & $2$ & $0$ & $3$ & $(q^2-1)(q-1)q^4$\\
 & & & & &\\
\hline
& & & & & \\
$4)(a)$ & $\left \{ \begin{bmatrix} 0 & b & e\\
         0 & d & f\\
         0 & 0 & l         \end{bmatrix} \right \} $  & $1$ & $0$ & $4$ & $(q^2-1)q^3$\\
 & & & & &\\
\hline
& & & & &\\
$4)(b)$ & $\left \{ \begin{bmatrix} 0 & b & e\\
         0 & d & f\\
         0 & k & l         \end{bmatrix} \,\middle \vert\, k \in F^{\times} \right \} $  & $1$ & $0$ & $4$ & $(q^2-1)(q-1)q^3$\\
 & & & & &\\
\hline
    \end{tabular} 
    }
\end{center}
Hence, 
 \[C_1= K_1+K_2+K_3\]
 where
 \begin{itemize}
\item[a)]$K_1= \displaystyle \sum_{\substack{X\in S(0) \\ t= 5}}\Theta_{\theta} \begin{bmatrix} m_1 & X\\
 0 & m_2\end{bmatrix}\overline{\psi_A({m_1}^{-1}X})=-q^{3}(1-q)(1-q^2)(1-q^3)(1-q^4)$.\\
\item[b)]$K_2 = \displaystyle \sum_{\substack{X\in S(0) \\ t= 4}}\Theta_{\theta} \begin{bmatrix} m_1 & X\\
 0 & m_2\end{bmatrix}\overline{\psi_A({m_1}^{-1}X})=q^3(1-q)^2(1-q^2)(1-q^3)(2q^2+2q+1)$.\\
\item[c)] $K_3= \displaystyle \sum_{\substack{X\in S(0) \\ t= 3 }}\Theta_{\theta} \begin{bmatrix} m_1 & X\\
 0 & m_2\end{bmatrix}\overline{\psi_A({m_1}^{-1}X})=-q^{4}(q^2-1)^2(1-q)(1-q^2).$\\
\end{itemize}
It follows that \begin{equation} \label{C_1(Type(4,1)} C_1=\sum_{X \in S(0)}\Theta_{\theta} \begin{bmatrix} m_1 & X\\
 0 & m_2\end{bmatrix}\overline{\psi_A({m_1}^{-1}X})=-q^{8}(1-q)^3(1-q^2).\end{equation}
Proceeding in a similar way, we find a partition of $S(\beta)$ to compute $C_{2}$. We record the details in the table below. 
\begin{center}
\captionof{table}{$C_2$}
\label{C_2}
\scriptsize
\scriptsize
\resizebox{1\textwidth}{!}{
    \begin{tabular}{| c | c | c | c | c | c |}
    \hline
  &  & &    &  &  \\
  & Partition of $S(\beta)$ & $\dim(XW')$ & $\dim(XW' \cap \Image(m_1-1))$ & $t=\dim(\Ker(h-1))$ & Cardinality\\
  & & & &  & \\
   \hline 
   
 & & & & &\\
$1) (a)$ & $\left \{\begin{bmatrix} 0 & 0 & e  \\ 0 & 0 & f\\
\beta & 0 & l \end{bmatrix} \right \}$ & $1$ & $0$ & $4$ & $q^3$\\
 & & & & & \\
\hline
& & & & & \\
$1)(b)$ & $\left \{ \begin{bmatrix} 0 & 0 & e\\  
         0 & 0 & f\\
         \beta & k & l         \end{bmatrix} \,\middle \vert\, k \in F^{\times} \right \} $  & $1$ & $0$ & $4$ & $(q-1)q^3$\\
 & & & & &\\
\hline
& & & & & \\
$2)(a)$ & $\left \{ \begin{bmatrix} a & b & e\\
         c & d & f\\
         \beta & 0 & l         \end{bmatrix} \,\middle \vert\, ad-bc \neq 0 \right \} $  & $2$ & $0$ & $3$ & $(q^2-1)(q^2-q)q^3$\\
 & & & & &\\
\hline
& & & & & \\
$2)(b)$ & $\left \{ \begin{bmatrix} a & b & e\\
         c & d & f\\
         \beta & k & l         \end{bmatrix} \,\middle \vert\, k \in F^{\times},ad-bc \neq 0 \right \} $  & $2$ & $0$ & $3$ & $(q^2-1)(q-1)^2q^4$\\
 & & & & &\\
\hline
& & & & & \\
$3)(a)$ & $\left \{ \begin{bmatrix} a & \gamma a & e\\
         c & \gamma c & f\\
         \beta & 0 & l  \end{bmatrix} \,\middle \vert\, \gamma \in F^{\times} \right \} $  & $2$ & $0$ & $3$ & $(q^2-1)(q-1)q^3$\\
 & & & & &\\
\hline
& & & & & \\
$3)(b)$ & $\left \{ \begin{bmatrix} a & 0 & e\\
         c & 0 & f\\
         \beta & 0 & l         \end{bmatrix} \,\middle \vert\, (a,c) \neq 0 \right \} $  & $1$ & $0$ & $4$ & $(q^2-1)q^3$\\
 & & & & &\\
\hline
& & & & & \\
$3)(c)$ & $\left \{ \begin{bmatrix} a & 0 & e\\
         c & 0 & f\\
         \beta & k & l         \end{bmatrix} \,\middle \vert\, (a,c) \neq 0, k \in F^{\times} \right \} $  & $2$ & $0$ & $3$ & $(q^2-1)(q-1)q^3$\\
 & & & & &\\
\hline
& & & & & \\
$3)(d)$ & $\left \{ \begin{bmatrix} a & \gamma a & e\\
         c & \gamma c & f\\
         \beta & k & l         \end{bmatrix} \,\middle \vert\, \gamma \in F^{\times}, k=\gamma \beta \right \} $  & $1$ & $0$ & $4$ & $(q^2-1)(q-1)q^3$\\
 & & & & &\\
\hline
& & & & & \\
$3)(e)$ & $\left \{ \begin{bmatrix} a & \gamma a & e\\
         c & \gamma c & f\\
         \beta & k & l         \end{bmatrix} \,\middle \vert\, k, \gamma \in F^{\times}, k \neq \gamma \beta \right \} $  & $2$ & $0$ & $3$ & $(q^2-1)(q-1)(q-2)q^3$\\
 & & & & &\\
\hline
& & & & &\\
$4)(a)$ & $\left \{ \begin{bmatrix} 0 & b & e\\
         0 & d & f\\
         \beta & 0 & l         \end{bmatrix} \,\middle \vert\, (b,d) \neq 0 \right \} $  & $2$ & $0$ & $3$ & $(q^2-1)q^3$\\
 & & & & &\\
\hline
& & & & & \\
$4)(b)$ & $\left \{ \begin{bmatrix} 0 & b & e\\
         0 & d & f\\
         \beta & k & l         \end{bmatrix} \,\middle \vert\, (b,d) \neq 0, k \in F^{\times} \right \} $  & $2$ & $0$ & $3$ & $(q^2-1)(q-1)q^3$\\
 & & & & &\\
\hline
    \end{tabular}
    }
    \end{center}
We have 
\[ C_2=K_{4}+K_{5}\]
where 
\begin{itemize}
\item[a)] $ K_{4}= \displaystyle \sum_{\beta \in F^{\times}} \sum_{\substack{X\in S(\beta) \\ t=4 }}\Theta_{\theta} \begin{bmatrix} m_1 & X\\
 0 & m_2\end{bmatrix}\overline{\psi_A({m_1}^{-1}X})=q^6(1-q)(1-q^2)(1-q^3).$\\
 \item[b)] $ K_5= \displaystyle \sum_{\beta \in F^{\times}} \sum_{\substack{X\in S(\beta) \\ t=3 }}\Theta_{\theta} \begin{bmatrix} m_1 & X\\
 0 & m_2\end{bmatrix}\overline{\psi_A({m_1}^{-1}X})=-q^6(1-q)(1-q^2)^2.$
\end{itemize}
It follows that 
\begin{equation} \label{C_2(Type(4,1)} C_2= \sum_{\beta \in F^{\times}}\sum_{X \in S(\beta)}\Theta_{\theta} \begin{bmatrix} m_1 & X\\
 0 & m_2\end{bmatrix}\overline{\psi_A({m_1}^{-1}X})=q^8(1-q)^2(1-q^2).\end{equation}
 From \eqref{C_1(Type(4,1)} and \eqref{C_2(Type(4,1)}, we have 
 \[ \Theta_{N,\psi_A}(m)= (1-q^2)(1-q)^2.\]
 \end{proof}
 
\begin{remark}
Let $m\in T(1,4)$. Since \[\Theta_{N,\psi_{A}}(m)=\Theta_{N,\psi_{A}}(m')\] for some $m' \in T(4,1)$, it is enough to compute $\Theta_{N,\psi_{A}}(m')$ for $m' \in T(4,1)$ to obtain the character value $\Theta_{N,\psi_A}(m)$.
\end{remark}

\begin{theorem} \label{Type(2,2)}
Let $m \in T(2,2)$.  Then, we have \[\Theta_{N,\psi_{A}}(m)=1.\]
\end{theorem}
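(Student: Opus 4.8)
The plan is to follow the same template as the preceding theorems (Theorems~\ref{Type(1,1)}, \ref{Type(1,2)}, \ref{Type(4,1)}): start from the character formula
\[\Theta_{N,\psi_A}(m)=\frac{1}{q^9}\sum_{X\in \M(3,F)}\Theta_\theta\begin{bmatrix} m_1 & X \\ 0 & m_2\end{bmatrix}\overline{\psi_A(m_1^{-1}X)},\]
split the sum over $X$ according to the value of $\beta=\tr(AX)$, that is, over $S(0)$ and over $\bigcup_{\beta\neq 0}S(\beta)$, and evaluate each $\Theta_\theta$ via Theorem~\ref{character value of cuspidal representation (Dipendra)}. Since $m_1,m_2$ are unipotent, $h=\begin{bmatrix}m_1 & X\\0 & m_2\end{bmatrix}$ is unipotent with single eigenvalue $z=1\in F^\times$ and $d=1$, so the value depends only on $t=\dim\Ker(h-1)$ and equals $-\prod_{j=1}^{t-1}(1-q^j)$; in particular $\Theta_\theta(h)=q-1$ when $t=2$ and $\Theta_\theta(h)=-1$ when $t=1$.

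The crucial step is computing $t$ through Proposition~\ref{dim(ker(h-1))}. For $m_1$ of Type-$2$ one has $\dim\Ker(m_1-1)=1$ and $\Image(m_1-1)=\Span\{e_1,e_2\}$, and for $m_2$ of Type-$2$ one has $W'=\Ker(m_2-1)=\Span\{e_1\}$, so that $XW'=\Span\{X_1\}$ by Remark~\ref{X}, where $X_1$ is the first column of $X$. Because $m_1$ is upper unitriangular we have $Am_1^{-1}=A$, hence $\tr(Am_1^{-1}X)=\tr(AX)=X_{31}$, and the partition parameter $\beta$ is exactly the third coordinate of $X_1$. The point I would emphasize is that this makes $t$ constant on each piece: on $S(0)$ the column $X_1$ lies in $\Span\{e_1,e_2\}=\Image(m_1-1)$, so $XW'\cap\Image(m_1-1)=XW'$ and $t=1+1-\dim(XW')+\dim(XW')=2$ regardless of whether $X_1$ vanishes; on $S(\beta)$ with $\beta\neq 0$ the column $X_1\notin\Span\{e_1,e_2\}$, forcing $\dim(XW')=1$ and $\dim(XW'\cap\Image(m_1-1))=0$, so $t=1$. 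This uniformity is what makes Type-$(2,2)$ the easiest case: unlike Theorem~\ref{Type(1,1)}, no refined tabulation of subcases is needed.

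Granting this, the two sums are immediate. Each of $S(0)$ and $S(\beta)$ has cardinality $q^8$. The $S(0)$ contribution is $q^8(q-1)$ since $\overline{\psi_0(0)}=1$. For the complementary sum, each $X\in S(\beta)$ contributes $-\overline{\psi_0(\beta)}$, so it equals $-q^8\sum_{\beta\in F^\times}\overline{\psi_0(\beta)}=q^8$, using $\sum_{\beta\in F}\psi_0(\beta)=0$ for the nontrivial character $\psi_0$ (equivalently $\sum_{\beta\neq 0}\overline{\psi_0(\beta)}=-1$). Adding and dividing by $q^9$ gives
\[\Theta_{N,\psi_A}(m)=\frac{1}{q^9}\big(q^8(q-1)+q^8\big)=1.\]
The only place requiring care is the kernel-dimension bookkeeping via Proposition~\ref{dim(ker(h-1))}; once $\Image(m_1-1)$ and $\Ker(m_2-1)$ are pinned down, everything else reduces to routine character-sum orthogonality.
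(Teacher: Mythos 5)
Your proof is correct and follows essentially the same route as the paper's: the same character formula, the same partition of $\M(3,F)$ into $S(0)$ and the sets $S(\beta)$ with $\beta\neq 0$, and the same use of Proposition~\ref{dim(ker(h-1))} to get $t=2$ on $S(0)$ and $t=1$ on $S(\beta)$, yielding $\frac{1}{q^9}\bigl(q^8(q-1)+q^8\bigr)=1$. Your only (welcome) streamlining is observing that $X_1\in\Image(m_1-1)$ makes $t$ constant on all of $S(0)$, so the paper's further subdivision of $S(0)$ according to whether the first column vanishes is unnecessary.
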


\begin{proof}
 We have \[ \Theta_{N,\psi_A}(m)=\frac{1}{|N|} \sum_{ X \in M(3,F)}\Theta_{\theta} \begin{bmatrix} m_1 & X\\
 0 & m_2\end{bmatrix}\overline{\psi_A({m_1}^{-1}X}).\]
 Note that $\dim\Ker(m_1-1)=1$, $\Image(m_1-1)=\Span \left \{ e_1,e_2 \right \}$ and $\Ker(m_2-1)=\Span \left \{ e_1 \right \}$. To calculate the character value, we write 
 \[ \Theta_{N,\psi_A}(m)=\frac{1}{q^9} (D_1+D_2)\]
 where 
 \[ D_1= \sum_{X \in S(0)}\Theta_{\theta} \begin{bmatrix} m_1 & X\\
 0 & m_2\end{bmatrix}\overline{\psi_A({m_1}^{-1}X})\]
  and \[ D_2=\sum_{\beta \in F^{\times}}\sum_{X \in S(\beta)}\Theta_{\theta} \begin{bmatrix} m_1 & X\\
 0 & m_2\end{bmatrix}\overline{\psi_A({m_1}^{-1}X}).\]
 Let $t=\dim(\Ker(h-1))$. To compute $D_{1}$ we find a partition of $S(0)$ according to the value of $t$ and compute the respective cardinalities. We record the details in the table below.  
 \begin{center}
\captionof{table}{$D_1$}
\label{D_1}
\scriptsize
\scriptsize
\resizebox{1\textwidth}{!}{
    \begin{tabular}{| c | c | c | c | c | c |}
    \hline
  &  & &    &  &  \\
  & Partition of $S(0)$ & $\dim(XW')$ & $\dim(XW' \cap \Image(m_1-1))$ & $t=\dim(\Ker(h-1))$ & Cardinality\\
  & & & &  & \\
   \hline 
   
 & & & & & \\
$1) (a)$ & $\left \{\begin{bmatrix} a & b & e  \\ c & d & f\\
0 & k & l \end{bmatrix} \mid (a,c) \neq 0 \right \}$ & $1$ & $1$ & $2$ & $(q^2-1)q^6$\\
 & & & & & \\
\hline
& & & & & \\
$1)(b)$ & $\left \{ \begin{bmatrix} 0 & b & e\\
         0 & d & f\\
         0 & k & l         \end{bmatrix} \right \} $  & $0$ & $0$ & $2$ & $q^6$\\
 & & & & &\\
\hline
    \end{tabular} 
    }
\end{center}
Hence,
 \begin{equation} \label{D_1(Type(2,2)} D_1=\displaystyle \sum_{\substack{X\in S(0) \\ t=2 }}\Theta_{\theta} \begin{bmatrix} m_1 & X\\0 & m_2\end{bmatrix}\overline{\psi_A({m_1}^{-1}X})=-q^8(1-q).
 \end{equation} 
Proceeding in a similar way, we find a partition of $S(\beta)$ to compute $D_{2}$. We record the details in the following table. 
\begin{center}
\captionof{table}{$D_2$}
\label{D_2}
\scriptsize
\scriptsize
\resizebox{1\textwidth}{!}{
    \begin{tabular}{| c | c | c | c | c | c |}
    \hline
  &  & &    &  &  \\
  & Partition of $S(\beta)$ & $\dim(XW')$ & $\dim(XW' \cap \Image(m_1-1))$ & $t=\dim(\Ker(h-1))$ & Cardinality\\
  & & & &  & \\
   \hline 
   
 & & & & & \\
$1)$ & $\left \{\begin{bmatrix} a & b & e  \\ c & d & f\\
\beta & k & l \end{bmatrix} \right \}$ & $1$ & $0$ & $1$ & $q^8$\\
 & & & & & \\
\hline
    \end{tabular} 
    }
\end{center}
Thus, we have
\begin{equation}\label{D_2(Type(2,2)} D_2=\displaystyle \sum_{\beta \in F^{\times}} \sum_{\substack{X\in S(\beta) \\ t=1 }}\Theta_{\theta} \begin{bmatrix} m_1 & X\\0 & m_2\end{bmatrix}\overline{\psi_A({m_1}^{-1}X})=q^8.\end{equation}

 From \eqref{D_1(Type(2,2)} and \eqref{D_2(Type(2,2)}, it follows that 
 \[ \Theta_{N,\psi_A}(m)= 1.\]
 \end{proof}

\begin{theorem} \label{Type(4,2)}
 Let $m \in T(4,2)$. Then, we have \[\Theta_{N,\psi_A}(m)=(1-q)(1-q^2).\]
 \end{theorem}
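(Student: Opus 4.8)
The plan is to follow the same template used in Theorems~\ref{Type(1,1)}--\ref{Type(2,2)} and reduce the whole computation to the dimension count of Proposition~\ref{dim(ker(h-1))}. First I would record the linear-algebra data for $m=(m_1,m_2)\in T(4,2)$. Here $m_1=I_3$ is the Type-4 element, so $\dim\Ker(m_1-1)=3$ and $\Image(m_1-1)=\{0\}$; and $m_2$ is the Type-2 unipotent matrix, for which a direct computation gives $W'=\Ker(m_2-1)=\Span\{e_1\}$, one-dimensional. Since $m_1^{-1}=I_3$, the twisting character reduces to $\overline{\psi_A(m_1^{-1}X)}=\overline{\psi_0(\tr(AX))}=\overline{\psi_0(X_{31})}$, and correspondingly $S(\beta)=\{X\in\M(3,F)\mid X_{31}=\beta\}$. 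I would then split the character as $\Theta_{N,\psi_A}(m)=\frac{1}{q^9}(C_1+C_2)$, with $C_1$ summed over $S(0)$ and $C_2$ over $\bigsqcup_{\beta\in F^{\times}}S(\beta)$, exactly as in Theorem~\ref{Type(4,1)}.

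The key simplification in this case is that $\Image(m_1-1)=\{0\}$, so the intersection term in Proposition~\ref{dim(ker(h-1))} vanishes identically and $t:=\dim\Ker(h-1)=4-\dim(XW')$. By Remark~\ref{X}, $XW'$ is spanned by the first column of $X$, so $\dim(XW')\in\{0,1\}$ according to whether that column is zero, giving $t=4$ when the first column vanishes and $t=3$ otherwise. This turns the partition of each $S(\beta)$ into a two-line affair rather than the lengthy tables required in the rank-two cases.

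Next I would invoke Theorem~\ref{character value of cuspidal representation (Dipendra)}: since $h$ is unipotent its semisimple part is $1$, forcing $z=1$, $d=1$, $m=6$, and hence $\Theta_\theta(h)=-(1-q)(1-q^2)\cdots(1-q^{t-1})$. For $X\in S(0)$ the first column is $(X_{11},X_{21},0)^{\top}$, which vanishes for $q^6$ matrices (where $t=4$) and is nonzero for $(q^2-1)q^6$ matrices (where $t=3$); summing and factoring yields $C_1=-q^8(1-q)^2(1-q^2)$. For $X\in S(\beta)$ with $\beta\neq 0$ the first column is automatically nonzero, so $t=3$ throughout, and using $\sum_{\beta\in F^{\times}}\overline{\psi_0(\beta)}=-1$ I obtain $C_2=q^8(1-q)(1-q^2)$. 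Combining the two pieces gives $\Theta_{N,\psi_A}(m)=\frac{1}{q^9}\bigl(-q^8(1-q)^2(1-q^2)+q^8(1-q)(1-q^2)\bigr)=(1-q)(1-q^2)$, as claimed.

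I do not anticipate a genuine obstacle here. Because $m_1=I_3$ kills the image term, the only care needed is the correct identification of $W'=\Span\{e_1\}$ (hence of $XW'$ as the span of the first column of $X$) and an accurate count of which first columns vanish inside $S(0)$ versus $S(\beta)$. The bookkeeping is strictly lighter than in Theorem~\ref{Type(1,1)}, where a nontrivial $\Image(m_1-1)$ forced a much longer case analysis.
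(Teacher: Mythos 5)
Your proposal is correct and follows essentially the same route as the paper's own proof: the same split of the character sum over $S(0)$ and $\bigsqcup_{\beta\in F^{\times}}S(\beta)$, the same use of Proposition~\ref{dim(ker(h-1))} with the intersection term killed by $\Image(m_1-1)=\{0\}$, and the same two-case count on whether the first column of $X$ vanishes, yielding the paper's values $H_1=-q^8(1-q)^2(1-q^2)$ and $H_2=q^8(1-q)(1-q^2)$ (your $C_1$, $C_2$). The only difference is presentational: you compress into two cases what the paper records in its tables.
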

\begin{proof}
 
 We have \[ \Theta_{N,\psi_A}(m)=\frac{1}{|N|} \sum_{ X \in M(3,F)}\Theta_{\theta} \begin{bmatrix} m_1 & X\\
 0 & m_2\end{bmatrix}\overline{\psi_A({m_1}^{-1}X}).\]
Note that $\dim\Ker(m_1-1)=3$, $\Image(m_1-1)=\{0\}$ and $\Ker(m_2-1)=\Span \{e_1\}$. To calculate the character value, we write 
 \[ \Theta_{N,\psi_A}(m)=\frac{1}{q^9} (H_1+H_2)\]
 where
 \[ H_1= \sum_{X \in S(0)}\Theta_{\theta} \begin{bmatrix} m_1 & X\\
 0 & m_2\end{bmatrix}\overline{\psi_A({m_1}^{-1}X})\]
  and \[ H_2=\sum_{\beta \in F^{\times}}\sum_{X \in S(\beta)}\Theta_{\theta} \begin{bmatrix} m_1 & X\\
 0 & m_2\end{bmatrix}\overline{\psi_A({m_1}^{-1}X}).\]
Let $t=\dim(\Ker(h-1))$. To compute $H_{1}$, we find a partition of $S(0)$ according to the value of $t$ and compute the respective cardinalities. We record the details in the table below. 
\begin{center}
\captionof{table}{$H_1$}
\label{H_1}
\scriptsize
\scriptsize
\resizebox{1\textwidth}{!}{
    \begin{tabular}{| c | c | c | c | c | c |}
    \hline
  &  & &    &  &  \\
  & Partition of $S(0)$ & $\dim(XW')$ & $\dim(XW' \cap \Image(m_1-1))$ & $t=\dim(\Ker(h-1))$ & Cardinality\\
  & & & &  & \\
   \hline 
   
 & & & & & \\
$1) (a)$ & $\left \{\begin{bmatrix} a & b & e  \\ c & d & f\\
0 & k & l \end{bmatrix} \,\middle \vert\, (a,c) \neq 0 \right \}$ & $1$ & $0$ & $3$ & $(q^2-1)q^6$\\
 & & & & & \\
\hline
& & & & & \\
$1)(b)$ & $\left \{ \begin{bmatrix} 0 & b & e\\
         0 & d & f\\
         0 & k & l         \end{bmatrix} \right \} $  & $0$ & $0$ & $4$ & $q^6$\\
 & & & & &\\
\hline
    \end{tabular} 
    }
\end{center}
 Hence, 
 \[H_1= K_1+K_2\]
 where
 \begin{itemize}
\item[a)]$K_1= \displaystyle \sum_{\substack{X\in S(0) \\ t=4 }}\Theta_{\theta} \begin{bmatrix} m_1 & X\\
 0 & m_2\end{bmatrix}\overline{\psi_A({m_1}^{-1}X})=-q^6(1-q)(1-q^2)(1-q^3)$.\\
 \item[b)]$K_2= \displaystyle \sum_{\substack{X\in S(0) \\ t=3 }}\Theta_{\theta} \begin{bmatrix} m_1 & X\\
 0 & m_2\end{bmatrix}\overline{\psi_A({m_1}^{-1}X})=q^6(1-q)(1-q^2)^2$.\\
\end{itemize}
It follows that \begin{equation} \label{H_1(Type(4,2)} H_1=\sum_{X \in S(0)}\Theta_{\theta} \begin{pmatrix} m_1 & X\\
 0 & m_2\end{pmatrix}\overline{\psi_A({m_1}^{-1}X})=-q^8(1-q)^2(1-q^2).\end{equation}
Proceeding in a similar way, we find a partition of $S(\beta)$ to compute $H_{2}$. We record the details in the following table. 
\begin{center}
\captionof{table}{$H_2$}
\label{H_2}
\scriptsize
\scriptsize
\resizebox{1\textwidth}{!}{
    \begin{tabular}{| c | c | c | c | c | c |}
    \hline
  &  & &    &  &  \\
  & Partition of $S(\beta)$ & $\dim(XW')$ & $\dim(XW' \cap \Image(m_1-1))$ & $t=\dim(\Ker(h-1))$ & Cardinality\\
  & & & &  & \\
   \hline 
   
 & & & & & \\
$1)$ & $\left \{\begin{bmatrix} a & b & e  \\ c & d & f\\
\beta & k & l \end{bmatrix} \right \}$ & $1$ & $0$ & $3$ & $q^8$\\
 & & & & & \\
\hline
    \end{tabular} 
    }
\end{center}
Thus, we have 
\begin{equation}\label{H_2(Type(4,2)} H_2=\displaystyle \sum_{\beta \in F^{\times}} \sum_{\substack{X\in S(\beta) \\ t=3 }}\Theta \begin{bmatrix} m_1 & X\\0 & m_2\end{bmatrix}\overline{\psi_A({m_1}^{-1}X})=q^8(1-q)(1-q^2).\end{equation}
From \eqref{H_1(Type(4,2)} and \eqref{H_2(Type(4,2)}, it follows that 
 \[ \Theta_{N,\psi_A}= (1-q)(1-q^2).\]
 \end{proof}
 
\begin{remark}
 Let $m \in T(2,4)$. Since \[\Theta_{N,\psi_A}(m)=\Theta_{N,\psi_A}(m')\] for some $m' \in T(4,2)$, it is enough to compute $\Theta_{N,\psi_A}(m')$ for $m' \in T(4,2)$ to obtain $\Theta_{N,\psi_A}(m)$.
\end{remark}
 
\begin{theorem} \label{Type(4,4)}
Let $m \in T(4,4)$. Then, we have \[\Theta_{N,\psi_A}(m)=(1-q)^2(1-q^2)^2.\]
\end{theorem}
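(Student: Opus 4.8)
The plan is to recognize that the computation for $T(4,4)$ is nothing more than the dimension calculation already carried out in Theorem~\ref{dimension calculation}. Reading Type-4 off both Table~\ref{Character of rho1} and Table~\ref{Character of rho2}, the condition $m_1\in\text{Type-4}$ forces $m_1=I$ and $m_2\in\text{Type-4}$ forces $m_2=I$, so $T(4,4)$ is the single element $m=\begin{bmatrix} I & 0\\ 0 & I\end{bmatrix}$, which is the identity of $H_A$ (and hence of $M_{\psi_A}$).

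First I would substitute $m_1=m_2=I$ into the defining character formula for $\Theta_{N,\psi_A}$. Since $m_1^{-1}=I$, the twist $\psi_A(m_1^{-1}X)$ collapses to $\psi_A(X)=\psi_0(\tr(AX))$, giving
\[
\Theta_{N,\psi_A}(m)=\frac{1}{q^9}\sum_{X\in\M(3,F)}\Theta_\theta\!\left(\begin{bmatrix} 1 & X\\ 0 & 1\end{bmatrix}\right)\overline{\psi_0(\tr(AX))}.
\]
This is precisely the right-hand side of the dimension formula; equivalently, the value of the character at the group identity equals $\dim_{\mathbb{C}}(\pi_{N,\psi_A})$, as the trace of the identity operator.

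Then I would invoke Theorem~\ref{dimension calculation} verbatim to obtain $\Theta_{N,\psi_A}(m)=(q-1)^2(q^2-1)^2$, and rewrite this as $(1-q)^2(1-q^2)^2$ using that each factor appears squared. No partitioning of $S(0)$ or $S(\beta)$ by rank is required here, in sharp contrast to the genuinely laborious cases $T(1,1),\dots,T(4,2)$ handled in Theorems~\ref{Type(1,1)}--\ref{Type(4,2)}. The only point meriting explicit mention — and essentially the sole obstacle — is the bookkeeping check that Type-4 in each of the two tables is the identity matrix rather than some nontrivial unipotent element; once that identification is in place, the result follows immediately from the earlier dimension theorem.
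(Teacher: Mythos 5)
Your proposal is correct and is essentially identical to the paper's own proof: the paper likewise observes that $T(4,4)$ consists only of the identity element $m=1$, so that $\Theta_{N,\psi_A}(m)=\dim_{\mathbb{C}}(\pi_{N,\psi_A})$, and then quotes Theorem~\ref{dimension calculation} to conclude $(1-q)^2(1-q^2)^2$. Your additional remarks (the collapse of the twist $\psi_A(m_1^{-1}X)$ to $\psi_A(X)$ and the sign bookkeeping $(q-1)^2(q^2-1)^2=(1-q)^2(1-q^2)^2$) are just explicit spellings-out of steps the paper leaves implicit.
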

\begin{proof} Since $m\in T(4,4)$, we have $m=1$, and the result follows from Theorem~\ref{dimension calculation}. To be precise, we have
\[\Theta_{N,\psi_{A}}(m)= \dim_{\mathbb{C}}(\pi_{N,\psi_{A}})= (1-q)^2(1-q^2)^2.\] 
\end{proof}

\begin{theorem} \label{Type(x,5)}
Let $1 \leq i \leq 10$. Suppose that $m=\begin{bmatrix} m_1 & 0 \\
0 & m_2 \end{bmatrix} \in T(i,5)$ and $m'=\begin{bmatrix} m_1 & 0\\
0 & m_2' \end{bmatrix} \in T(i,1)$. Then, we have 
\[\Theta_{N,\psi_A}(m)=\Theta_{N,\psi_A}(m').\]
\end{theorem}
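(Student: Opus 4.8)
The plan is to work directly from the character formula
\[\Theta_{N,\psi_A}(m)=\frac{1}{|N|}\sum_{X\in\M(3,F)}\Theta_\theta\begin{bmatrix} m_1 & X\\ 0 & m_2\end{bmatrix}\overline{\psi_A(m_1^{-1}X)}\]
and to show that the two sums defining $\Theta_{N,\psi_A}(m)$ and $\Theta_{N,\psi_A}(m')$ agree term by term in $X$. Since $m$ and $m'$ share the upper-left block $m_1$, the twisting factor $\overline{\psi_A(m_1^{-1}X)}$ is \emph{identical} in the two sums, so the entire problem reduces to proving $\Theta_\theta\begin{bmatrix} m_1 & X\\ 0 & m_2\end{bmatrix}=\Theta_\theta\begin{bmatrix} m_1 & X\\ 0 & m_2'\end{bmatrix}$ for every fixed $X$.

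First I would write out the two lower-right blocks explicitly. An element $m_2$ of Type-$5$ in $M_2$ has the form $\begin{bmatrix} 1 & 0 & y_{13}\\ 0 & 1 & 0\\ 0 & 0 & 1\end{bmatrix}$ with $y_{13}\in F^\times$, while an element $m_2'$ of Type-$1$ has the form $\begin{bmatrix} 1 & 0 & y_{13}'\\ 0 & 1 & y_{23}'\\ 0 & 0 & 1\end{bmatrix}$ with $y_{23}'\in F^\times$. The decisive observation is that $m_2-1$ and $m_2'-1$ are both of rank one and, crucially, have the \emph{same} kernel: $\Ker(m_2-1)=\Ker(m_2'-1)=\Span\{e_1,e_2\}$, of dimension $2$ in both cases. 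It is the equality of these subspaces, and not merely of their dimensions, that drives the argument.

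Next I would apply Proposition \ref{dim(ker(h-1))} to both $h=\begin{bmatrix} m_1 & X\\ 0 & m_2\end{bmatrix}$ and $h'=\begin{bmatrix} m_1 & X\\ 0 & m_2'\end{bmatrix}$. Its hypotheses hold because $m_2,m_2'$ are unipotent and every $m_1$ of Type-$i$ is unipotent (its entries are forced to make it conjugate into $U_1$). Writing $W'=\Ker(m_2-1)=\Ker(m_2'-1)$, the proposition gives
\[\dim\Ker(h-1)=\dim\Ker(m_1-1)+\dim\Ker(m_2-1)-\dim(XW')+\dim\bigl(XW'\cap\Image(m_1-1)\bigr),\]
and the analogous formula for $h'$. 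Since $m_1$ is common, $\dim\Ker(m_1-1)$ and $\Image(m_1-1)$ are the same for both; since $W'$ is literally the same subspace, $XW'=\Span\{X_1,X_2\}$ by Remark \ref{X}, and hence $\dim(XW')$ and $\dim\bigl(XW'\cap\Image(m_1-1)\bigr)$ agree as well. Therefore $\dim\Ker(h-1)=\dim\Ker(h'-1)$ for every $X$.

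Finally, because $m_1,m_2,m_2'$ are unipotent, $h$ and $h'$ are unipotent for every $X$, so their semisimple part is $1$ and Theorem \ref{character value of cuspidal representation (Dipendra)} (with $z=1$, $d=1$) makes $\Theta_\theta$ depend only on $t=\dim\Ker(h-1)$. The equality of kernel dimensions just established then gives $\Theta_\theta(h)=\Theta_\theta(h')$ for all $X$, and summing against the common factor $\overline{\psi_A(m_1^{-1}X)}$ yields $\Theta_{N,\psi_A}(m)=\Theta_{N,\psi_A}(m')$. There is no serious obstacle; the single point deserving care is the verification that $\Ker(m_2-1)$ and $\Ker(m_2'-1)$ coincide \emph{as subspaces} (both equal $\Span\{e_1,e_2\}$), since it is precisely this equality that forces $\dim(XW')$ and $\dim\bigl(XW'\cap\Image(m_1-1)\bigr)$ to match and thereby routes the entire $m_2$-dependence of the summand through the common kernel $W'$.
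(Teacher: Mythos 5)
Your proposal is correct and follows essentially the same route as the paper: both proofs hinge on the observation that $\Ker(m_2-1)$ and $\Ker(m_2'-1)$ coincide as subspaces (not just in dimension), feed this through Proposition~\ref{dim(ker(h-1))} to get $\dim\Ker(h-1)=\dim\Ker(h'-1)$ for every $X$, and then use Theorem~\ref{character value of cuspidal representation (Dipendra)} at unipotent elements, where the character depends only on that kernel dimension. The only difference is organizational — the paper groups the sum over $X$ into the sets $M_{m_1,m_2}^{d,\beta}$ indexed by kernel dimension and trace value, whereas you compare the two sums term by term — which is the same argument.
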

 
\begin{proof}
Let $h=\begin{bmatrix} m_1 & X \\
0 & m_2 \end{bmatrix}$ and $h'=\begin{bmatrix} m_1 & X \\
0 &m_2' \end{bmatrix}$ for $X \in \M(3,F)$. Let \[M_{m_{1},m_{2}}^{ d,\beta}=\{X \in S(\beta) \mid \dim(\Ker(h-1))=d\}\] for $\beta \in F$.
Clearly,
\[\Ker(m_2-1)=\Ker(m_2'-1).\]
Hence for any $X \in \M(3,F)$, 
\[X\Ker(m_2-1)=X\Ker(m_2'-1)\]
and \[X\Ker(m_2-1) \cap \Image(m_1-1)=X\Ker(m_2'-1) \cap \Image(m_1-1).\]
In particular, for any $\beta \in F$, we have that \begin{equation}\label{M_d^h}M_{m_{1},m_{2}}^{ d,\beta}=M_{m_{1},m_{2}'}^{ d,\beta}.
\end{equation}
We have, 
\[\Theta_{N,\psi_A}(m)=\frac{1}{q^9}(R_1+R_2)\]
where
\begin{align*} R_1&=\sum_{d=1}^{6}\sum_{ X \in M_{m_{1},m_{2}}^{ d,0}}\Theta_{\theta}(h)\overline{\psi_A({m_1}^{-1}X)}\\
&= \sum_{d=1}^{6}(-1)^{6-1}(1-q) \cdots (1-q^{d-1})(\# M_{m_{1},m_{2}}^{ d,0})\overline{\psi_0(0)} \end{align*}
and \begin{align*} R_2 &=\sum_{\beta \in F^{\times}}\sum_{d=1}^{6}\sum_{X \in M_{m_{1},m_{2}}^{ d,\beta}}\Theta_{\theta}(h)\overline{\psi_A({m_1}^{-1}X)}\\
 &=\sum_{\beta \in F^{\times}} \sum_{d=1}^{6}(-1)^{6-1}(1-q) \cdots (1-q^{d-1})(\# M_{m_{1},m_{2}}^{ d,\beta})\overline{\psi_0(\beta)} \end{align*}
Thus,  \[\Theta_{N,\psi_A}(m)=\frac{1}{q^9}\sum_{d=1}^{6}(-1)^{6-1}(1-q) \cdots (1-q^{d-1})(\# M_{m_{1},m_{2}}^{ d,0}+ \sum_{\beta \in F^{\times}}\# M_{m_{1},m_{2}}^{ d,\beta}\overline{\psi_0(\beta})).\]
Similarly, \[\Theta_{N,\psi_A}(m')=\frac{1}{q^9}\sum_{d=1}^{6}(-1)^{6-1}(1-q) \cdots (1-q^{d-1})(\# M_{m_{1},m_{2}'}^{ d,0}+ \sum_{\beta \in F^{\times}}\# M_{m_{1},m_{2}'}^{ d,\beta}\overline{\psi_0(\beta})).\]
Hence, it follows from equation ~\eqref{M_d^h} that
\[\Theta_{N,\psi_A}(m)=\Theta_{N,\psi_A}(m').\]
 \end{proof}

\begin{proposition}\label{Type(x,3)}
Let $1 \leq i \leq 10$ and $m\in T(i,3)$. Then, there exists some $m'\in T(i,5)$ such that \[\Theta_{N,\psi_A}(m)=\Theta_{N,\psi_A}(m').\] 
\end{proposition}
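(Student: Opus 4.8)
The plan is to reduce the computation for $m \in T(i,3)$ to the one for a suitable $m' \in T(i,5)$ by producing a single group element that conjugates one of the two character sums into the other. Recall from Proposition~\ref{dim(ker(h-1))} that, for $h = \begin{bmatrix} m_1 & X \\ 0 & m_2 \end{bmatrix}$, the quantity $\dim\Ker(h-1)$ depends on $m_2$ only through $W' = \Ker(m_2-1)$, and that $\Theta_\theta$ is a class function on $G$. In Theorem~\ref{Type(x,5)} the passage from Type-$5$ to Type-$1$ was easy precisely because those two types share the same kernel $\Span\{e_1,e_2\}$. The new obstacle here is that a Type-$3$ element of $M_2$ has kernel $\Span\{e_1,e_3\}$ while a Type-$5$ element has kernel $\Span\{e_1,e_2\}$, so the relevant partition sets do not coincide on the nose; instead I would exhibit an explicit bijection of $\M(3,F)$ intertwining them.

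The candidate is the transposition $\sigma = \begin{bmatrix} 1 & 0 & 0 \\ 0 & 0 & 1 \\ 0 & 1 & 0 \end{bmatrix}$ interchanging the last two basis vectors; note $\sigma^{-1}=\sigma$. A direct computation gives $\sigma \begin{bmatrix} 1 & y & 0 \\ 0 & 1 & 0 \\ 0 & 0 & 1 \end{bmatrix} \sigma^{-1} = \begin{bmatrix} 1 & 0 & y \\ 0 & 1 & 0 \\ 0 & 0 & 1 \end{bmatrix}$, so conjugation by $\sigma$ carries a Type-$3$ element of $M_2$ to a Type-$5$ element. Thus, given $m = \begin{bmatrix} m_1 & 0 \\ 0 & m_2 \end{bmatrix} \in T(i,3)$, I would set $m_2' = \sigma m_2 \sigma^{-1}$ and $m' = \begin{bmatrix} m_1 & 0 \\ 0 & m_2' \end{bmatrix}$; since the top-left block is untouched, $m_1$ stays in Type-$i$ and $m' \in T(i,5)$. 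The crucial point is the coincidence $\sigma A = A$, which holds because the only nonzero entry of $A = \begin{bmatrix} 0 & 0 & 1 \\ 0 & 0 & 0 \\ 0 & 0 & 0 \end{bmatrix}$ lies in its first row, and $\sigma$ fixes the first row while merely swapping the two zero rows.

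With these choices I would run a change of variables in the character formula. Writing $g = \begin{bmatrix} I & 0 \\ 0 & \sigma \end{bmatrix} \in \GL(6,F)$, one checks $g \begin{bmatrix} m_1 & X \\ 0 & m_2 \end{bmatrix} g^{-1} = \begin{bmatrix} m_1 & X\sigma^{-1} \\ 0 & m_2' \end{bmatrix}$. In the sum
\[\Theta_{N,\psi_A}(m') = \frac{1}{q^9}\sum_{Y \in \M(3,F)} \Theta_{\theta}\begin{bmatrix} m_1 & Y \\ 0 & m_2' \end{bmatrix}\overline{\psi_0(\tr(Am_1^{-1}Y))}\]
I would substitute $Y = X\sigma^{-1}$, which is a bijection of $\M(3,F)$. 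The $\Theta_\theta$-factor then becomes $\Theta_\theta\begin{bmatrix} m_1 & X \\ 0 & m_2 \end{bmatrix}$ by the class-function property, while $\tr(Am_1^{-1}X\sigma^{-1}) = \tr(\sigma^{-1}Am_1^{-1}X) = \tr(Am_1^{-1}X)$ by cyclicity of the trace together with $\sigma^{-1}A = A$. The sum therefore collapses to exactly $\Theta_{N,\psi_A}(m)$, yielding the claim. The only genuine difficulty is locating the element $\sigma$ that simultaneously conjugates Type-$3$ into Type-$5$ and fixes $A$; once $\sigma A = A$ is verified, the remainder is a formal substitution requiring no case analysis and, in contrast with the earlier theorems, no tables.
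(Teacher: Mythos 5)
Your proposal is correct and follows essentially the same route as the paper: the paper's proof conjugates $m$ by $g=\begin{bmatrix} 1 & 0 \\ 0 & w \end{bmatrix}$, where $w$ is precisely your transposition $\sigma$ swapping the last two basis vectors, and concludes at once since $g\in M_{\psi_A}$ and $\Theta_{N,\psi_A}$ is a class function on $M_{\psi_A}$. Your explicit change of variables $Y=X\sigma^{-1}$, together with the identities $\sigma A=A$ and the class-function property of $\Theta_{\theta}$, simply verifies that invariance by hand.
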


\begin{proof} Let $m=\begin{bmatrix} m_{1} & 0 \\ 0 & m_{2}\end{bmatrix}\in T(i,3)$, and  $w= \begin{bmatrix} 1 & 0 & 0\\
 0 & 0 & 1\\
 0 & 1 & 0 \end{bmatrix}$. Let $m_{2}'= wm_{2}w^{-1}$ and $m'=\begin{bmatrix} m_1 & 0\\
0 & m_2' \end{bmatrix}$. Clearly, we have $m= \begin{bmatrix}1 & 0\\
0 & w \end{bmatrix} m' {\begin{bmatrix} 1 & 0\\
0 & w \end{bmatrix}}^{-1}$ and $m^{'}\in T(i,5)$. Hence the result. 
\end{proof}
   
\begin{corollary}
Let $1 \leq i \leq 10$ and $m\in T(i,3)$.  Then, \[\Theta_{N,\psi_A}(m)=\Theta_{N,\psi_A}(m')\] for some $m' \in T(i,1)$.
\end{corollary}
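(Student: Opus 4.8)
The statement is a direct composition of the two preceding transfer results, so the plan is simply to chain them. Starting from $m \in T(i,3)$, I would first invoke Proposition~\ref{Type(x,3)} to obtain an element $m'' \in T(i,5)$ with $\Theta_{N,\psi_A}(m) = \Theta_{N,\psi_A}(m'')$. Recall that $m''$ is built by conjugating the lower $M_2$-block of $m$ by the permutation matrix $w$ exchanging the last two standard basis vectors; this sends a Type-3 element of $M_2$ to a Type-5 element and fixes the upper $M_1$-block, so the Type index $i$ in the first coordinate is unchanged and $m'' \in T(i,5)$, as required.

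I would then apply Theorem~\ref{Type(x,5)} to $m''$. That theorem supplies, for the given $m'' = \begin{bmatrix} m_1 & 0 \\ 0 & m_2'' \end{bmatrix} \in T(i,5)$, a partner $m' = \begin{bmatrix} m_1 & 0 \\ 0 & m_2' \end{bmatrix} \in T(i,1)$ with the same $M_1$-block and with $\Theta_{N,\psi_A}(m'') = \Theta_{N,\psi_A}(m')$. Combining the two equalities yields $\Theta_{N,\psi_A}(m) = \Theta_{N,\psi_A}(m')$ for this $m' \in T(i,1)$, which is precisely the assertion.

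There is no genuine obstacle here, since all the analytic work lives in the two cited results. The only point worth verifying when writing it out is that the two results compose cleanly: that the intermediate element genuinely lands in $T(i,5)$, which is exactly the output of Proposition~\ref{Type(x,3)}, and that the Type index $i$ is carried along unchanged at each step because both maneuvers act only on the $M_2$-block. For completeness one could also remark that the mechanism underlying Theorem~\ref{Type(x,5)} is the coincidence $\Ker(m_2'' - 1) = \Ker(m_2' - 1) = \Span\{e_1, e_2\}$ for the relevant Type-5 and Type-1 representatives of $M_2$, which is what forces the character contributions computed via Proposition~\ref{dim(ker(h-1))} to agree term by term.
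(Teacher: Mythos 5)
Your proposal is correct and is exactly the paper's own argument: the paper proves this corollary by citing Proposition~\ref{Type(x,3)} to pass from $T(i,3)$ to $T(i,5)$ and then Theorem~\ref{Type(x,5)} to pass from $T(i,5)$ to $T(i,1)$, precisely the chain you describe. Your added remarks (that the intermediate element lands in $T(i,5)$ with the index $i$ untouched, and that the underlying mechanism is $\Ker(m_2''-1)=\Ker(m_2'-1)$) are accurate elaborations of details the paper leaves implicit.
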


\begin{proof}
Using Proposition \ref{Type(x,3)} and Theorem \ref{Type(x,5)}, the result follows.
\end{proof}

\begin{proposition} \label{(x,7)}
Let $1\leq i\leq 10$ and $m\in T(i,7)$. Then, there exists some $m'\in T(i,1)$ such that 
\[\Theta_{N,\psi_A}(m)=\Theta_{N,\psi_A}(m'). \]
\end{proposition}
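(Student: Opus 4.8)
The plan is to follow verbatim the conjugation argument used in Proposition~\ref{Type(x,3)}. The underlying principle is that $\Theta_{N,\psi_A}$ is the character of the representation $\pi_{N,\psi_A}$ of the group $M_{\psi_A}$, and is therefore a class function on $M_{\psi_A}$; so if I can exhibit an element $g\in M_{\psi_A}$ for which $gmg^{-1}$ lies in $T(i,1)$, the desired equality of character values is immediate. The task thus reduces to producing such a $g$ that conjugates the Type-7 block of $m$ into a Type-1 block while leaving the $M_1$-factor $m_1$ (and hence the index $i$) untouched.

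Concretely, I would reuse the permutation matrix $w=\begin{bmatrix} 1 & 0 & 0\\ 0 & 0 & 1\\ 0 & 1 & 0\end{bmatrix}$ from Proposition~\ref{Type(x,3)} and set $g=\begin{bmatrix} 1 & 0\\ 0 & w\end{bmatrix}$. By the criterion $Ag_1=g_2A$ in the proof of Lemma~\ref{elements in M psi A}, we have $g\in M_{\psi_A}$ precisely because $w$ fixes $e_1$ (equivalently $A=wA$), and the first block being the identity guarantees that conjugation by $g$ does not change $m_1$. Writing $m=\begin{bmatrix} m_1 & 0\\ 0 & m_2\end{bmatrix}\in T(i,7)$ with $m_2=\begin{bmatrix} 1 & y_{12} & 0\\ 0 & 1 & 0\\ 0 & y_{32} & 1\end{bmatrix}$ and $y_{32}\in F^{\times}$, I set $m_2'=wm_2w^{-1}$ and $m'=\begin{bmatrix} m_1 & 0\\ 0 & m_2'\end{bmatrix}$. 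A one-line computation gives $m_2'=\begin{bmatrix} 1 & 0 & y_{12}\\ 0 & 1 & y_{32}\\ 0 & 0 & 1\end{bmatrix}$, which is of Type-1 since $y_{32}\neq 0$; hence $m'\in T(i,1)$. Because $w^2=1$ we have $m=gm'g^{-1}$.

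Invoking that $\Theta_{N,\psi_A}$ is a class function on $M_{\psi_A}$ then yields $\Theta_{N,\psi_A}(m)=\Theta_{N,\psi_A}(m')$, completing the argument. I do not expect a genuine obstacle here: the entire content is the observation that a Type-7 matrix is conjugate, by a permutation fixing $e_1$, to a Type-1 matrix, exactly parallel to the Type-3 to Type-5 conjugation in Proposition~\ref{Type(x,3)}. The only point that needs a moment of care is checking that the resulting block really satisfies the nonvanishing condition defining Type-1 (namely $y_{23}=y_{32}\neq 0$) rather than accidentally collapsing into Type-5, and that the conjugating element genuinely lies in $M_{\psi_A}$; both are settled by the explicit matrices above.
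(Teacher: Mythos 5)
Your proof is correct and is essentially the same as the paper's: both conjugate $m$ by $g=\begin{bmatrix} 1 & 0\\ 0 & w\end{bmatrix}$, where $w$ is the permutation matrix interchanging the second and third basis vectors, so that $m_2'=wm_2w^{-1}$ is of Type-1 and the conjugation-invariance of the character $\Theta_{N,\psi_A}$ on $M_{\psi_A}$ gives the equality. Your explicit verification that $g\in M_{\psi_A}$ (via $A=wA$, i.e.\ $w$ fixing $e_1$) and that $m_2'$ genuinely lands in Type-1 rather than Type-5 makes precise what the paper leaves implicit.
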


\begin{proof} Let $m=\begin{bmatrix} m_{1} & 0 \\ 0 & m_{2}\end{bmatrix}\in T(i,7)$, and  $w= \begin{bmatrix} 1 & 0 & 0\\
 0 & 0 & 1\\
 0 & 1 & 0 \end{bmatrix}$. Let $m_{2}'= wm_{2}w^{-1}$ and $m'=\begin{bmatrix} m_1 & 0\\
0 & m_2' \end{bmatrix}$. Clearly, we have $m= \begin{bmatrix}1 & 0\\
0 & w \end{bmatrix} m' {\begin{bmatrix} 1 & 0\\
0 & w \end{bmatrix}}^{-1}$ and $m^{'}\in T(i,1)$. Hence the result. 
\end{proof}

\begin{proposition} \label{(x,8)}
Let $1\leq i\leq 10$ and $m\in T(i,8)$. Then, there exists some $m'\in T(i,2)$ such that 
\[\Theta_{N,\psi_A}(m)=\Theta_{N,\psi_A}(m'). \]
\end{proposition}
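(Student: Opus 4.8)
The plan is to follow verbatim the template of Propositions \ref{Type(x,3)} and \ref{(x,7)}: conjugate the lower diagonal block $m_2$ by the transposition matrix $w=\begin{bmatrix} 1 & 0 & 0\\ 0 & 0 & 1\\ 0 & 1 & 0 \end{bmatrix}$ swapping the last two coordinates, and then check two things — that this operation carries a Type-8 block to a Type-2 block, and that the ambient conjugation leaves $\Theta_{N,\psi_A}$ unchanged.

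First I would write $m=\begin{bmatrix} m_1 & 0\\ 0 & m_2\end{bmatrix}\in T(i,8)$, set $m_2'=wm_2w^{-1}$ and $m'=\begin{bmatrix} m_1 & 0\\ 0 & m_2'\end{bmatrix}$. Since $w^2=1$, we have $m=\begin{bmatrix} 1 & 0\\ 0 & w\end{bmatrix}m'\begin{bmatrix} 1 & 0\\ 0 & w\end{bmatrix}^{-1}$, exactly as in the earlier proofs. A one-line matrix computation with the Type-8 representative $m_2=\begin{bmatrix} 1 & y_{12} & y_{13}\\ 0 & 1 & 0\\ 0 & y_{32} & 1\end{bmatrix}$, $y_{13},y_{32}\in F^{\times}$, gives $wm_2w^{-1}=\begin{bmatrix} 1 & y_{13} & y_{12}\\ 0 & 1 & y_{32}\\ 0 & 0 & 1\end{bmatrix}$, whose $(1,2)$ and $(2,3)$ entries $y_{13},y_{32}$ both lie in $F^{\times}$; this is precisely the shape of a Type-2 element, so $m'\in T(i,2)$.

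It remains to justify the equality of characters, which is the content silently invoked in Propositions \ref{Type(x,3)} and \ref{(x,7)}. Writing $g=\begin{bmatrix} 1 & 0\\ 0 & w\end{bmatrix}$ and using the formula $\Theta_{N,\psi_A}(m)=\frac{1}{|N|}\sum_{n\in N}\Theta_\theta(mn)\overline{\psi_A(n)}$ together with the class-function property of $\Theta_\theta$, I would rewrite $\Theta_\theta(mn)=\Theta_\theta(gm'g^{-1}n)=\Theta_\theta\!\left(m'\,g^{-1}ng\right)$. A direct block computation shows $g$ normalizes $N$ via $\begin{bmatrix} 1 & X\\ 0 & 1\end{bmatrix}\mapsto \begin{bmatrix} 1 & Xw\\ 0 & 1\end{bmatrix}$, and since $X\mapsto Xw$ is a bijection of $\M(3,F)$, reindexing the sum by $n\mapsto g^{-1}ng$ turns it into $\frac{1}{|N|}\sum_{n'\in N}\Theta_\theta(m'n')\overline{\psi_A(n')}=\Theta_{N,\psi_A}(m')$, provided $\psi_A$ is unaffected by this substitution.

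The one genuine point to verify — conceptually the crux, though computationally immediate — is the invariance of $\psi_A$ under $X\mapsto Xw$. With $A=\begin{bmatrix} 0 & 0 & 1\\ 0 & 0 & 0\\ 0 & 0 & 0\end{bmatrix}$ one has $\tr(AX)=X_{31}$, and because $w$ fixes the first column, $\tr(AXw)=(Xw)_{31}=X_{31}=\tr(AX)$. Hence $\psi_A(n)=\psi_A(g^{-1}ng)$ for every $n\in N$, the reindexed sum is genuinely $\Theta_{N,\psi_A}(m')$, and the proposition follows. I do not anticipate any real obstacle here: the whole argument is the Type-8 $\to$ Type-2 analogue of the already-established Type-7 $\to$ Type-1 and Type-3 $\to$ Type-5 cases, the only case-specific input being the matrix identity that identifies $wm_2w^{-1}$ as a Type-2 element.
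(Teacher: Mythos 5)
Your proposal is correct and follows essentially the same route as the paper: the paper's proof also sets $m_2'=wm_2w^{-1}$ with $w$ the permutation swapping the last two coordinates, observes $m=\begin{bmatrix} 1 & 0\\ 0 & w\end{bmatrix}m'\begin{bmatrix} 1 & 0\\ 0 & w\end{bmatrix}^{-1}$ with $m'\in T(i,2)$, and concludes. The only difference is one of exposition: the paper leaves implicit the invariance of $\Theta_{N,\psi_A}$ under this conjugation, which you verify explicitly (that $\begin{bmatrix} 1 & 0\\ 0 & w\end{bmatrix}$ normalizes $N$ via $X\mapsto Xw$ and fixes $\psi_A$ since $\tr(AXw)=X_{31}=\tr(AX)$), a correct and welcome completion of the same argument.
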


\begin{proof} Let $m=\begin{bmatrix} m_{1} & 0 \\ 0 & m_{2}\end{bmatrix}\in T(i,8)$, and  $w= \begin{bmatrix} 1 & 0 & 0\\
 0 & 0 & 1\\
 0 & 1 & 0 \end{bmatrix}$. Let $m_{2}'= wm_{2}w^{-1}$ and $m'=\begin{bmatrix} m_1 & 0\\
0 & m_2' \end{bmatrix}$. Clearly, we have $m= \begin{bmatrix}1 & 0\\
0 & w \end{bmatrix} m' {\begin{bmatrix} 1 & 0\\
0 & w \end{bmatrix}}^{-1}$ and $m^{'}\in T(i,2)$. Hence the result. 
\end{proof}

\begin{theorem} \label{(1,6)}
Let $m=\begin{bmatrix} m_1 & 0\\
0 & m_2 \end{bmatrix}\in $ $T(1,6)$ or $T(1,9)$. Then, we have 
\[\Theta_{N,\psi_A}(m)=(1-q)^2.\] 
\end{theorem}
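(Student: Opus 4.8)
The plan is to reduce both $T(1,6)$ and $T(1,9)$ to the case $T(1,1)$ already settled in Theorem~\ref{Type(1,1)}, exploiting that $\Theta_{N,\psi_A}$ is the character of a genuine representation of $M_{\psi_A}$ and hence a class function on $M_{\psi_A}$. In both types the first block $m_1$ is of Type-1, so I keep $m_1$ fixed and only move the second block: conjugation by an element $\begin{bmatrix} I & 0 \\ 0 & g_2 \end{bmatrix}$ with $g_2 \in M_2$ sends $(m_1,m_2)$ to $(m_1, g_2 m_2 g_2^{-1})$, and every such element lies in $M_{\psi_A}$ by Lemma~\ref{elements in M psi A} (it has $m_1=I$, so its $(3,3)$-entry is $1$, which matches the $(1,1)$-entry of $g_2\in M_2$). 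Thus it suffices to conjugate $m_2$ into Type-1 or Type-5 by some $g_2\in M_2$, since Theorem~\ref{Type(x,5)} already equates the values on $T(1,5)$ and $T(1,1)$.

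First I would identify $W'=\Ker(m_2-1)$. Since any $m_2\in M_2$ fixes $e_1$, one has $e_1\in W'$ automatically. For $m_2$ of Type-6 a one-line computation gives $W'=\Span\{e_1,(0,y_{13},-y_{12})\}$, of dimension $2$. For $m_2$ of Type-9 this is the only delicate point: columns two and three of $m_2-1$ (the only possibly-nonzero ones) need not be proportional in general, but feeding in the defining relations of Type-9 --- namely $y_{22}+y_{33}=2$, $y_{23}=-y_{32}^{-1}(y_{22}-1)^2$ and $y_{12}=-\gamma$ with $\gamma=y_{32}y_{13}(y_{22}-1)^{-1}$ --- shows that column three equals $-(y_{22}-1)y_{32}^{-1}$ times column two, so $m_2-1$ has rank one and $W'=\Span\{e_1,(0,y_{22}-1,y_{32})\}$ is again $2$-dimensional. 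In both cases $W'=\Span\{e_1,w\}$ with $w=(0,w_2,w_3)$ and $w_2,w_3\neq 0$.

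Next I would write down the conjugator. Taking $g_2=\begin{bmatrix} 1 & 0 & 0 \\ 0 & 1 & 0 \\ 0 & w_3 & -w_2 \end{bmatrix}\in M_2$, which is invertible because $w_2\neq0$, one computes $g_2w=w_2e_2$, whence $g_2W'=\Span\{e_1,e_2\}$. Therefore $m_2':=g_2 m_2 g_2^{-1}$ is unipotent, lies in $M_2$, and satisfies $\Ker(m_2'-1)=g_2W'=\Span\{e_1,e_2\}$; these constraints force $m_2'=\begin{bmatrix} 1 & 0 & a \\ 0 & 1 & b \\ 0 & 0 & 1 \end{bmatrix}$ with $(a,b)\neq(0,0)$, i.e.\ $m_2'$ is of Type-1 (when $b\neq0$) or of Type-5 (when $b=0$). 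Hence $m$ is $M_{\psi_A}$-conjugate to an element of $T(1,1)\cup T(1,5)$, and combining class-function invariance with Theorems~\ref{Type(1,1)} and~\ref{Type(x,5)} yields $\Theta_{N,\psi_A}(m)=(1-q)^2$.

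The step requiring the most care --- and the only real obstacle --- is the rank-one verification for Type-9: one must use all three defining relations to see that $\dim\Ker(m_2-1)=2$, since dropping $y_{12}=-\gamma$ would make $m_2-1$ have rank two, $W'$ one-dimensional, and the reduction would break down. If one prefers to avoid class functions, the same conclusion follows from the substitution $X\mapsto XP$, where $P$ is the change of basis fixing $e_1$ and sending $e_2$ to $w$: this is a bijection of $\M(3,F)$ preserving the $(3,1)$-entry of $X$ (hence the factor $\overline{\psi_0(\tr(AX))}$, using $Am_1^{-1}=A$ for $m_1$ of Type-1), and through Proposition~\ref{dim(ker(h-1))} it carries the value of $\dim\Ker(h-1)$ over to the $T(1,1)$ computation, so the sum defining $\Theta_{N,\psi_A}(m)$ matches the one evaluated in Theorem~\ref{Type(1,1)}. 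As a last resort one could instead reproduce the explicit partition-and-count tables used there.
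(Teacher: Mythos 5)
Your proof is correct, but it takes a genuinely different route from the paper's. The paper disposes of $T(1,6)$ and $T(1,9)$ by noting that the invariants feeding Proposition~\ref{dim(ker(h-1))} and Remark~\ref{X} --- namely $\dim\Ker(m_1-1)=2$, $\Image(m_1-1)$ a line, $\dim\Ker(m_2-1)=2$, so that $XW'$ is spanned by the first two columns of $X$ in a basis adapted to $W'$ --- agree with those of the $T(1,1)$ case, and then asserts that the partition-and-count computation of Theorem~\ref{Type(1,1)} goes through in the same fashion. You instead exhibit an explicit conjugation inside $M_{\psi_A}$: after identifying $W'=\Ker(m_2-1)$ (where the rank-one verification for Type-9 via the relations $y_{22}+y_{33}=2$, $y_{23}=-y_{32}^{-1}(y_{22}-1)^2$, $y_{12}=-\gamma$ is indeed the delicate point, and you carry it out correctly), you conjugate $m_2$ by your $g_2\in M_2$ into Type-1 or Type-5, observe that the block matrix with blocks $I$ and $g_2$ lies in $M_{\psi_A}$ by Lemma~\ref{elements in M psi A}, and conclude by class-function invariance together with Theorems~\ref{Type(1,1)} and~\ref{Type(x,5)}. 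This is the same mechanism the paper itself uses in Propositions~\ref{Type(x,3)}, \ref{(x,7)} and \ref{(x,8)}, except that there the conjugators are permutation matrices, and a permutation fixing $e_1$ cannot move Type-6 or Type-9 into the fundamental set (it sends Type-6 to Type-6); your non-permutation conjugator is exactly what removes that obstruction. As for what each approach buys: the paper's argument stays uniform with its other character computations but is terse, leaving the reader to re-run the tables and to check that the weight $\overline{\psi_A(m_1^{-1}X)}$ is compatible with the adapted basis, whereas your reduction avoids any recomputation; indeed your closing observation --- that $X\mapsto Xg_2$ preserves $\tr(AX)$ because $g_2e_1=e_1$ and $Am_1^{-1}=A$ for $m_1$ of Type-1 --- is precisely the rigorous content hidden behind the paper's phrase ``proceeding in a similar fashion.''
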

 
\begin{proof} Note that $\dim\Ker(m_1-1)=2$, $\Image(m_1-1)=\Span\{e_1\}$, $\dim \Ker(m_2-1)=2$. From Remark \ref{X}, it follows that whenever \[h=\begin{bmatrix} m_1 & X\\
0 & m_2 \end{bmatrix},\] we have
\[XW'=\Span\{X_1,X_2\}.\]
Thus, proceeding in a similar fashion as in Theorem \ref{Type(1,1)}, we get that 
\[\Theta_{N,\psi_A}(m)=(1-q)^2.\]
\end{proof}

\begin{theorem} \label{(2,6)}
Let $m=\begin{bmatrix} m_1 & 0\\
0 & m_2 \end{bmatrix}\in $ $T(2,6)$ or $T(2,9)$. Then, we have 
\[\Theta_{N,\psi_A}(m)=(1-q).\]
\end{theorem}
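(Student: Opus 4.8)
The plan is to run the same reduction as in Theorem~\ref{(1,6)}, but matching the data of $T(2,1)$ instead of $T(1,1)$. First I would record the linear-algebra invariants attached to $m=\begin{bmatrix}m_1&0\\0&m_2\end{bmatrix}$: for $m_1$ of Type-$2$ one has $\dim\Ker(m_1-1)=1$ and $\Image(m_1-1)=\Span\{e_1,e_2\}$, while for $m_2$ of Type-$6$ or Type-$9$ a direct inspection of $m_2-1$ (using $y_{22}+y_{33}=2$ and $y_{23}=-y_{32}^{-1}(y_{22}-1)^2$ in the Type-$9$ case) shows $\dim\Ker(m_2-1)=2$ with $e_1\in\Ker(m_2-1)$, since the first column of $m_2-1$ vanishes. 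Writing $W'=\Ker(m_2-1)$ and $h=\begin{bmatrix}m_1&X\\0&m_2\end{bmatrix}$, Remark~\ref{X} gives $XW'=\Span\{Xv_1,Xv_2\}$ for a basis $\{v_1,v_2\}$ of $W'$, and Proposition~\ref{dim(ker(h-1))} expresses $\dim\Ker(h-1)$ through $\dim\Ker(m_1-1)$, $\dim\Ker(m_2-1)$, $\dim(XW')$ and $\dim\big(XW'\cap\Image(m_1-1)\big)$.

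The key step is a change of variables in
\[\Theta_{N,\psi_A}(m)=\frac{1}{q^{9}}\sum_{X\in\M(3,F)}\Theta_\theta(h)\,\overline{\psi_A(m_1^{-1}X)}.\]
Because $e_1\in W'$, I can choose $g\in\GL(3,F)$ with $ge_1=e_1$ (equivalently $gA=A$) and $gW'=\Span\{e_1,e_2\}$; such $g$ exists since $W'$ and $\Span\{e_1,e_2\}$ are both $2$-dimensional planes through the fixed vector $e_1$. Substituting $X=Yg$ is a bijection of $\M(3,F)$, and cyclicity of the trace together with $gA=A$ yields $\tr(Am_1^{-1}Yg)=\tr(Am_1^{-1}Y)$ and $\tr(AYg)=\tr(AY)$, so both the constraint defining $S(\beta)$ and the character $\psi_A(m_1^{-1}\cdot)$ are preserved. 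Since $XW'=Y(gW')=\Span\{Y_1,Y_2\}$, Proposition~\ref{dim(ker(h-1))} shows that $\dim\Ker(h-1)$ equals $\dim\Ker(h'-1)$, where $h'=\begin{bmatrix}m_1&Y\\0&m_2'\end{bmatrix}$ and $m_2'$ is any Type-$1$ element of $M_2$ with $\Ker(m_2'-1)=\Span\{e_1,e_2\}$. As $\Theta_\theta$ on a unipotent element depends only on this kernel dimension by Theorem~\ref{character value of cuspidal representation (Dipendra)}, the two sums agree term-by-term, giving $\Theta_{N,\psi_A}(m)=\Theta_{N,\psi_A}(m')$ with $m'=(m_1,m_2')\in T(2,1)$.

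To finish I would invoke the transpose symmetry: by Theorem~\ref{T(i,j)=T(j,i)} there is an element of $T(1,2)$ at which $\Theta_{N,\psi_A}$ takes the same value as at $m'\in T(2,1)$, and by Theorem~\ref{Type(1,2)} that value is $(1-q)$. Chaining the equalities gives $\Theta_{N,\psi_A}(m)=(1-q)$, uniformly for $T(2,6)$ and $T(2,9)$, since only $\dim\Ker(m_2-1)=2$ and $e_1\in\Ker(m_2-1)$ were used.

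The hard part will be the bookkeeping around the substitution: one must ensure the single $g$ simultaneously fixes $A$ and carries $W'$ onto $\Span\{e_1,e_2\}$ — this is exactly what $e_1\in W'$ makes possible — and then confirm that the matched sum is literally the $T(2,1)$ sum and not merely one with the same kernel dimension on average. The only unavoidable explicit computation is checking $\dim\Ker(m_2-1)=2$ for the Type-$9$ matrix.
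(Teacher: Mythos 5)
Your proposal is correct and follows essentially the same route as the paper: observe that only $\dim\Ker(m_2-1)=2$ (together with the Type-2 data of $m_1$) enters Proposition~\ref{dim(ker(h-1))}, reduce to $T(2,1)$, then pass to $T(1,2)$ by Theorem~\ref{T(i,j)=T(j,i)} and quote the value $(1-q)$ from Theorem~\ref{Type(1,2)} --- exactly the chain $(2,6)\to(2,1)\to(1,2)$ the paper records. Your explicit change of variables $X=Yg$ with $ge_1=e_1$ (so $gA=A$) and $gW'=\Span\{e_1,e_2\}$ simply makes rigorous the step the paper compresses into ``the computations are similar to the case $m\in T(2,1)$'' via Remark~\ref{X}, and is a welcome tightening rather than a different argument.
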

 
\begin{proof}
Note that $\dim\Ker(m_1-1)=1$, $\Image(m_1-1)=\{e_1,e_2\}$, $\dim \Ker(m_2-1)=2$. From Remark \ref{X}, it follows that whenever \[h=\begin{bmatrix} m_1 & X\\
0 & m_2 \end{bmatrix},\] we have
\[XW'=\Span\{X_1,X_2\}.\]
Thus, the computations are similar to the case where $m \in T(2,1)$. The result follows from Theorem \ref{T(i,j)=T(j,i)} and Theorem \ref{Type(1,2)}.
\end{proof}

\begin{theorem} \label{(4,6)}
Let $m=\begin{bmatrix} m_1 & 0\\
0 & m_2 \end{bmatrix}\in $  $T(4,6)$ or $T(4,9)$. Then, we have 
\[\Theta_{N,\psi_A}(m)=(1-q)^2(1-q^2).\] 
\end{theorem}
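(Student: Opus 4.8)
The plan is to avoid building a fresh partition table and instead reduce the computation to the already-settled case $m'\in T(4,1)$ of Theorem~\ref{Type(4,1)} by a single change of variables. First I would record the relevant invariants. Since $m_1$ is of Type-$4$ we have $m_1=I$, so $\dim\Ker(m_1-1)=3$ and $\Image(m_1-1)=\{0\}$. For $m_2$ of Type-$6$ the matrix $m_2-1$ has only its first row nonzero, so $m_2$ is unipotent and $W'=\Ker(m_2-1)=\{v:y_{12}v_2+y_{13}v_3=0\}$ is a $2$-dimensional hyperplane containing $e_1$. For $m_2$ of Type-$9$ I first need to check that $m_2$ is genuinely unipotent: using $y_{22}+y_{33}=2$ and $y_{23}=-y_{32}^{-1}(y_{22}-1)^2$ one computes that the lower $2\times 2$ block has trace $2$ and determinant $1$, hence characteristic polynomial $(t-1)^2$, so $m_2$ is unipotent with characteristic polynomial $(t-1)^3$; moreover the constraint $y_{12}=-\gamma$ is exactly what forces the kernel vector of the lower block to also satisfy the first equation of $(m_2-1)v=0$, so again $\dim\Ker(m_2-1)=2$ and $e_1\in W'$. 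This verification is the one genuinely technical point.

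With $\Image(m_1-1)=\{0\}$, Proposition~\ref{dim(ker(h-1))} collapses to $\dim\Ker(h-1)=5-\dim(XW')$ for $h=\begin{bmatrix}m_1 & X\\0 & m_2\end{bmatrix}$, so by Theorem~\ref{character value of cuspidal representation (Dipendra)} the value $\Theta_\theta(h)$ depends on $X$ only through $r=\dim(XW')\in\{0,1,2\}$. Since $m_1=I$ we have $\psi_A(m_1^{-1}X)=\psi_0(\tr(AX))$ and $\tr(AX)=(Xe_1)_3$. Because $e_1\in W'$ and $\dim W'=2$, I can choose $g\in\GL(3,F)$ with $ge_1=e_1$ and $g\,\Span\{e_1,e_2\}=W'$. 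Writing $W_0=\Span\{e_1,e_2\}=\Ker(m_2''-1)$ for the Type-$1$ element $m_2''$ occurring in $T(4,1)$, the substitution $Y=Xg$ is a bijection of $\M(3,F)$ for which $XW'=YW_0$ (so $\dim(XW')=\dim(YW_0)$) and $(Xe_1)_3=(Ye_1)_3$ (so $\tr(AX)=\tr(AY)$, using $ge_1=e_1$).

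Consequently the defining sum $\frac{1}{q^9}\sum_{X}\Theta_\theta(h)\overline{\psi_0(\tr(AX))}$ for $m\in T(4,6)$ or $T(4,9)$ is, after this relabeling, term-by-term identical to the corresponding sum for $m'\in T(4,1)$, and Theorem~\ref{Type(4,1)} then yields $\Theta_{N,\psi_A}(m)=(1-q)^2(1-q^2)$. The main obstacle is purely the Type-$9$ bookkeeping in the first step; once $m_2$ is known to be unipotent with a $2$-dimensional kernel passing through $e_1$, the change of variables makes the Type-$6$ and Type-$9$ cases identical to $T(4,1)$ with no new table computation. I would also note that the same device retroactively justifies the phrases \emph{``proceeding in a similar fashion''} used in Theorems~\ref{(1,6)} and~\ref{(2,6)}, since there one takes the same $g$ with $ge_1=e_1$, which additionally preserves $\Image(m_1-1)=\Span\{e_1\}$ and hence the intersection term $\dim\{XW'\cap\Image(m_1-1)\}$.
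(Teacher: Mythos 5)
Your proposal is correct, and it arrives at the result by a route that is cleaner and more complete than the paper's own argument. The paper's proof of this theorem is three lines: it records the same invariants ($\dim\Ker(m_1-1)=3$, $\Image(m_1-1)=\{0\}$, $\dim\Ker(m_2-1)=2$), invokes Remark~\ref{X} to say that $XW'$ is the span of the first two columns in a basis adapted to $W'$, and then asserts that one may proceed ``in a similar fashion'' as in Theorem~\ref{Type(4,1)} --- that is, it implicitly re-runs the two partition tables of that theorem. You replace the re-run by a single substitution $Y=Xg$, with $g\in\GL(3,F)$ fixing $e_1$ and carrying $\Span\{e_1,e_2\}$ onto $W'$; combined with the collapse of Proposition~\ref{dim(ker(h-1))} to $\dim\Ker(h-1)=5-\dim(XW')$, so that by Theorem~\ref{character value of cuspidal representation (Dipendra)} the character term depends on $X$ only through $\dim(XW')$, this makes the defining sum for $m\in T(4,6)$ or $T(4,9)$ literally equal, term by term, to the sum already evaluated for $T(4,1)$, with no new counting. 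This is the same device the paper itself uses in Theorems~\ref{Type(x,5)} and~\ref{Type(x,10)}, where the two kernels coincide on the nose; your $g$-twist extends it to kernels that merely have the same dimension and both contain $e_1$. What the paper's phrasing hides, and what you correctly isolate, is that the reduction works only because $e_1\in W'$: otherwise no $g$ with $ge_1=e_1$ and $g\Span\{e_1,e_2\}=W'$ exists, and the twisting term $\psi_0(\tr(AX))=\psi_0((Xe_1)_3)$ would not survive the substitution. Your verification of the Type-$9$ facts (the lower block has trace $2$ and determinant $1$, and the constraint $y_{12}=-\gamma$ is what pushes the kernel up to dimension $2$ containing $e_1$) is a genuine addition that the paper nowhere supplies. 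One small correction to your closing remark about Theorems~\ref{(1,6)} and~\ref{(2,6)}: the intersection term $\dim\{XW'\cap\Image(m_1-1)\}$ is preserved automatically because $XW'=YW_0$ as subspaces of the target; $g$ acts on the right of $X$ and never needs to ``preserve'' $\Image(m_1-1)$ --- what matters there is again only $ge_1=e_1$, which preserves both trace conditions $\tr(AX)$ and $\tr(Am_1^{-1}X)$ defining $S(\beta)$.
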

 
\begin{proof} Note that $\dim\Ker(m_1-1)=3$, $\Image(m_1-1)=\{0\}$, $\dim \Ker(m_2-1)=2$. From Remark \ref{X}, it follows that whenever \[h=\begin{bmatrix} m_1 & X\\
0 & m_2 \end{bmatrix},\] we have
\[XW'=\Span\{X_1,X_2\}.\]
Thus, proceeding in a similar fashion as in Theorem \ref{Type(4,1)}, we get that 
\[\Theta_{N,\psi_A}(m)=(1-q)^2(1-q^2).\]
\end{proof}

\begin{theorem} \label{(6,6)}
Let $m=\begin{bmatrix} m_1 & 0\\
0 & m_2 \end{bmatrix} \in T(6,6)$ or $T(6,9)$. Then, we have 
\[\Theta_{N,\psi_A}(m)=(1-q)^2.\] 
\end{theorem}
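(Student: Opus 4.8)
The plan is to avoid a fresh case-by-case count and instead reduce $T(6,6)$ and $T(6,9)$ to types whose character values are already recorded, by conjugating inside $M_{\psi_A}$. First I would fix the relevant invariants. For $m_1$ of Type-6 one has $m_1=I+a_{13}E_{13}+a_{23}E_{23}$, so $m_1$ is unipotent with $\dim\Ker(m_1-1)=2$ and $\Image(m_1-1)$ a line. For $m_2$ of Type-6, $m_2=I+y_{12}E_{12}+y_{13}E_{13}$ is unipotent with $\dim\Ker(m_2-1)=2$ and $\Image(m_2-1)=\Span\{e_1\}$. For $m_2$ of Type-9 I would first check unipotence: its lower-right $2\times 2$ block has trace $2$ and, using the defining relation $y_{23}=-y_{32}^{-1}(y_{22}-1)^2$, determinant $1$, hence is a single unipotent Jordan block; together with the eigenvalue $1$ on $e_1$ this makes $m_2$ unipotent with $\dim\Ker(m_2-1)=2$. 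Thus in every case $h=\begin{bmatrix} m_1 & X \\ 0 & m_2 \end{bmatrix}$ is unipotent, so Theorem~\ref{character value of cuspidal representation (Dipendra)} together with Proposition~\ref{dim(ker(h-1))} controls $\Theta_\theta(h)$ through $\dim\Ker(h-1)$.

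The key step is a change of the second factor. Since $\Theta_{N,\psi_A}$ is a class function on $M_{\psi_A}$, and since Lemma~\ref{elements in M psi A} shows that $\begin{bmatrix} 1 & 0 \\ 0 & g_2 \end{bmatrix}$ lies in $M_{\psi_A}$ exactly when $g_2 e_1=e_1$, I would conjugate $m_2$ by a suitable $e_1$-fixing $g_2$, which fixes $m_1$ and keeps the element in $H_A$. For $m\in T(6,6)$ the nilpotent $N=m_2-1$ satisfies $\Image N=\Span\{e_1\}$ and $\Ker N\ni e_1$, so choosing $g_2$ with $g_2 e_1=e_1$ and $g_2(\Ker N)=\Span\{e_1,e_2\}$ sends $m_2$ to a Type-5 element; hence $\Theta_{N,\psi_A}(m)=\Theta_{N,\psi_A}(m')$ for some $m'\in T(6,5)$. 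For $m\in T(6,9)$ one computes $\Image N=\Span\{y_{12}e_1+(ue_2+y_{32}e_3)\}$ and $\Ker N=\Span\{e_1,\,ue_2+y_{32}e_3\}$ with $u=y_{22}-1$; choosing $g_2$ with $g_2 e_1=e_1$ and $g_2(ue_2+y_{32}e_3)=e_2$ sends $m_2$ to a Type-1 element, so $\Theta_{N,\psi_A}(m)=\Theta_{N,\psi_A}(m')$ for some $m'\in T(6,1)$.

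To finish I would chain the results already available. Theorem~\ref{Type(x,5)} gives that $\Theta_{N,\psi_A}$ on $T(6,5)$ equals its value on $T(6,1)$; the symmetry in Theorem~\ref{T(i,j)=T(j,i)} gives that the value on $T(6,1)$ equals the value on $T(1,6)$; and Theorem~\ref{(1,6)} evaluates the latter as $(1-q)^2$. Composing these equalities yields $\Theta_{N,\psi_A}(m)=(1-q)^2$ for $m\in T(6,6)\cup T(6,9)$, as asserted.

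The main obstacle is the explicit construction of the conjugators $g_2$, together with the preliminary unipotence check in the Type-9 case: both reduce to reading off the image line and kernel plane of $N=m_2-1$ and verifying that the conjugate lands in precisely the intended type (Type-5 from Type-6, Type-1 from Type-9) while $\begin{bmatrix} 1 & 0 \\ 0 & g_2 \end{bmatrix}$ stays in $M_{\psi_A}$. Should these conjugators prove awkward to exhibit uniformly, the fallback is a direct computation in the spirit of Theorem~\ref{Type(1,1)}: partition $S(0)$ and each $S(\beta)$ according to $t=\dim\Ker(h-1)=4-\dim(XW')+\dim(XW'\cap\Image(m_1-1))$, using $XW'=\Span\{X_1,X_2\}$ from Remark~\ref{X}, and sum. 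That bookkeeping is heavier because $\Image(m_1-1)$ and $W'=\Ker(m_2-1)$ are now in general position rather than being coordinate subspaces, but the dimensional invariants agree with those of $T(1,1)$, so the weighted cardinalities collapse to the same total $(1-q)^2$.
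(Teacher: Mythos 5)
Your proof is correct, and its endgame coincides with the paper's: both arguments reduce to $T(6,1)$, pass to $T(1,6)$ via Theorem~\ref{T(i,j)=T(j,i)}, and quote Theorem~\ref{(1,6)}. Where you differ is the reduction mechanism. The paper handles $T(6,6)$ and $T(6,9)$ in one stroke by recording $\dim\Ker(m_1-1)=\dim\Ker(m_2-1)=2$ and appealing to Remark~\ref{X} to assert that the computation "is the same" as for $T(6,1)$ --- a matching-of-dimensional-invariants argument left informal. You instead conjugate inside $M_{\psi_A}$: since $\begin{bmatrix}1&0\\0&g_2\end{bmatrix}\in M_{\psi_A}$ precisely when $g_2e_1=e_1$ (your reading of Lemma~\ref{elements in M psi A} is right), and since $\Theta_{N,\psi_A}$ is a class function on $M_{\psi_A}$, an $e_1$-fixing $g_2$ carrying $\Ker(m_2-1)$ to standard position moves a Type-6 block $m_2$ to Type-5 and a Type-9 block to Type-1, after which Theorem~\ref{Type(x,5)} bridges $T(6,5)$ to $T(6,1)$. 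This is exactly the mechanism the paper itself uses in Propositions~\ref{Type(x,3)}, \ref{(x,7)} and \ref{(x,8)} (there with the specific permutation $w$), so your route is in the paper's spirit; its advantage is that it replaces the informal "same computation" claim with an exact equality of character values, at the modest cost of exhibiting the conjugators and checking unipotence of Type-9, and your trace/determinant verification of the latter (using $y_{23}=-y_{32}^{-1}(y_{22}-1)^2$ and the implicit $y_{22}+y_{33}=2$) is correct. One structural point you got right that is easy to miss: conjugation by $e_1$-fixing elements preserves the line $\Span\{e_1\}$, so Type-6, whose nilpotent part has image $\Span\{e_1\}$, cannot be conjugated directly to Type-1 and must route through Type-5; your chain respects this, whereas a naive "conjugate everything to Type-1" plan would fail.
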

 
\begin{proof} Note that $\dim\Ker(m_1-1)=2$, $\dim\Ker(m_2-1)=2$, $\Image(m_1-1)=\Span\{\eta e_1+e_2\}$ 
for $\eta \in F^{\times}$. From Remark \ref{X}, it follows that computing $\Theta_{N,\psi_A}(m)$ for $m \in T(6,6)$ or $m\in T(6,9)$ is the same as computing $\Theta_{N,\psi_A}(m')$ for $m' \in T(6,1)$. Using Theorem \ref{T(i,j)=T(j,i)} and Theorem \ref{(1,6)}, it follows that 
\[\Theta_{N,\psi_A}(m)=(1-q)^2.\]
\end{proof}

\begin{theorem}
Let $m=\begin{bmatrix} m_1 & 0\\
0 & m_2 \end{bmatrix} \in T(9,9)$.
Then, we have 
\[\Theta_{N,\psi_A}(m)=(1-q)^2.\] 
\end{theorem}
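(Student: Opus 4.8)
The plan is to reduce the computation to the already–settled $T(6,6)$ case by isolating the only linear–algebraic data on which the character sum depends. First I would record the shape of the relevant unipotent elements. Writing $m_1 = \begin{bmatrix} a_{11} & a_{12} & a_{13} \\ a_{21} & a_{22} & a_{23} \\ 0 & 0 & 1\end{bmatrix}$ with $a_{21}\in F^{\times}$, $a_{11}+a_{22}=2$, $a_{12} = -a_{21}^{-1}(a_{11}-1)^2$, $a_{13}=\delta$ and $a_{11}\neq 1$ (the conditions defining Type-$9$, coming from Lemma~\ref{lemma 3 in rho1 character calculation}), one checks that the top–left $2\times 2$ block has trace $2$ and determinant $1$, so $m_1$ is unipotent; moreover the first row of $m_1-1$ is the scalar multiple $(a_{11}-1)a_{21}^{-1}$ of the second, so $\Rank(m_1-1)=1$, whence $\dim\Ker(m_1-1)=2$ and $\Image(m_1-1)=\Span\{\eta e_1 + e_2\}$ with $\eta = (a_{11}-1)a_{21}^{-1}\in F^{\times}$. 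The symmetric computation for the Type-$9$ element $m_2\in M_2$ shows that $m_2$ is unipotent with $\dim\Ker(m_2-1)=2$ and $W' := \Ker(m_2-1) = \Span\{e_1,\, \kappa e_2 + e_3\}$ for some $\kappa\in F^{\times}$. In particular $h = \begin{bmatrix} m_1 & X \\ 0 & m_2\end{bmatrix}$ is unipotent for every $X\in\M(3,F)$.

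Next I would observe that these are exactly the invariants governing the $T(6,6)$ case of Theorem~\ref{(6,6)}. Since $h$ is unipotent, Theorem~\ref{character value of cuspidal representation (Dipendra)} shows that $\Theta_{\theta}(h)$ depends on $X$ only through $t=\dim\Ker(h-1)$, and by Proposition~\ref{dim(ker(h-1))} together with Remark~\ref{X} we have $t = 4 - \dim(XW') + \dim\{XW'\cap\Image(m_1-1)\}$, where $XW' = \Span\{X_1,\, \kappa X_2 + X_3\}$ and $X_1,X_2,X_3$ are the columns of $X$. Moreover, because the last row of $m_1$ is $(0,0,1)$, the last row of $m_1^{-1}$ is also $(0,0,1)$, so $\tr(Am_1^{-1}X)=\tr(AX)=X_{31}$; hence the weight $\overline{\psi_A(m_1^{-1}X)}$ equals $\overline{\psi_0(X_{31})}$ and is independent of $m_1$. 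Thus the entire character sum depends on $(m_1,m_2)$ only through the plane $W'=\Span\{e_1,\kappa e_2+e_3\}$ and the line $\Image(m_1-1)=\Span\{\eta e_1+e_2\}$.

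Finally I would remove the dependence on $\kappa$ by the invertible column substitution $Y = XP$ with $P = \begin{bmatrix}1&0&0\\0&\kappa&0\\0&1&1\end{bmatrix}$. This is a bijection of $\M(3,F)$ satisfying $Y\Span\{e_1,e_2\} = \Span\{X_1,\kappa X_2+X_3\}=XW'$, and, since the first column of $P$ is $e_1$, it fixes the entry $Y_{31}=X_{31}$. Consequently it preserves both the weight $\psi_0(X_{31})$ and every stratum $\{\dim(XW')=j,\ \dim\{XW'\cap\Image(m_1-1)\}=l\}$, and therefore identifies the character sum for $m\in T(9,9)$ with the corresponding sum in which $W'$ is replaced by the standard plane $\Span\{e_1,e_2\}$ while $\Image(m_1-1)=\Span\{\eta e_1+e_2\}$ is unchanged. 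This is precisely the $T(6,1)$ configuration, so by Theorem~\ref{T(i,j)=T(j,i)} and Theorem~\ref{(1,6)} (equivalently, because the sum is identical to the one evaluated in Theorem~\ref{(6,6)}) we conclude $\Theta_{N,\psi_A}(m) = (1-q)^2$.

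The main obstacle is not a hard estimate but making the appeal to Remark~\ref{X} rigorous: one must verify that the reduction of $W'$ to the standard plane is implemented by a column operation fixing the $(3,1)$–entry, so that the weight $\psi_0(X_{31})$ and the incidence data $\dim(XW')$ and $\dim\{XW'\cap\Image(m_1-1)\}$ are preserved simultaneously. Once this bookkeeping is in place the value is inherited from the previously computed cases; alternatively one could bypass the reduction and assemble the partition tables for $S(0)$ and $S(\beta)$ directly as in Theorem~\ref{Type(1,1)}, which is longer but entirely routine.
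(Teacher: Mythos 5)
Your proposal is correct and takes essentially the same route as the paper: the paper settles $T(9,9)$ by declaring it ``similar to Theorem~\ref{(6,6)}'', whose proof likewise records that $\dim\Ker(m_1-1)=\dim\Ker(m_2-1)=2$ and $\Image(m_1-1)=\Span\{\eta e_1+e_2\}$, invokes Remark~\ref{X} to identify the computation with that for $T(6,1)$, and then cites Theorem~\ref{T(i,j)=T(j,i)} and Theorem~\ref{(1,6)}. Your explicit column substitution $Y=XP$ fixing the $(3,1)$-entry is exactly the bookkeeping the paper leaves implicit in its appeal to Remark~\ref{X}, so you have given a more carefully justified version of the same argument rather than a different one.
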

 
\begin{proof}
The proof is similar to Theorem \ref{(6,6)}.
\end{proof}

\begin{theorem} \label{Type(x,10)}
Let $1 \leq i \leq 10$. Suppose $m=\begin{bmatrix} m_1 & 0 \\
0 & m_2 \end{bmatrix} \in T(i,10)$ and $m'=\begin{bmatrix} m_1 & 0\\
0 & m_2' \end{bmatrix} \in T(i,2)$. Then, we have 
\[\Theta_{N,\psi_A}(m)=\Theta_{N,\psi_A}(m').\]
\end{theorem}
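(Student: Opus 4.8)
The plan is to run the argument of Theorem~\ref{Type(x,5)} essentially verbatim, the only genuinely new ingredient being a structural fact about Type-10 elements of $M_2$. Recall what made that earlier theorem work: once one knows that $\Ker(m_2-1)=\Ker(m_2'-1)$ \emph{as subspaces} and that $\dim\Ker(m_2-1)=\dim\Ker(m_2'-1)$, then, because $m_1$ is literally the same matrix in $m$ and $m'$, Proposition~\ref{dim(ker(h-1))} forces $\dim\Ker(h-1)=\dim\Ker(h'-1)$ for every $X\in\M(3,F)$, where $h=\begin{bmatrix} m_1 & X\\ 0 & m_2\end{bmatrix}$ and $h'=\begin{bmatrix} m_1 & X\\ 0 & m_2'\end{bmatrix}$. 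Consequently the partition sets satisfy $M_{m_1,m_2}^{d,\beta}=M_{m_1,m_2'}^{d,\beta}$ for every $d$ and every $\beta\in F$, and the two character values become identical sums. So the whole theorem reduces to checking the hypotheses $W':=\Ker(m_2-1)=\Ker(m_2'-1)$ and $\dim W'=\dim\Ker(m_2'-1)$ when $m_2$ is of Type-10 and $m_2'$ is of Type-2.

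First I would verify that a Type-10 element $m_2$ is unipotent, so that Proposition~\ref{dim(ker(h-1))} and the character formula of Theorem~\ref{character value of cuspidal representation (Dipendra)} apply. Its first column is $e_1$, so $1$ is an eigenvalue; the lower $2\times 2$ block $\begin{bmatrix} y_{22} & y_{23}\\ y_{32} & y_{33}\end{bmatrix}$ has trace $y_{22}+y_{33}=2$ and, using $y_{23}=-y_{32}^{-1}(y_{22}-1)^2$ with $y_{33}=2-y_{22}$, determinant $1$, hence characteristic polynomial $(\lambda-1)^2$; thus $m_2$ has characteristic polynomial $(\lambda-1)^3$ and is unipotent (a Type-2 element is patently unipotent, and $m_1$ of any Type $i$ is unipotent, exactly as in Theorem~\ref{Type(x,5)}). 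Next I would pin down the kernels. For a Type-2 element $\begin{bmatrix} 1 & y_{12} & y_{13}\\ 0 & 1 & y_{23}\\ 0 & 0 & 1\end{bmatrix}$ with $y_{12},y_{23}\in F^{\times}$, solving $(m_2'-1)v=0$ forces $v_3=0$ and then $v_2=0$, so $\Ker(m_2'-1)=\Span\{e_1\}$. For a Type-10 element the first column of $m_2-1$ is zero, so $e_1\in\Ker(m_2-1)$; meanwhile rows two and three of $m_2-1$ are proportional, and the defining inequality $y_{12}\neq-\gamma$ (with $\gamma=y_{32}y_{13}(y_{22}-1)^{-1}$) is precisely the statement that the top row of $m_2-1$ is \emph{not} in the span of the lower two, giving $\Rank(m_2-1)=2$ and hence $\dim\Ker(m_2-1)=1$. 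Therefore $\Ker(m_2-1)=\Span\{e_1\}=\Ker(m_2'-1)$, and both kernels have dimension $1$.

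With these two facts the conclusion is immediate: since $m_1$ is unchanged, $\Ker(m_1-1)$ and $\Image(m_1-1)$ are the same for $h$ and $h'$, and since $W'=\Span\{e_1\}$ is the same subspace, Proposition~\ref{dim(ker(h-1))} yields $\dim\Ker(h-1)=\dim\Ker(h'-1)$ for all $X$, whence $M_{m_1,m_2}^{d,\beta}=M_{m_1,m_2'}^{d,\beta}$. Because $h$ and $h'$ are unipotent, $\Theta_{\theta}$ depends on $X$ only through $t=\dim\Ker(h-1)$ via Theorem~\ref{character value of cuspidal representation (Dipendra)}, while the twisting weight $\overline{\psi_A(m_1^{-1}X)}=\overline{\psi_0(\beta)}$ on $S(\beta)$ depends only on $m_1$ and $\beta$; summing over the common sets exactly as in Theorem~\ref{Type(x,5)} gives $\Theta_{N,\psi_A}(m)=\Theta_{N,\psi_A}(m')$. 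The one substantive step, and the main obstacle, is the kernel computation for Type-10: one must confirm that the Type-9/Type-10 dividing condition $y_{12}\neq-\gamma$ is exactly what separates $\dim\Ker(m_2-1)=1$ from $\dim\Ker(m_2-1)=2$, so that Type-10 (not Type-9) is the one matching Type-2.
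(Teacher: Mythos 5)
Your proposal is correct and follows essentially the same route as the paper's proof: reduce everything to the subspace equality $\Ker(m_2-1)=\Ker(m_2'-1)$, deduce from Proposition~\ref{dim(ker(h-1))} that $M_{m_{1},m_{2}}^{d,\beta}=M_{m_{1},m_{2}'}^{d,\beta}$ for all $d$ and $\beta$, and conclude by summing, since for unipotent $h$ the value $\Theta_{\theta}(h)$ depends only on $\dim\Ker(h-1)$ while the twisting factor on $S(\beta)$ is $\overline{\psi_{0}(\beta)}$. The only difference is that you supply the verification (unipotence of Type-10 elements and the rank argument showing $\Ker(m_2-1)=\Span\{e_1\}$, with $y_{12}\neq-\gamma$ being exactly the rank-2 condition) of the kernel equality that the paper asserts as ``clear.''
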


\begin{proof}
Let $h=\begin{bmatrix} m_1 & X \\
0 & m_2 \end{bmatrix}$ and $h'=\begin{bmatrix} m_1 & X \\
0 &m_2' \end{bmatrix}$ for $X \in \M(3,F)$. Let \[M_{m_{1},m_{2}}^{ d,\beta}=\{X \in S(\beta) \mid \dim(\Ker(h-1))=d\}\] for $\beta \in F$.
Clearly,
\[\Ker(m_2-1)=\Ker(m_2'-1).\]
Hence for any $X \in \M(3,F)$, 
\[X\Ker(m_2-1)=X\Ker(m_2'-1)\]
and \[X\Ker(m_2-1) \cap \Image(m_1-1)=X\Ker(m_2'-1) \cap \Image(m_1-1).\]
In particular, for any $\beta \in F$, we have that 
\begin{equation}\label{M_d^h}M_{m_{1},m_{2}}^{ d,\beta}=M_{m_{1},m_{2}'}^{ d,\beta}.
\end{equation}
Therefore, 
\[\Theta_{N,\psi_A}(m)=\frac{1}{q^9}(R_1+R_2)\]
where
\begin{align*} R_1&=\sum_{d=1}^{6}\sum_{ X \in M_{m_{1},m_{2}}^{ d,0}}\Theta_{\theta}(h)\overline{\psi_A({m_1}^{-1}X)}\\
&= \sum_{d=1}^{6}(-1)^{6-1}(1-q) \cdots (1-q^{d-1})(\# M_{m_{1},m_{2}}^{ d,0})\overline{\psi_0(0)} \end{align*}
and \begin{align*} R_2 &=\sum_{\beta \in F^{\times}}\sum_{d=1}^{6}\sum_{X \in M_{m_{1},m_{2}}^{ d,\beta}}\Theta_{\theta}(h)\overline{\psi_A({m_1}^{-1}X)}\\
 &=\sum_{\beta \in F^{\times}} \sum_{d=1}^{6}(-1)^{6-1}(1-q) \cdots (1-q^{d-1})(\# M_{m_{1},m_{2}}^{ d,\beta}\overline{\psi_0(\beta)}) \end{align*}
Thus,  \[\Theta_{N,\psi_A}(m)=\frac{1}{q^9}\sum_{d=1}^{6}(-1)^{6-1}(1-q) \cdots (1-q^{d-1})(\# M_{m_{1},m_{2}}^{ d,0}+ \sum_{\beta \in F^{\times}}(\# M_{m_{1},m_{2}}^{ d,\beta})\overline{\psi_0(\beta})).\]
Similarly, \[\Theta_{N,\psi_A}(m')=\frac{1}{q^9}\sum_{d=1}^{6}(-1)^{6-1}(1-q) \cdots (1-q^{d-1})(\# M_{m_{1},m_{2}'}^{ d,0}+ \sum_{\beta \in F^{\times}}\# (M_{m_{1},m_{2}'}^{ d,\beta})\overline{\psi_0(\beta})).\]
Hence, it follows from equation \ref{M_d^h} that
\[\Theta_{N,\psi_A}(m)=\Theta_{N,\psi_A}(m').\]
\end{proof}

\begin{remark} Let $1 \leq i \leq j \leq 10$. To determine $\Theta_{N,\psi_A}(m)$ for $m \in T(i,j)$, it is enough to compute $\Theta_{N,\psi_A}(m)$ for $m \in E$. We illustrate this by an example.\\

Suppose that we want to compute the character value $\Theta_{N,\psi_A}(m)$ for $m \in T(3,7)$. From Proposition \ref{(x,7)}, it follows that $\Theta_{N,\psi_A}(m)=\Theta_{N,\psi_A}(k)$ for some $k \in T(3,1)$. By Theorem \ref{T(i,j)=T(j,i)}, we have $\Theta_{N,\psi_A}(k)=\Theta_{N,\psi_A}(x)$ for some 
$x\in T(1,3)$. Using Theorem \ref{Type(x,3)}, we have,
$\Theta_{N,\psi_A}(x)=\Theta_{N,\psi_A}(y)$
for some $y \in T(1,1)$. Thus, using Theorem \ref{Type(1,1)} we have 
\[\Theta_{N,\psi_A}(m)=(1-q)^2.\]
For clarity, we represent the chain of computations used to determine the character value of an element in $T(i,j)$ to the character value of an element in the fundamental set $E$ in the following way.
\[(3,7) \rightarrow (3,1) \rightarrow (1,3) \rightarrow (1,1).\]
\end{remark}

The table below summarizes the sequence of computations used to calculate $\Theta_{N,\psi_{A}}(m)$ for $m\in T(i,j)$, $i\leq j$.

\begin{landscape}
\captionof{table}{Sequence of computations for $\Theta_{N,\psi_A}(m)$}
\label{Character of pi(N,psiA)}
\scriptsize
\resizebox{2\textwidth}{!}{
    \begin{tabular}{| c | c | c | c | c | c | c | c | c | c | c | }
    \hline
    & &    & & & & & & & & \\
    & Type-1 & Type-2 & Type-3 & Type-4 & Type-5 & Type-6 & Type-7 & Type-8 & Type-9 & Type-10\\
   \hline 
   
   & & & & & & & & & &\\
   Type-1& $(1,1)$ & $(1,2)$ & $(1,3) \rightarrow(1,1)$ & $(1,4)$ & $(1,5) \to (1,1)$ & $(1,6) \rightarrow (1,1)$ & $(1,7) \rightarrow (1,1)$ & $(1,8) \rightarrow (1,2)$ & $(1,9) \rightarrow (1,1)$ & $(1,10) \rightarrow (1,2)$\\
      & & & & & & & & & &\\
    \hline
    
       & & & & & & & & & &\\
       Type-2 & - & $(2,2)$ & $(2,3) \rightarrow (2,1) \rightarrow (1,2)$ & $(2,4)$ & $(2,5) \rightarrow (2,1) \rightarrow (1,2)$ & $(2,6) \rightarrow (2,1) \rightarrow (1,2)$ & $(2,7) \rightarrow (2,1) \rightarrow (1,2)$ & $(2,8) \to (2,2)$ & $(2,9) \to (2,1) \to (1,2)$ &$(2,10) \to (2,2)$\\
          & & & & & & & & & &\\
\hline

   & & & & & & & & & &\\
    Type-3& - & - & $(3,3) \to (3,1) \to (1,3)\to (1,1)$ & $(4,3) \to (4,1) \to (1,4)$ & $(3,5) \to (3,1) \to (1,3) \to (1,1)$ & $(3,6) \to (6,3) \to (6,1) \to (1,6) \to (1,1)$ & $(3,7) \to (3,1) \to (1,3) \to (1,1)$ & $(3,8) \to (3,2) \to (2,3) \to (2,1) \to (1,2)$ & $(3,9) \to (9,3) \to (9,1) \to (1,9) \to (1,1)$ & $(3,10) \to (3,2) \to (2,3) \to (2,1) \to (1,2)$ \\
       & & & & & & & & & &\\
\hline

   & & & & & & & & & &\\
Type-4& - & - & - & $(4,4)$ & $(4,5) \to (4,1) \to (1,4)$ & $(4,6) \to (4,1) \to (1,4)$ & $(4,7) \to (4,1) \to (1,4)$ & $(4,8) \to (4,2) \to (2,4)$ & $(4,9) \to (4,1) \to (1,4)$ & $(4,10) \to (4,2) \to (2,4) $\\
   & & & & & & & & & &\\
\hline

& & & & & & & & & &\\
   Type-5& - & - & - & - & $(5,5) \to (5,1) \to (1,5) \to (1,1)$ & $(5,6) \to (6,5) \to (6,1) \to (1,6) \to (1,1)$ & $(5,7) \to (5,1) \to (1,5) \to (1,1)$ & $(5,8) \to (5,2) \to (2,5) \to (2,1) \to (1,2)$ & $(5,9) \to (9,5) \to (9,1) \to (1,9) \to (1,1)$ & $(5,10) \to (5,2) \to (2,5) \to (2,1) \to (1,2)$\\
      & & & & & & & & & &\\
    \hline

& & & & & & & & & &\\
   Type-6 & - & - & - & - &- & $(6,6) \to (6,1) \to (1,6) \to (1,1)$ & $(6,7) \to (6,1) \to (1,6) \to (1,1)$ & $(6,8) \to (6,2) \to (2,6) \to (2,1) \to (1,2)$ & $(6,9) \to (6,1) \to (1,6) \to (1,1)$ & $(6,10) \to (6,2) \to (2,6) \to (2,1) \to (1,2)$\\
      & & & & & & & & & &\\
    \hline
    
    & & & & & & & & & &\\
   Type-7& - & - & - & - & - & - & $(7,7) \to (7,1) \to (1,7) \to (1,1)$ & $(7,8) \to (7,2) \to (2,7) \to (2,1) \to (1,2)$ & $(7,9) \to (9,7) \to (9,1) \to (1,9) \to (1,1)$ & $(7,10) \to (7,2) \to (2,7) \to (2,1) \to (1,2)$\\
      & & & & & & & & & &\\
    \hline
       
       & & & & & & & & & &\\
       Type-8 & - & - & - & - &- &- & - & $(8,8) \to (8,2) \to (2,8) \to (2,2)$ & $(8,9) \to (9,8) \to (9,2) \to (2,9) \to (2,1) \to (1,2)$ &$(8,10) \to (8,2) \to (2,8) \to (2,2)$\\
          & & & & & & & & & &\\
\hline

 & & & & & & & & & &\\
   Type-9& - & - & - & - & - & - & - & - & $(9,9) \to (9,1) \to (1,9) \to (1,1)$ & $(9,10) \to (9,2) \to (2,9) \to (2,1) \to (1,2)$\\
      & & & & & & & & & &\\
    \hline
    
 & & & & & & & & & &\\
       Type-10 & - & - & - & - & - & - & - & - & - & $(10,10) \to (10,2) \to (2,10) \to (2,2)$\\
          & & & & & & & & & &\\
\hline
    \end{tabular} 
    }
    \end{landscape}




\section*{Acknowledgements}

We thank Professor Dipendra Prasad for suggesting this problem and for some helpful discussions. Research of Kumar Balasubramanian is supported by the SERB grant: MTR/2019/000358.

\bibliographystyle{amsplain}
\bibliography{Twisted-Jacquet-Module}
\end{document}